\theoremstyle{plain}
\newtheorem{theorem}{Theorem}[section]
\newtheorem{lemma}[theorem]{Lemma}
\newtheorem{proposition}[theorem]{Proposition}
\newtheorem{definition}[theorem]{Definition}
\newtheorem{rmk}[theorem]{Remark}
\newtheorem{example}[theorem]{Example}
\newtheorem{openprob}[theorem]{Open Problem}
\newtheorem{question}[theorem]{Question}
\theoremstyle{remark}
\mathchardef\emptyset="001F
\numberwithin{equation}{section}
\newcommand{\op}[1]{{\rm{#1}}}
\title[Notes on a slice distance for singular $L^p$-bundles]{Notes on a slice distance for singular $L^p$-bundles}
\author[Mircea Petrache]{Mircea Petrache}
\begin{document}

\begin{abstract}
A slice distance for the class of weak abelian $L^p$-bundles in $3$ dimensions was introduced in \cite{PR1}, where it was used to prove the closure of such class of bundles for the weak $L^p$-convergence. We further investigate this distance here, and we prove more properties of it, for example we show that it is H\"older-continuous on the slices. Using the same distance, we give here a notion of a boundary trace, giving a suitable setting for minimization problems on weak bundles. We then state some conjectures and some open questions.
\end{abstract}
 \maketitle
\tableofcontents
\section{Introduction}
In the last decades, an increasing interest has arisen towards analytic tools able to create singular bundles for interesting energies, starting with Donaldson's breakthrough linking $4$-dimensional topology to the study of $SU(2)$-bundles which are critical points of the $L^2$-norm of the curvature studied by Uhlenbeck \cite{Uhl1}. This study is especially interesting in the case of supercritical dimensions, where it gave rise to new questions, among which is the one regarding the possibility of constructing new nontrivial bundles satisfying the Yang-Mills equations by minimizing the natural energy under some topological constraint; see \cite{tian}, \cite{DoTh} and the references therein. For nonabelian bundles in supercritical dimensions, a setting in which the direct method of the Calculus of variations can be applied to obtain critical bundles with singularities is not yet well established (see however the definitions in \cite{isobe2}, \cite{rivkess}). 

\subsection{Defining a Plateau problem for $U(1)$-bundles}
Together with Tristan Rivi\`ere, we started \cite{PR1} an approach which should give the \emph{suitable setting for posing a minimization problem for weak bundles in supercritical dimensions}, in the abelian case of complex $U(1)$-line bundles. Since the Yang-Mills energy in the nonabelian case is the $L^2$-norm of the curvature, we considered the energies defined by $L^p$-norms also in the abelian case. Under such constraint a suitable setting should consist of the following ingredients:
\begin{itemize}
 \item \textit{A class of weak bundles which is closed by sequential weak-$L^p$ convergence of the curvatures:} since as we said the natural energy is the $L^p$-norm of the curvature, the topology giving precompactness of sublevelsets is the weak $L^p$-topology. In particular any minimizing sequence will have a weakly convergent subsequence, and a suitable class of bundles should be closed under this topology.
\item \textit{A suitable notion of boundary value:} if $F$ is the curvature of our bundle, we want to be able to state the minimization problem which could be formally written as follows
\begin{equation}\label{minpb}
\inf\left\{\int_\Omega |F|^pdx: F|_{\partial \Omega}=\phi\right\}
\end{equation}
in a meaningful way. In particular, we would like the weak convergence in the previous point not to disrupt our boundary condition, and to reduce to the usual boundary restriction for locally smooth bundles.
\end{itemize}
As we briefly describe in Sections \ref{closureintro} and \ref{closuresec}, the first point above was solved by the main theorem of \cite{PR1}. The solution of the second point is one of the main results of the present work (See Section \ref{defbdry})
\begin{rmk}
Another possible approach for the creation of nontrivial bundles which are critical for our energy is by minimizing a \emph{relaxed energy} instead, as suggested in \cite{rivkess} and \cite{isoberel}, and in analogy with the case of harmonic maps  \cite{BBC}. In our case a good candidate for such energy would for example be given by
$$
E(F)=\int_\Omega |F|^pdx^3+\sup_{||d\xi||_{L^\infty}\leq 1}\int_\Omega F\wedge d\xi.
$$
\end{rmk}

\subsection{Weak $U(1)$-bundles and vectorfields with integer fluxes}\label{closureintro}
Consider the aim of making a minimization problem as in \eqref{minpb} rigorous for supercritical $U(1)$-bundles. The natural topological invariant of $U(1)$-bundles is the first Chern class, $c_1$, which, for an $U(1)$-bundle $P$ over a compact surface $\Sigma$ is expressible via Chern-Weil theory as
$$
c_1(P)=\int_\Sigma F\in2\pi\mathbb Z\equiv H^2(\Sigma,\mathbb Z),
$$
where $F$ is a curvature on $P$ (see\cite{Zhang}). By identifying the Lie algebra $u(1)$ with $\mathbb R$, we can identify $F$ with a $\mathbb R$-valued $2$-form on $\Sigma$. Then, in the ``supercritical'' dimension $3$, a $2$-form can be interpreted as a curvature if it gives integer volume to almost every closed surface (that integer corresponds to the $c_1$ of a line bundle restricted to the surface). In \cite{rivkess} the following class was first defined:
\begin{definition}\label{defnFZabel}
 We call an $L^p$-integrable $2$-form $F$ defined on a $3$-dimensional domain $\Omega$ a \textbf{curvature of a weak line bundle with group $U(1)$}, if for all $x\in\Omega$ and for almost all $r>0$ such that $B(x,r)\subset\Omega$, there holds
$$
\int_{\partial B(x,r)}i^*F\in\mathbb Z,
$$
where $i:\partial B(x,r)\to\mathbb R^3$ is the inclusion map. We call $\mathcal F_{\mathbb Z}^p(\Omega)$ the class of such $F$.
\end{definition}
\begin{rmk}[vectorfields with integer fluxes]\label{vfif}
 We could associate to a $2$-form $F$ the vectorfield $X$ satisfying
$$
F_p(U,V)=X_p\cdot (U\times V)\quad\text{for all }U,V\in\mathbb R^3,
$$
so that $i_{\partial\Omega}^*F=X\cdot\nu_{\Omega}\op{Vol}_{\partial\Omega}$, $\nu_{\Omega}$ being the outer normal to $\Omega$ and $\op{Vol}_{\partial\Omega}$ being the oriented unit volume $2$-form on $\partial\Omega$. Via this correspondence, we can identify curvatures of $L^p$ weak $U(1)$-bundles as in Definition \ref{defnFZabel} with \textit{$L^p$-vectorfields having integer fluxes} through almost all spheres.
\end{rmk}
Note that in Definition \ref{defnFZabel} no assumption is made a priori, regarding the existence of an underlying topological bundle structure, and we only concentrate on the datum present in our target minimization problem \eqref{minpb}, namely the curvature form $F$, while the presence of an underlying bundle is witnessed just by the Chern class requirement. This is the natural setting where to construct new bundles by minimizing the energy, because the supercritical case is precisely characterized by the possibility of creation of topological singularities, and imposing an initial smooth structure precludes this possibility.
\begin{rmk}
 The sets along which we slice, in Definition \ref{defnFZabel}, are just spheres. By the density result of \cite{rivkess} however, it follows from such definition that automatically such integrality condition is valid on all codimension $1$ closed generic surfaces. In a similar way, the same result for generic surfaces can be achieved starting from a definition which involves slicing sets different from spheres, e.g. cubes, or some other family of sets becoming arbitrarily fine at each point, and allowing a similar density result.
\end{rmk}

\subsection{The closure theorem and the distance on slices: a parallel to the case of currents}\label{closuresec}
The above Definition \ref{defnFZabel} gives a description of bundles in terms of their slices on spheres, and therefore a suggestive parallel can be made with the theory of \textit{scans} present in \cite{HR},\cite{HR1}, \cite{hardtpauw}. A first fruit of this parallel is the idea leading to the proof of the closure theorem in \cite{PR1}:
\begin{theorem}[\cite{PR1}, Main Theorem]\label{closureweakbdl}
Suppose $F_n\in\mathcal F_{\mathbb Z}^p(\Omega)$ as in Definition \ref{defnFZabel} are weakly convergent to some $2$-form $F$. Then $F\in\mathcal F_{\mathbb Z}^p(\Omega)$.
\end{theorem}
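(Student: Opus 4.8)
\emph{Proof proposal.} Membership in $\mathcal F^p_{\mathbb Z}(\Omega)$ is a condition pointwise in the centre $x$ and a.e.\ in the radius $r$, so I would fix $x\in\Omega$, set $R_x=\op{dist}(x,\partial\Omega)$, and aim to prove that the flux function $\Phi(r):=\int_{\partial B(x,r)}i^*F$ lies in $\mathbb Z$ for a.e.\ $r\in(0,R_x)$; running this over all centres (and using the reformulation of the integrality condition on generic slices recalled after Definition~\ref{defnFZabel}) then gives $F\in\mathcal F^p_{\mathbb Z}(\Omega)$. First I would record the elementary facts: a weakly convergent sequence in $L^p$ is norm bounded, so $\sup_n\|F_n\|_{L^p(\Omega)}<\infty$; by the coarea formula the restrictions $i^*_{x,r}F$ and $i^*_{x,r}F_n$ are well-defined $L^p$-forms on $\partial B(x,r)$ for a.e.\ $(x,r)$; the functions $\Phi_n(r):=\int_{\partial B(x,r)}i^*F_n$ are $\mathbb Z$-valued a.e.\ by hypothesis; and $\{\Phi_n\}_n$ is bounded in $L^1_{\mathrm{loc}}((0,R_x))$ since $\int_{r_1}^{r_2}|\Phi_n|\,dr\le\int_{B(x,r_2)}|F_n|\lesssim\|F_n\|_{L^p(\Omega)}$.

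The only information weak convergence supplies directly is obtained by testing the coarea identity: for $\eta\in C^\infty_c((0,R_x))$ and $\varphi$ smooth on the unit sphere one has $\int\eta(r)\langle i^*_{x,r}F_n,\varphi\rangle\,dr=\int_\Omega F_n\wedge\omega_{\eta,\varphi}$ for an explicit smooth $1$-form $\omega_{\eta,\varphi}$ compactly supported in an annulus about $x$, so $F_n\rightharpoonup F$ yields $\langle i^*_{x,\cdot}F_n,\varphi\rangle\rightharpoonup\langle i^*_{x,\cdot}F,\varphi\rangle$ weakly in $L^1_{\mathrm{loc}}((0,R_x))$ for every $\varphi$. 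This is far short of what is needed: $\mathbb Z$-valued functions are not closed under weak $L^1$ convergence, since oscillation between consecutive integers averages to a non-integer, and it does not even provide convergence of the slices on individual spheres. Bridging this gap is the purpose of the slice distance of \cite{PR1}, in the spirit of the theory of scans \cite{HR,HR1,hardtpauw}. The plan has two parts: (i) the slice distance is built so that the total-flux map $\omega\mapsto\int_{S^2}\omega$ is continuous on slices --- the analogue of flat-continuity of the integer total degree of a $0$-dimensional scan --- so that the set of slices with $\mathbb Z$-valued flux is closed; and (ii) after passing to a subsequence, $i^*_{x,r}F_n\to i^*_{x,r}F$ in the slice distance for a.e.\ pair $(x,r)$ --- the analogue of compactness of scans of bounded mass and boundary mass. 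Granting (i) and (ii), each $\Phi_n(r)\in\mathbb Z$ forces $\Phi(r)\in\mathbb Z$ for a.e.\ $(x,r)$, which is the assertion.

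The substance is in Step~(ii), and it is there that one must use the $L^p$-bound on the \emph{forms} $F_n$ and not only on their slices. The integrality of $\int_{\partial B(y,\rho)}i^*F_n$ over all $y$ and a.e.\ $\rho$ is known to force $dF_n$ to be a locally finite atomic measure with integer weights, $dF_n=\sum_j\kappa^{(n)}_j\delta_{a^{(n)}_j}$, the ``monopoles'' of the weak bundle (cf.\ \cite{rivkess}): for a generic centre $y$ the radial flux is a $\mathbb Z$-valued jump function whose jumps are the individual charges $\kappa^{(n)}_j$, which excludes a diffuse part and forces each charge to be an integer. Near such an atom $F_n$ agrees, up to an $L^p$-bounded correction, with the field of a monopole of charge $\kappa^{(n)}_j$, whose $L^p$-norm on a ball is comparable to $|\kappa^{(n)}_j|$ times a fixed power of the radius (finite only in the range of exponents where the class is nontrivial); so the uniform bound on $\|F_n\|_{L^p}$ controls the total charge of the atoms of $dF_n$ on compact subsets, up to the possibility that many atoms of alternating sign clump at vanishing scales. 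I would then combine this quantization with a concentration-compactness extraction adapted to the slice distance --- using the H\"older-on-slices estimate proved in the present notes to transfer smallness across radii, and a blow-up analysis to absorb the clumping --- to produce a subsequence along which $i^*_{x,r}F_n$ is Cauchy in the slice distance for a.e.\ $(x,r)$; the weak convergence recorded in the previous paragraph, together with the equi-integrability produced by this compactness, then identifies the limit as $i^*_{x,r}F$.

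The main obstacle is precisely Step~(ii): weak $L^p$-convergence controls nothing on the null sets $\partial B(x,r)$, and for an arbitrary sequence of integer-flux functions the conclusion is false. The whole content of the theorem is that the quantization of $dF_n$ together with the $L^p$-bound on $F_n$ rules out the oscillation --- and, in the worst case, the clumping of opposite-charge monopoles --- that would otherwise let integer fluxes converge weakly to a non-integer; turning this into a compactness statement in the slice distance valid for almost every slice simultaneously is the technical heart of the proof.
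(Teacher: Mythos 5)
Your two–step skeleton (a closedness statement for integer fluxes under slice–distance convergence, plus a.e.\ compactness of the slices in that distance) is exactly the skeleton of the simplified proof in Section \ref{simplif}, but both steps are realized incorrectly or left at the point where the real difficulty sits. Step (i) is false as stated: the total flux $\omega\mapsto\int_{S^2}\omega$ is \emph{not} $d$-continuous on $\mathcal Y$, because the Dirac masses in the definition of $d$ can absorb any integer amount of degree at arbitrarily small $L^p$-cost (a smoothed Dirac concentrating at a point has degree $1$ but $d$-distance to $0$ tending to zero; its $L^p$-norm blows up, cf.\ Remark \ref{distancefig}). So the set of integer-degree slices is not $d$-closed per se; what is true, and what the paper uses, is Proposition \ref{metrizes}: on sequences with \emph{bounded} $L^p$-norm, $d$-convergence coincides with weak $L^p$-convergence, and only then does testing against the constant $1$ force the limiting degree to be an integer. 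This is precisely why the paper's proof contains the Fubini--Chebyshev step selecting, for a.e.\ radius, slices with uniformly bounded norm before invoking Proposition \ref{metrizes}; your proposal has no substitute for that step.

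Step (ii) is where the genuine gap lies. Your route goes through the claim that the integrality condition forces $dF_n$ to be a locally finite atomic measure $\sum_j\kappa_j^{(n)}\delta_{a_j^{(n)}}$ with integer charges controlled by $\|F_n\|_{L^p}$. Neither half is available for a general element of $\mathcal F^p_{\mathbb Z}(\Omega)$: Proposition \ref{densitykessel} only gives $dF_n=\partial I_n$ with $I_n$ an integer rectifiable $1$-current of finite mass, and $\partial I_n$ being a locally finite measure is equivalent to local finiteness of the singular set, which is the partial-regularity theorem of \cite{P3} for \emph{minimizers}, not a property of the class (the paper says this explicitly after Lemma \ref{kirchlemma}). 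Moreover the $L^p$-bound does not control the total charge: dipole pairs clumping at vanishing scales carry arbitrarily large total variation of $\partial I_n$ at arbitrarily small $L^p$-cost, so the ``concentration-compactness plus blow-up'' you defer to would have to do all the work of the theorem, and no argument is given. The paper's proof needs none of this: Proposition \ref{holder} shows the slice functions $\rho\mapsto i^*_{x,\rho}F_n$ are equi-H\"older-$(1-1/p)$ with respect to $d$, with constant bounded by $\sup_n\|F_n\|_{L^p}$, so an Ascoli-type extraction gives pointwise $d$-convergence of a subsequence of slice functions directly; combined with the Chebyshev selection of radii with bounded slice norms and Proposition \ref{metrizes}, the integer degree passes to the limit. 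I would rewrite your Step (ii) along these lines and drop the atomic-measure and quantization claims entirely.
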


The main achievement of the present paper is the definition of the boundary trace in Section \ref{defbdry}. On one hand such definition gives a nontrivial trace, while on the other hand it is preserved under weak convergence. This is due to the intervention of the slice distance directly in its definition, together with the properties described in Section \ref{regandconv}. An interesting consequence of that definition is the following result on the existence of minimizers for the problem \eqref{minpb}:

\begin{theorem}\label{minimexist}
Consider $\Omega=B^3$. Fix an $L^p$-integrable $2$-form $\phi$ on $\partial B^3$ having integer degree. Then the infimum in problem \eqref{minpb} with is achieved, if we interpret the boundary condition $F|_{\partial B^3}=\phi$ as $F\in\mathcal F_{\mathbb Z,\phi}^p(B^3)$, with notations as in Section \ref{defbdry}.
\end{theorem}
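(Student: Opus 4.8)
\emph{Plan of proof.} The strategy is the direct method of the calculus of variations, whose two nonstandard ingredients are the closure Theorem~\ref{closureweakbdl} and the stability of the boundary trace under weak $L^p$-convergence, this last point being exactly where the slice distance and the H\"older regularity of Section~\ref{regandconv} are used.

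First I would record that the admissible class $\mathcal F_{\mathbb Z,\phi}^p(B^3)$ is nonempty and that the infimum in \eqref{minpb} is finite: since $\phi$ is an $L^p$ $2$-form on $\partial B^3$ with integer degree, one builds an explicit competitor by extending $\phi$ radially into $B^3$ with the topological singularity collapsed onto a radial segment, and this competitor belongs to $\mathcal F_{\mathbb Z,\phi}^p(B^3)$ by construction. Let then $F_n\in\mathcal F_{\mathbb Z,\phi}^p(B^3)$ be a minimizing sequence, so $\int_{B^3}|F_n|^p\,dx\to m$, the infimum in \eqref{minpb}. Since $\int_{B^3}|F_n|^p\,dx$ is bounded and $1<p<\infty$, reflexivity of $L^p$ yields a subsequence (not relabeled) with $F_n\rightharpoonup F$ weakly in $L^p(B^3,\Lambda^2)$, and by convexity of $t\mapsto|t|^p$ the energy $F\mapsto\int_{B^3}|F|^p\,dx$ is weakly lower semicontinuous, whence $\int_{B^3}|F|^p\,dx\le m$. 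It therefore suffices to show $F\in\mathcal F_{\mathbb Z,\phi}^p(B^3)$, since this forces $\int_{B^3}|F|^p\,dx\ge m$, hence equality, i.e.\ $F$ is a minimizer.

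By Theorem~\ref{closureweakbdl}, the weak limit $F$ already lies in $\mathcal F_{\mathbb Z}^p(B^3)$, so the substantive point is that its boundary trace equals $\phi$. Recall from Section~\ref{defbdry} that $G\in\mathcal F_{\mathbb Z,\phi}^p$ means that the slices $S_r(G)$ of $G$ on $\partial B(0,r)$ converge, in the slice distance $d$, to $\phi$ as $r\to1^-$, and recall from Section~\ref{regandconv} that $r\mapsto S_r(G)$ is H\"older continuous for $d$ with a modulus controlled only by $\norm{G}_{L^p(B^3)}$. Applying this to each $F_n\in\mathcal F_{\mathbb Z,\phi}^p$ gives the \emph{uniform} estimate
\[
d\bigl(S_r(F_n),\phi\bigr)\le C\,(1-r)^{\alpha}\qquad\text{for a.e.\ }r\in(0,1)\text{ and all }n,
\]
with $C$ depending only on $\sup_n\norm{F_n}_{L^p}<\infty$ and $\alpha\in(0,1)$ on $p$. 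On the other hand, the compatibility of the slice distance with weak $L^p$-convergence from Section~\ref{regandconv} gives, for a.e.\ fixed $r$, that $S_r(F)$ is the weak limit of $S_r(F_n)$ and that $d(\,\cdot\,,\phi)$ is lower semicontinuous along this convergence (after a further subsequence if needed); hence $d\bigl(S_r(F),\phi\bigr)\le\liminf_n d\bigl(S_r(F_n),\phi\bigr)\le C\,(1-r)^\alpha$ for a.e.\ $r$. Letting $r\to1^-$ shows $S_r(F)\to\phi$ in $d$, i.e.\ $F\in\mathcal F_{\mathbb Z,\phi}^p(B^3)$, which finishes the argument.

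The main obstacle is precisely the exchange of limits in the last paragraph. One must make sure that the H\"older estimate for $r\mapsto S_r(G)$ has its constant written purely in terms of the $L^p$-energy of $G$, so that it is uniform along the minimizing sequence and gives control near $r=1$ that survives $n\to\infty$; and one must have genuine continuity (at least lower semicontinuity) of the slice distance to $\phi$ under weak $L^p$-convergence for almost every radius. Granting these two properties — which are the content of Sections~\ref{regandconv} and~\ref{defbdry} — the existence of a minimizer follows from the standard weak-compactness and lower-semicontinuity scheme above.
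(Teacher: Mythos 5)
Your proposal is correct and follows essentially the same route as the paper: the direct method with weak $L^p$-compactness and lower semicontinuity, Theorem~\ref{closureweakbdl} to keep the limit in $\mathcal F_{\mathbb Z}^p(B^3)$, and stability of the boundary trace under weak convergence. The trace-stability step you sketch (uniform H\"older control of the slices via Proposition~\ref{holder} combined with the weak-convergence compatibility of $d$ via Proposition~\ref{metrizes}) is precisely the content of Lemma~\ref{boundarys}, which the paper simply invokes at that point.
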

\begin{proof}
Fix a minimizing sequence $F_i$ in the class $\mathcal F_{\mathbb Z,\phi}^p(B^3)$. Up to extracting a subsequence we may suppose that $F_i\stackrel{L^p}{\rightharpoonup}F$, which by weak semicontinuity of the $L^p$-norm has energy at most equal to the infimum in problem \eqref{minpb}. Theorem \ref{closureweakbdl} gives then the fact that $F\in\mathcal F_{\mathbb Z}(B^3)$, while Lemma \ref{boundarys} of Section \ref{defbdry} gives $F\in\mathcal F_{\mathbb Z,\phi}^p(B^3)$ as wanted.
\end{proof}
\begin{rmk}
The same result and proof hold in the case of domains $\Omega\neq B^3$ such that $\Omega$ is just bilipschitz equivalent to a smooth domain $\tilde\Omega$. In that case we will have to first define the distance analogous to our $d$ on slices along the sets $\partial\tilde\Omega_r$ which foliate in the usual way a tubular neighborhood of $\partial\tilde\Omega$, then using that distance define the boundary trace exactly as in Section \ref{defbdry}. The proof of H\"older dependence on the parameter $\rho$ proceeds as in Section \ref{regandconv}, and this is enough to prove the results in Section \ref{defbdry}. Proceeding as in Section \ref{lipslices} we can then obtain the same result for $\Omega$, simply by using the bilipschitz equivalence.
\end{rmk}

The fact that minimizers in \eqref{minpb} have finitely many singularities in any compact $K\Subset \Omega$ is proved in \cite{P3}. The boundary regularity (and thus the fact that singularities of minimizers are isolated) will be addressed in \cite{P4}.

We now build an analogy between \cite{AK} (see also the more recent development \cite{AG}), \cite{HR} and the case treated here. Consult \cite{Federer} for the notations of the next paragraph.\\

Recall that in \cite{AK} normal currents $T$ on a metric space $E$ were identified by the property that their slices by Lipschitz functions $f\in\op{Lip}(E,\mathbb R^n)$, having values in the space $\mathcal D_0$ of $0$-dimensional currents
$$
S_f:\mathbb R^n\to\mathcal D_0(E),\quad x\mapsto\langle T,f,x\rangle,
$$
were metric bounded variation (MBV) functions, where $\mathcal D_0$ is endowed with the flat metric
$$
d(\mu_1,\mu_2)=\inf\{\mathbb M(S)+\mathbb M(T): \mu_1-\mu_2=S+\partial T, S\in\mathcal D_0(E), T\in\mathcal D_1(E)\}.
$$
For MBV slice functions it was then proved that the union of the atoms of the slices constituted a \emph{rectifiable} set. Such construction of a rectifiable set tailored on a normal current was the main step for the later closure theorems. The proof of the rectifiability was based on the following estimate valid for $\op{MBV}(\mathbb R^n,S)$ functions $u$, where $(S,d)$ is a weakly separable metric space:
\begin{equation}\label{maxest}
d(u(x),u(y))\leq C(MDu(x)+MDu(y))|x-y|, 
\end{equation}
where $MDu$ is a $L^{1,\infty}$-function related to $Du$ (see \cite{AK}, p.42 for a precise definition).\\

In our case, we slice along spheres, as in Definition \ref{defnFZabel}. Consider a form $F\in\mathcal F_{\mathbb Z}^p(\Omega)$, and a ball $B(x,r)\Subset \Omega$. Then if $i_{x,\rho}:S^2\subset\mathbb R^3\to \partial B(x,\rho)$ is the identification by dilation and translation, we can define the slice function as
$$
S_x:]0,r[\to \mathcal Y, \quad \rho\mapsto i_{x,\rho}^*F,
$$
and the natural choice for the space $\mathcal Y$ is indicated again by Definition \ref{defnFZabel}:
$$
\mathcal Y:=\left\{h\:L^p\text{-form on }S^2:\:\int_{S^2}h\in\mathbb Z\right\}.
$$
\begin{rmk}
 We could also define the slice function on the space of all balls $B\Subset\Omega$ by defining $S(B(x,\rho))= i_{x,\rho}^*F$. This function takes almost everywhere values on $\mathcal Y$.
\end{rmk}

The choice of the suitable distance on $\mathcal Y$ is actually justified by the need of a property like \eqref{maxest}. The picture justifying the definition of the distance depends on the density result of \cite{rivkess} (see \cite{Kessel}, Section 6), and is based on the following result:
\begin{proposition}\label{densitykessel}
 Suppose that $F\in\mathcal F_{\mathbb Z}^p(\Omega)$. Then there exists a $1$-dimensional rectifiable current $I$ such that $\mathbb M(I)\leq C||F||_{L^1}$ for a constant $C$ independent of $F$ and $\langle\phi, \partial I\rangle=\int d\phi\wedge F$ for all $\phi\in C^\infty_c(\Omega)$.
\end{proposition}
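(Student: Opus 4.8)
The plan is to build $I$ by approximation: first for a dense subclass of $\mathcal F_{\mathbb Z}^p(\Omega)$ consisting of ``explicit'' curvatures, where the connecting current is visibly a finite union of segments, and then to pass to the limit using the compactness theorem for integral currents. Observe first that the mass bound and the boundary identity are, separately, elementary. For any $L^1$ $2$-form $G$ on $\Omega$ the rule $I_G(\omega):=\int_\Omega\omega\wedge G$, defined on bounded $1$-forms $\omega$, is a $1$-dimensional current with $\mathbb M(I_G)\le c\,\|G\|_{L^1}$, and integrating by parts against $\phi\in C^\infty_c(\Omega)$ gives $\langle\phi,\partial I_G\rangle=I_G(d\phi)=\int_\Omega d\phi\wedge G$, i.e. $\partial I_G=-dG$ in $\mathcal D'(\Omega)$. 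So $I_F$ itself already satisfies the two requirements; the entire difficulty is that $I_F$ is absolutely continuous with respect to Lebesgue measure, hence never rectifiable unless $F=0$, so one must replace it by a rectifiable current carrying the same distributional boundary $-dF$.

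To do this I would invoke the density result of Rivi\`ere--Kessel (\cite{rivkess}; see also \cite{Kessel}, Section~6): there is a sequence $F_k$, each of which is, up to a fixed normalization constant $c_0$, the curvature of a line bundle with finitely many point singularities $a^k_1,\dots,a^k_{N_k}$ of integer degrees $d^k_1,\dots,d^k_{N_k}$, with $F_k\to F$ strongly in $L^p(\Omega)$, with $\|F_k\|_{L^1}\le C\|F\|_{L^1}$, and, crucially, with the Brezis--Coron--Lieb-type charge bound $\sum_i|d^k_i|\le C\|F_k\|_{L^1}$, the constant $C$ being independent of $k$. For each such $F_k$ one has $dF_k=c_0\sum_i d^k_i\,\delta_{a^k_i}$ as distributions.

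Next, for each $k$ choose an integer-multiplicity rectifiable $1$-current $\tilde I_k$ supported on a finite union of segments with $\partial\tilde I_k=-\sum_i d^k_i\,\delta_{a^k_i}$ in $\Omega$ (connect positive to negative charges by any pairing, and route any net charge to $\partial\Omega$). An elementary pairing bound gives $\mathbb M(\tilde I_k)\le C(\Omega)\sum_i|d^k_i|$, so that $I_k:=c_0\tilde I_k$ satisfies $\partial I_k=-dF_k$ together with the \emph{uniform} estimates $\mathbb M(I_k)\le C\|F\|_{L^1}$ and $\mathbb M(\partial I_k)=|c_0|\sum_i|d^k_i|\le C\|F\|_{L^1}$. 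Since all $I_k$ are supported in the fixed compact set $\overline\Omega$, the Federer--Fleming compactness theorem (\cite{Federer}) yields a subsequence converging weakly to a rectifiable (indeed integral) $1$-current $I$ with $\mathbb M(I)\le\liminf_k\mathbb M(I_k)\le C\|F\|_{L^1}$. Finally $\partial$ is continuous for the weak topology, so $\partial I_k\rightharpoonup\partial I$; and $F_k\to F$ in $L^1$ forces $dF_k\to dF$ in $\mathcal D'(\Omega)$; comparing, $\partial I=-dF$, that is $\langle\phi,\partial I\rangle=\int_\Omega d\phi\wedge F$ for all $\phi\in C^\infty_c(\Omega)$, as required.

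The main obstacle is the uniform charge estimate $\sum_i|d^k_i|\le C\|F_k\|_{L^1}$ for the approximating curvatures: a naive bound through a cube decomposition of mesh $\ell$ only gives $\sum_Q\bigl|\int_{\partial Q}i^*F\bigr|\le C\ell^{-1}\|F\|_{L^1}$, which diverges as $\ell\to0$, so one genuinely needs the Brezis--Coron--Lieb mechanism, by which the flux through a surface measures the \emph{net} enclosed charge and hence does not proliferate under refinement; this is exactly the estimate underlying the density picture of \cite{rivkess}. A secondary point requiring care is the behavior at $\partial\Omega$: the connecting current may have to exit the domain, which is why the constant $C$ is allowed to depend on $\Omega$ and why this does not affect the interior identity, the test forms $\phi$ having compact support in $\Omega$.
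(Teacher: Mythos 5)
There is a genuine gap, and it sits exactly where you flag the ``main obstacle'': the uniform charge estimate $\sum_i|d^k_i|\le C\|F_k\|_{L^1}$ is simply false, and the Brezis--Coron--Lieb mechanism does not rescue it. A dipole of degrees $\pm1$ at mutual distance $\epsilon$ has total absolute charge $2$ but curvature of $L^1$-norm of order $\epsilon\log(1/\epsilon)$, so no constant $C$ independent of the configuration can exist; likewise, in the grid construction behind the density theorem of \cite{rivkess}/\cite{Kessel} the number of singularities of $F_k$ (and their total absolute degree) genuinely blows up like $\epsilon^{-1}\|F\|_{L^1}$ as the mesh is refined. What Brezis--Coron--Lieb duality controls is not the number of charges but the \emph{length of a minimal connection}: since $\langle dF_k,\phi\rangle=-\int d\phi\wedge F_k\le\mathrm{Lip}(\phi)\,\|F_k\|_{L^1}$, one gets $\mathbb M(I_k)\le C\|F_k\|_{L^1}$ directly for a minimal connecting current (in the grid picture: segments of length $\sim\epsilon$ carrying total flux $\lesssim\epsilon^{-1}\|F\|_{L^1}$), with no control whatsoever on $\mathbb M(\partial I_k)=\sum_i|d^k_i|$.

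This false estimate is load-bearing in your limit step: without a uniform bound on $\mathbb M(\partial I_k)$ you cannot invoke the Federer--Fleming compactness theorem, and a mass bound alone does not give rectifiability of a weak limit of integer rectifiable $1$-currents (standard example: $k$ disjoint vertical segments of length $1/k$ in the plane have mass $1$ and converge weakly to a diffuse, non-rectifiable current supported on a horizontal segment with vertical orientation). So the passage to the limit must be organized differently -- e.g.\ as in the cited construction, by controlling the flat distance between successive grid approximations and using rectifiability/closure results for finite-mass flat chains with integer coefficients -- rather than through the integral-current compactness theorem. Note also that the paper does not reprove this proposition: it is quoted from \cite{rivkess} (see \cite{Kessel}, Section 6), so the statement you should be reproducing is precisely that construction, whose quantitative heart is the minimal-connection mass bound, not a charge count.
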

The situation between two slices centered at $x$ is shown in Figure \ref{fig:slices}.
\begin{figure}[htp]
\centering
\scalebox{0.5}{\input{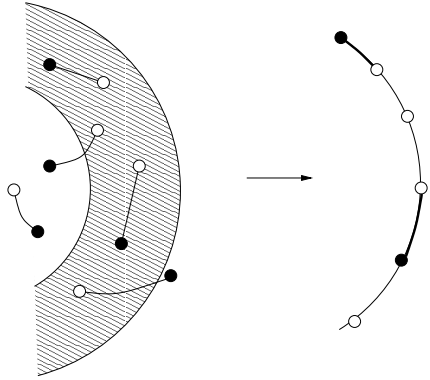}}
\caption{We represent schematically (i.e. we forget for a moment that we are in a $3$-dimensional setting) the use of Proposition \ref{densitykessel}. On the left the portion between two spherical shells is shown, and the current $I$ is represented by a collection of segments, where the boundary components with opposite signs are represented by small balls. An integration in the radial direction reduces us to the picture on the right, where part of the boundary of $I$ projects to a boundary of a current, while for boundaries of components of $I$ which are only partly inside the spherical shell, we obtain a number of Dirac masses. Such number is finite for almost every couple of slices. Applying Stokes' theorem we can compare the slices with the derivative of our $2$-form inside the spherical shell.}\label{fig:slices}
\end{figure}
 Therefore we would expect that the property that $\partial I$ coincides with $d^*F$ in the weak sense be preserved by some projection along the radial segments connecting the slices. Given this picture and this idea, the definition of the following analogous of the flat distance is justified:
\begin{definition}[\cite{PR1}]
Let $h_1,h_2\in\mathcal Y$. We define then the following function
$$
d(h_1, h_2):=\inf\left\{\|\alpha\|_{L^p}:\: h_1-h_2 = d^*\alpha + \partial I + \sum_{i=1}^N n_i\,\delta_{a_i}\right\},
$$
where the infimum is taken over all triples given by a $L^p$-integrable $1$-form $\alpha$, an integer $1$-current $I$ of finite mass, and an $N$-ple of couples $(a_i,n_i)$, where $a_i\in S^2$ and $n_i\in\mathbb Z$.
\end{definition}
In \cite{PR1} it was proved that $d$ as above defines a distance, and satisfies an inequality like \eqref{maxest}, where $MDu$ is replaced by a $L^{p,\infty}$-function depending on $F$ and on the ratio of the two radii defining the slices. The Closure Theorem \ref{closureweakbdl} was proved using the interplay between this metric and the function 
$$
\mathcal N:\mathcal Y\to\mathbb R^+,\quad h\mapsto ||h||_{L^p(S^2)}.
$$
\subsection{Outline of the paper}
In the next section we discuss some definitions of metrics on slices, relying mainly on the density result \cite{rivkess}. In Section \ref{regandconv} we prove that the slice function $S$ is actually H\"older with respect to the above distance, and that it metrizes the weak sequential convergence on slices of bounded norm. A parallel is drawn with the classical case of slices of $W^{1,p}$-vectorfields. In Section \ref{lipslices} we show how to extend the distance to different kinds of slices, where the space $S^2$ in the definition of $\mathcal Y$ is replaced by a Lipschitz domain. This result is used to give a suitable definition of the boundary trace for curvatures belonging to $\mathcal F_{\mathbb Z}^p(\Omega)$ in Section \ref{defbdry}. Then, in the final section, we state several open questions and indicate some further directions of research.

\subsection{Acknowledgements}
I wish to thank my advisor Prof. Tristan Rivi\`ere for introducing me to the topics of this paper and for the many enlightening discussions we had on the topics of this paper. I would also like to thank Prof. Bernd Kirchheim for pointing out the proof of Lemma \ref{kirchlemma}.

\section{Some other definitions of slice distances}\label{eqdef}
We will compare here the distance on $\mathcal Y:=\left\{h\in L^p(S^2):\: \int_{S^2} hd\mathcal H^2 \in\mathbb Z\right\}$ defined by
$$
d(h_1,h_2):=\inf\left\{||\alpha||_{L^p}:\:h_2-h_1=d^*\alpha +\partial I+\sum_{i=1}^Nn_i\delta_{a_i}\right\}
$$
as in the Introduction, to the following function:
$$
d_1(h_1,h_2):=\inf\left\{||\alpha||_{L^p}:\:h_2-h_1=d^*\alpha +\sum_{i=1}^Nn_i\delta_{a_i}\right\}
$$
We define also the following function, in the struggle to free our distance $d$ from the presence of an unknown sum of Dirac masses:
$$
d_2(h_1,h_2)=\lim_{\epsilon\to 0}\inf_{\alpha,A}\left\{||\alpha||_{L^p}:\:\op{spt}\left[(h_2-h_1)-d^*\alpha\right]\subset A, \:A\text{ open },|A|\leq \epsilon\right\}.
$$

The motivations for introducing these objects are as follows:
\begin{enumerate}
 \item Sometimes in applications, as for example in Section \ref{lipslices}, it is easier to deal with definitions in terms of finite, rather than infinite, sets of ``topological'' singularities. This justifies the introduction of $d_1$.
 \item It is natural to ask whether or not our distance $d$ is induced by a norm on $L^p(S^2)$. Candidates for such norms would then be norms which ``don't see small sets'', in particular they should not be sensible to the presence of the singular measures defining $d$. More importantly, having an underlying norm could perhaps help to define new and more natural notions of critical points for our energy. Investigating the relationship between $d$ and $d_2$ seems a reasonable first step in that direction of research. 
\end{enumerate}

We will use the following density result:
\begin{proposition}[see Prop. 1.3 of \cite{P1}]\label{density}
 Fix an exponent $p>1$ and consider the space $\mathcal V_{\mathbb Z}$ consisting of all $L^p$-integrable $1$-forms $\alpha$ on $S^2$ such that 
$$
\int_{S^2}\alpha\wedge d\phi=\langle \partial I,\phi\rangle,\quad\forall\phi\in C^\infty(S^2),
$$
where $I$ is an integer rectifiable $1$-current of finite mass on $S^2$. Then its subspace $\mathcal V_R$ given by the $1$-forms which are smooth outside a discrete (thus finite) set, is dense in the $L^p$-norm.
\end{proposition}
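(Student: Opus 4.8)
The plan is to extract the ``topologically essential'' part of $\alpha\in\mathcal V_{\mathbb Z}$ by an $L^p$ Hodge decomposition and then approximate that part by a hierarchically organized finite sum of explicit elementary singular forms.

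First I would use the $L^p$ Hodge (Calder\'on--Zygmund) decomposition on $S^2$: since $H^1_{\mathrm{dR}}(S^2)=0$, every $\alpha\in L^p\Omega^1(S^2)$ splits as $\alpha=df+\beta$ with $f\in W^{1,p}(S^2)$ and $\beta$ co-closed, $d^{*}\beta=0$. The exact part is dealt with trivially: replacing $f$ by a smooth function close to it in $W^{1,p}$ changes $\alpha$ by $df'$ with $f'\in C^\infty$, which neither enlarges the singular set nor leaves $\mathcal V_{\mathbb Z}$. So it suffices to approximate $\beta$, which is the unique co-closed $1$-form with $d\beta=d\alpha=\partial I=:\mu$, a purely atomic integer measure $\mu=\sum_k n_k\delta_{a_k}$ with $\mu(S^2)=\langle\partial I,1\rangle=0$; concretely $\beta=d^{*}G\mu$, where $G$ is the Green operator of the Laplacian on $S^2$. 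Note that for $p\ge 2$ there can be no atoms at all, since near an atom $|\beta|\gtrsim|n_k|\,\mathrm{dist}(\cdot,a_k)^{-1}\notin L^2$; then $\mu=0$, $\beta$ is a harmonic $1$-form on $S^2$, hence $\beta=0$, and the Proposition reduces to density of $C^\infty$ in $W^{1,p}$. So assume $1<p<2$ from now on.

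The elementary model forms are the dipole potentials $G_{a,b}$, i.e. the co-closed $1$-form with $dG_{a,b}=\delta_a-\delta_b$: it is smooth on $S^2\setminus\{a,b\}$, lies in $L^p$ for $1<p<2$ (its near field is $\sim\mathrm{dist}(\cdot,\{a,b\})^{-1}$, its far field a dipole $\sim\mathrm{dist}(a,b)\,r^{-2}$, so $\|G_{a,b}\|_{L^p}\lesssim\mathrm{dist}(a,b)^{(2-p)/p}$), and belongs to $\mathcal V_R$ since $\delta_a-\delta_b$ is the boundary of the integer $1$-current carried by a geodesic from $b$ to $a$. I would then decompose $\mu$ into dipoles \emph{hierarchically}, rather than via a curve decomposition of $I$: fixing a dyadic-type partition of $S^2$ into cells of size $\sim 2^{-m}$, let $\mu_m$ be the measure obtained by collapsing the $\mu$-mass of each scale-$m$ cell $Q$ to its center $c_Q$, so that the increment $\mu_m-\mu_{m-1}=\sum_Q\mu(Q)\,(\delta_{c_Q}-\delta_{c_{\mathrm{par}(Q)}})$ is a finite integer combination of dipoles of length $\lesssim 2^{-m}$. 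Setting $\beta^{(m)}:=d^{*}G(\mu_m-\mu_{m-1})$, the partial sum $B_M:=d^{*}G\mu_M=\sum_{m\le M}\beta^{(m)}$ is a finite $\mathbb Z$-combination of the $G_{c_Q,c_{\mathrm{par}(Q)}}$; it is smooth outside the finite set of active cell centers up to scale $M$, has $dB_M=\mu_M$ an integer atomic measure of zero total mass, hence $B_M\in\mathcal V_R$ (and adding back $d$ of a smooth approximant of $f$ keeps us there).

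The remaining point, and the genuinely delicate one, is that $B_M\to\beta$ in $L^p$, i.e. $\|\sum_{m>M}\beta^{(m)}\|_{L^p}\to 0$. The naive triangle bound $\sum_m\|\beta^{(m)}\|_{L^p}$ need not converge when $p$ is near $2$, because the exponent $(2-p)/p$ is strictly less than $1$; so one must combine: (i) an almost-orthogonality of the $\beta^{(m)}$ at distinct dyadic scales — they are essentially Littlewood--Paley blocks, giving $\|\sum_{m>M}\beta^{(m)}\|_{L^p}^p\lesssim\sum_{m>M}\|\beta^{(m)}\|_{L^p}^p$ in the range $p<2$; (ii) a dyadic energy estimate $\|\beta^{(m)}\|_{L^p}^p\lesssim\sum_{Q\text{ at scale }m}|\mu(Q)|^{p}\,2^{-m(2-p)}$, from the disjointness of the dipole near fields together with the hierarchical cancellation of their far fields; and (iii) the a priori fact $\beta\in L^p$, which through the corresponding lower bound yields $\sum_{m,Q}|\mu(Q)|^{p}\,2^{-m(2-p)}\lesssim\|\beta\|_{L^p}^{p}<\infty$, so that the tails of this series vanish. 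Together these force $B_M\to\beta$ in $L^p$, hence $\mathcal V_{\mathbb Z}\subseteq\overline{\mathcal V_R}$ in $L^p$, which is the Proposition.

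An alternative, more geometric route avoiding the harmonic-analytic estimate is a cut-and-fill argument run directly on $\alpha$: using absolute continuity of $\int|\alpha|^{p}$ and the sparseness of the atoms forced by $\alpha\in L^p$, one encloses all but finitely many atoms in an open set $U$ of small measure, with finitely many components and with $\|\alpha\|_{L^p(U)}$ small; a coarea (Fubini) choice of the cutting curves $\partial U_i$ also makes $\|\alpha\|_{L^p(\partial U_i)}$, and hence the \emph{integer} fluxes $\int_{\partial U_i}\alpha=\mu(U_i)$, vanish, after which $\alpha$ is replaced inside each $U_i$ by a smooth extension of its (small) boundary trace, staying in $\mathcal V_R$ and moving by $O(\epsilon)$ in $L^p$. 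Either way the crux is the same: reconciling the $L^p$-smallness of the discarded or replaced piece with the integrality constraint that the approximant's exterior derivative still be the boundary of an integer $1$-current. That is where I expect the main work to lie; everything else is bookkeeping.
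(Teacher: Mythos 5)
Your argument hinges, right at the start, on the identification $d\alpha=\partial I=\mu=\sum_k n_k\delta_{a_k}$ with $\mu$ ``a purely atomic integer measure''. That is not what membership in $\mathcal V_{\mathbb Z}$ gives you: the hypothesis is only that $d\alpha$ coincides, as a distribution, with the boundary of an integer rectifiable $1$-current $I$ of \emph{finite mass}, and boundaries of finite-mass currents are flat $0$-chains which in general have infinite mass and are not measures at all. For instance, take oriented geodesic segments from $b_i$ to $a_i$ of lengths $\ell_i=\mathrm{dist}(a_i,b_i)$, accumulating on a set of positive measure, with $\sum_i\ell_i<\infty$ and $\sum_i\ell_i^{(2-p)/p}<\infty$ but infinitely many segments: the sum of the corresponding dipole fields $G_{a_i,b_i}$ lies in $\mathcal V_{\mathbb Z}$, yet $d\alpha$ assigns infinite positive and negative charge to neighbourhoods of the accumulation points, so the cell charges $\mu(Q)$, the collapsed measures $\mu_m$, and hence your hierarchical dipole decomposition are simply undefined. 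In other words, the atomic case you treat (where the conclusion is comparatively easy, since then $\beta=d^*G\mu$ is already smooth off a discrete set up to the $W^{1,p}$ part) is obtained by assuming away exactly the difficulty the proposition is about. The same objection applies to your ``alternative geometric route'', which again speaks of enclosing ``all but finitely many atoms''; and the $p\ge 2$ reduction suffers from the same flaw (though that range is irrelevant for the paper, which uses $1<p<3/2$). A repaired version of your strategy would have to run the multiscale scheme on the current $I$ itself (which does have finite mass), approximating it by currents with finite boundary mass and showing that the $L^p$ corrector forced by the integrality constraint is small --- and that is precisely where the substance of the proof lies.

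For comparison: the present paper does not prove this statement at all; it imports it as Proposition 1.3 of \cite{P1}. The mechanism suggested by the way \cite{P1} is used elsewhere in the paper (its main theorem writes co-closed competitors as $u^*\theta$ with $u\in W^{1,p}(S^2,S^1)$) is to transfer the problem to circle-valued Sobolev maps and invoke a Bethuel--Zheng type good/bad cube argument, where maps smooth outside finitely many points are dense in $W^{1,p}(S^2,S^1)$ for $p<2$; that route works at the level of the maps and never needs $d\alpha$ to be a measure. If you want a purely form-level proof, you must either reproduce such a grid argument directly on $\alpha$ (controlling integer fluxes through generic cell boundaries chosen by Fubini, as in your last paragraph, but without any appeal to ``atoms''), or first prove a structure theorem reducing general $\partial I$ to convergent sums of dipoles; neither step is bookkeeping.
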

\subsection{The distance $d_1$}
\begin{proposition}\label{eqdistances}
 On $\mathcal Y$ we have $d=d_1$.
 \end{proposition}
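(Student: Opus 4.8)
The plan is as follows. The inequality $d\le d_1$ is immediate, since every triple $(\alpha,(a_i,n_i)_{i=1}^N)$ admissible in the infimum defining $d_1$ becomes, upon adjoining $I=0$, a triple admissible for $d$; thus the infimum defining $d$ runs over a larger class. The real content is $d_1\le d$, which I would obtain by showing that the ``current term'' $\partial I$ in any decomposition admissible for $d$ can be traded, at arbitrarily small cost in $L^p$, against finitely many integer Dirac masses. Concretely, fix $\epsilon>0$ and a decomposition $h_2-h_1=d^*\alpha+\partial I+\sum_{i=1}^N n_i\delta_{a_i}$ with $\|\alpha\|_{L^p}<d(h_1,h_2)+\epsilon$. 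It suffices to produce a $1$-form $\beta$ on $S^2$ with $\|\beta\|_{L^p}<\epsilon$ and finitely many integers $m_j$ at points $b_j\in S^2$ such that $\partial I=d^*\beta+\sum_j m_j\delta_{b_j}$: then $h_2-h_1=d^*(\alpha+\beta)+\sum_j m_j\delta_{b_j}+\sum_i n_i\delta_{a_i}$ exhibits $\alpha+\beta$ as admissible for $d_1$, so $d_1(h_1,h_2)\le\|\alpha\|_{L^p}+\epsilon<d(h_1,h_2)+2\epsilon$, and letting $\epsilon\to0$ we are done.

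To build such a $\beta$ in the range $1<p<2$ I would argue as follows. Since $h_2-h_1\in L^p(S^2)\hookrightarrow W^{-1,p}(S^2)$, $d^*\alpha\in W^{-1,p}$, and (for $p<2$) point masses on $S^2$ belong to $W^{-1,p}$, the distribution $\partial I=(h_2-h_1)-d^*\alpha-\sum_i n_i\delta_{a_i}$ lies in $W^{-1,p}(S^2)$; it also has zero average, since $\langle\partial I,1\rangle=\langle I,d1\rangle=0$. Hence one can solve the Poisson equation $\Delta u=\partial I$ on $S^2$ with $u\in W^{1,p}(S^2)$, and $\gamma:=du\in L^p(S^2)$ satisfies $d^*\gamma=\partial I$. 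In particular $d^*\gamma$ is the boundary of an integer rectifiable $1$-current of finite mass; after the Hodge--$\star$ identification that converts the pairing $\int_{S^2}\alpha\wedge d\phi$ appearing in Proposition \ref{density} into the codifferential $d^*$ used in the distance (and since $\star$ is an $L^p$-isometry on $1$-forms), this says exactly that $\gamma$ lies in the space $\mathcal V_{\mathbb Z}$ of Proposition \ref{density}. Applying that proposition, I choose $\gamma_R$, smooth outside a finite set $\{b_1,\dots,b_J\}$, with $\|\gamma-\gamma_R\|_{L^p}<\epsilon$.

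The decisive point is then that $d^*\gamma_R$ is a \emph{finite} sum of integer Dirac masses. Indeed $d^*\gamma_R$ is smooth off $\{b_j\}$ and, because $\gamma_R\in\mathcal V_{\mathbb Z}$, equals the boundary of an integer rectifiable $1$-current of finite mass; such a boundary is purely atomic — a countable sum of integer Dirac masses — so it has no part smooth off $\{b_j\}$ and we must have $d^*\gamma_R=\sum_{j=1}^J m_j\delta_{b_j}$ with $m_j\in\mathbb Z$. (Equivalently, the approximants furnished by Proposition \ref{density} are of the explicit ``dipole'' type, coclosed away from finitely many points, with integer residues; making this precise is in my view the single delicate ingredient of the argument, the rest being bookkeeping.) Taking $\beta:=\gamma-\gamma_R$ we get $\partial I=d^*\gamma=d^*\gamma_R+d^*\beta=\sum_j m_j\delta_{b_j}+d^*\beta$ with $\|\beta\|_{L^p}<\epsilon$, which is the trade required in the first step.

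It remains to note that for $p\ge2$ the scheme above degenerates, but the statement is then essentially trivial: a nonzero countable sum of integer Dirac masses has infinite $W^{-1,p}$-norm when $p\ge2$, so the identity $\partial I=(h_2-h_1)-d^*\alpha-\sum_i n_i\delta_{a_i}$, whose right-hand side still lies in $W^{-1,p}$ modulo the finitely many masses $\delta_{a_i}$, forces $\partial I$ itself to be a finite sum of integer Dirac masses. Consequently $h_2-h_1=d^*\alpha+(\text{finite integer Dirac sum})$ is already admissible for $d_1$, so the two classes of admissible decompositions coincide and $d=d_1$ outright.
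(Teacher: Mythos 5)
Your proposal follows essentially the same route as the paper's proof: the trivial inequality $d\le d_1$, then an elliptic potential to recast the remaining obstruction as an element of $\mathcal V_{\mathbb Z}$, then the density of $\mathcal V_R$ (Proposition \ref{density}) together with the fact that the codifferential of a $\mathcal V_R$-form is a finite integer combination of Dirac masses, so that $\partial I$ can be traded for finitely many integer atoms at arbitrarily small $L^p$ cost. The only cosmetic difference is that you solve $\Delta u=\partial I$ directly, while the paper solves $d^*dg=h-\delta_0\int_{S^2}h$ and applies Proposition \ref{density} to $\alpha_\epsilon+dg$; the two maneuvers are equivalent, and both require $p<2$ (a Dirac mass must admit a $W^{1,p}$ potential), consistent with the paper's standing range $p\in\,]1,3/2[$.

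Two points in your write-up do not hold as stated. First, your justification of the ``decisive point'' is flawed: the boundary of an integer rectifiable $1$-current of finite mass need \emph{not} be purely atomic --- it need not even be a measure (countably many disjoint short segments with summable lengths can have endpoint dipoles summing to a diffuse distribution); finitely many integer atoms are guaranteed only when $\partial J$ itself has finite mass, by boundary rectifiability (see \cite{Federer}). Likewise, ``smooth outside a finite set'' alone does not force the smooth part of $d^*\gamma_R$ to vanish. What makes the step legitimate is the actual structure of $\mathcal V_R$ in \cite{P1}: its elements are of the form $u^*\theta$ with $u\in W^{1,p}(S^2,S^1)$ smooth off finitely many points, hence closed (coclosed after the $\star$-identification) away from those points, so their distributional codifferential is precisely the finite sum of integer Dirac masses given by the local degrees; this is the fact the paper invokes when it writes $d^*f_\epsilon^k=-\Sigma_\epsilon^k$. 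Second, the $p\ge 2$ coda is both unnecessary (the proposition is used for $p\in\,]1,3/2[$, and the potential step already forces $p<2$) and invalid: $\partial I$ is not a priori a countable sum of Dirac masses, and the identity $\partial I+\sum_i n_i\delta_{a_i}=(h_2-h_1)-d^*\alpha\in W^{-1,p}$ does not force $\partial I$ to be a finite Dirac sum, since diffuse boundaries as above may well lie in $W^{-1,p}$. With the first point repaired by quoting \cite{P1} (or by keeping track of the explicit form of the approximants), your argument coincides with the paper's.
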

\begin{proof}
We clearly have $d_1\geq d$ since the infimum defining $d_1$ is taken on a smaller class. To prove the opposite inequality, fix $h=h_2-h_1$ and consider a minimizing sequence $\alpha_\epsilon$ as in the minimization defining $d$. We then have 
$$
(d^*\alpha_\epsilon) +\left(h-\delta_0\int_{S^2}h\right)=\partial I_\epsilon,\quad ||\alpha_\epsilon||_{L^p}\to d(h_1,h_2).
$$
We consider the function $g$ satisfying the following equation:
$$
d^*dg=h-\delta_0\int_{S^2}h,\quad \int_{S^2}g=0.
$$
By standard elliptic theory we have that $||dg||_{L^p}\leq C||h||_{L^p}$. It follows that 
$$
d^*(\alpha_\epsilon +dg)=\partial I_\epsilon,
$$
and Proposition \ref{density} applies then to $\alpha_\epsilon +dg$, giving us a decomposition 
$$
\alpha_\epsilon +dg=f_\epsilon^k +e_\epsilon^k,
$$
where $f_\epsilon^k\in \mathcal V_R$ is smooth outside of a finite set and $e_\epsilon^k\stackrel{(k\to\infty)}{\longrightarrow}0$ in $L^p$-norm. In particular there exists a measure $\Sigma_\epsilon^k$ of the form $\sum_{i=1}^Nn_i\delta_{a_i}$ as in the definition of $d_1$, for which
$$
d^*f_\epsilon^k = -\Sigma_\epsilon^k=d^*(\alpha_\epsilon+e_\epsilon^k) +h-\delta_0\int_{S^2}h.
$$
Therefore 
$$
h=d^*(\alpha_\epsilon +e_\epsilon^k)+\Sigma_\epsilon^k -\delta_0\int_{S^2}h.
$$
Thus $\alpha_\epsilon+e_\epsilon^k$ are competitors in the infimum defining $d_1(h_1,h_2)$, and as $k\to\infty,\epsilon\to 0$, their $L^p$-norms converge to $d(h_1,h_2)$. This concludes the proof of $d=d_1$.\\
\end{proof}

\subsection{The distances $d_2$ and $d_3$} A possible choice for the set $A$ in the definition of $d_2$ (if we interpret $A$ as the set on which $d^*\alpha$ ``avoids'' as much $L^p$-norm of $h_2-h_1$ as possible) could be some neighborhood of a superlevelset of $|h_2-h_1|$, which gives us a third distance $d_3$:
$$
d_2(h_1,h_2)\leq \lim_{k\to\infty}\inf\left\{||\alpha||_{L^p}:\:h_2-h_1=d^*\alpha\text{ whenever }|h_2-h_1|\leq k\right\}:=d_3(h_1,h_2).
$$
\begin{lemma}
 $d_2$ is a distance and for $h_1,h_2\in\mathcal Y$ there holds $d(h_1,h_2)\geq d_2(h_1,h_2)$.
\end{lemma}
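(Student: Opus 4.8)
The plan is to establish the metric axioms and the comparison $d_2\le d$ one at a time, leaving non-degeneracy for last since that is the only non-routine point. Symmetry is immediate: $(\alpha,A)$ is admissible for $(h_1,h_2)$ iff $(-\alpha,A)$ is admissible for $(h_2,h_1)$, with the same value of $\|\alpha\|_{L^p}$, so the $\epsilon$-level infima coincide and hence so do the limits. For the triangle inequality I would fix $\epsilon>0$, choose $(\alpha,A)$ and $(\beta,B)$ coming within $\delta$ of the $\epsilon$-level infima in the definitions of $d_2(h_1,h_2)$ and $d_2(h_2,h_3)$, and observe that $(h_3-h_1)-d^*(\alpha+\beta)=\big[(h_2-h_1)-d^*\alpha\big]+\big[(h_3-h_2)-d^*\beta\big]$ is supported in $A\cup B$, which is open with $|A\cup B|\le 2\epsilon$, while $\|\alpha+\beta\|_{L^p}\le\|\alpha\|_{L^p}+\|\beta\|_{L^p}$. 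Thus the $2\epsilon$-level infimum for $(h_1,h_3)$ is at most the sum of the two $\epsilon$-level infima, plus $2\delta$. Since the $\epsilon$-level infimum is non-increasing in $\epsilon$ the limits as $\epsilon\to 0$ are exactly the $d_2$'s, and letting $\epsilon,\delta\to 0$ gives $d_2(h_1,h_3)\le d_2(h_1,h_2)+d_2(h_2,h_3)$.

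For $d_2\le d$ I would repeat, with one extra observation, the argument that proves Proposition~\ref{eqdistances}. Given $\eta>0$, pick a $d$-competitor $h_2-h_1=d^*\alpha+\partial I+\sum_i n_i\delta_{a_i}$ with $\|\alpha\|_{L^p}\le d(h_1,h_2)+\eta$. As in that proof, $\sum_i n_i=\int_{S^2}(h_2-h_1)\in\mathbb Z$, so $\sum_i n_i\delta_{a_i}-\delta_0\int_{S^2}(h_2-h_1)$ is a zero-average finite atomic measure with integer weights, hence the boundary of an integer rectifiable $1$-current of finite mass; absorbing it into $\partial I$ reduces us to $h_2-h_1-\delta_0\int_{S^2}(h_2-h_1)-d^*\alpha=\partial I$. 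Solving the auxiliary equation $d^*dg=\pm\big((h_2-h_1)-\delta_0\int_{S^2}(h_2-h_1)\big)$ with the appropriate sign and $\int_{S^2}g=0$ gives $\|dg\|_{L^p}\le C\|h_2-h_1\|_{L^p}$ by elliptic theory and $d^*(\alpha+dg)=\partial I'$ for an integer $1$-current $I'$, so $\alpha+dg\in\mathcal V_{\mathbb Z}$; Proposition~\ref{density} then writes $\alpha+dg=f^k+e^k$ with $f^k\in\mathcal V_R$ smooth off a finite set and $e^k\to0$ in $L^p$. Exactly as in Proposition~\ref{eqdistances} this produces $h_2-h_1=d^*(\alpha+e^k)+\big(\text{finite atomic measure}\big)$, so $(h_2-h_1)-d^*(\alpha+e^k)$ is supported on a finite set $S^k$. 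The extra observation is that $S^k$ lies in open sets of arbitrarily small measure; hence for every $\epsilon>0$ the $\epsilon$-level infimum defining $d_2(h_1,h_2)$ is $\le\|\alpha+e^k\|_{L^p}\le\|\alpha\|_{L^p}+\|e^k\|_{L^p}$, which tends to $d(h_1,h_2)+\eta$. Letting $k\to\infty$ and then $\eta\to0$ yields $d_2\le d$.

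The main obstacle is non-degeneracy, $d_2(h_1,h_2)=0\Rightarrow h_1=h_2$, and this is where I expect the real work. Put $h:=h_2-h_1$ and suppose $d_2(h_1,h_2)=0$. Then there are $\alpha_k\to0$ in $L^p$ and open sets $A_k$ with $|A_k|\to0$ such that $h-d^*\alpha_k$ is supported in $\overline{A_k}$, that is, $h=d^*\alpha_k$ in $\mathcal D'(S^2\setminus\overline{A_k})$. Since $\|\alpha_k\|_{L^p}\to0$ we have $d^*\alpha_k\to0$ in $W^{-1,p}(S^2)$, so the distributions $\nu_k:=h-d^*\alpha_k$ are supported in the shrinking sets $\overline{A_k}$, are bounded in $W^{-1,p}$ (by $\|h\|_{L^p}+\|\alpha_k\|_{L^p}$ up to a constant), and converge to $h$ in $W^{-1,p}$. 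So the whole statement amounts to: an $L^p$ function cannot be a $W^{-1,p}$-limit of distributions concentrated on sets of measure tending to zero unless it vanishes; equivalently, testing against $\varphi\in C^\infty(S^2)$ one has $\langle h,\varphi\rangle=\langle d^*\alpha_k,\varphi\rangle+\langle\nu_k,\varphi\rangle$ with the first term $\to0$, and one must show $\langle\nu_k,\varphi\rangle\to0$. I would localize, writing $\langle\nu_k,\varphi\rangle=\langle\nu_k,\varphi\zeta_k\rangle$ for a cut-off $\zeta_k\equiv1$ on a neighbourhood of $\overline{A_k}$ supported on a set of measure $\le 2|A_k|$, so that $|\langle\nu_k,\varphi\rangle|\le\|\nu_k\|_{W^{-1,p}}\,\|\varphi\zeta_k\|_{W^{1,p'}}\le C\big(|A_k|^{1/p'}+\|\nabla\zeta_k\|_{L^{p'}}\big)\|\varphi\|_{C^1}$. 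The genuine difficulty is that $\|\nabla\zeta_k\|_{L^{p'}}$ — equivalently the $p'$-capacity of $\overline{A_k}$ — need not be small just because $|A_k|\to0$ (many tiny well-separated balls are the obstruction). Closing the argument requires a real-analysis input, in the spirit of Lemma~\ref{kirchlemma}, either estimating $\langle\nu_k,\varphi\rangle$ directly by a capacitary/measure-theoretic bound adapted to $\nu_k$, or first replacing each $A_k$ by a comparable open set of small $p'$-capacity without spoiling the relation $h=d^*\alpha_k'$ on its complement. That reduction, and the ensuing conclusion $\langle h,\varphi\rangle=0$ for all $\varphi$, hence $h\equiv0$, is the heart of the lemma.
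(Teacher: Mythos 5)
Your treatment of symmetry and the triangle inequality is fine, and your proof of $d\ge d_2$ is correct; it is in substance the same route the paper takes, which simply cites Proposition \ref{eqdistances} ($d=d_1$) and then covers the finitely many atoms of a $d_1$-competitor by an open set of small measure, whereas you re-run the proof of Proposition \ref{eqdistances} via Proposition \ref{density} instead of quoting it. The genuine gap is non-degeneracy: you explicitly leave $d_2(h_1,h_2)=0\Rightarrow h_1=h_2$ unproved, reducing it to a capacitary statement that you acknowledge you cannot verify. Your diagnosis of the obstruction is accurate: in the relevant range $1<p<3/2$ one has $p'>2$, so on $S^2$ even a single point has positive $W^{1,p'}$-capacity and open sets of small Lebesgue measure need not be cheap to cut off; the distribution hidden in $A_k$ (e.g.\ Dirac masses) cannot be discarded by testing. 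So, as it stands, the lemma is not proved by your argument.

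You should also be aware that the missing ``real-analysis input'' you hope for cannot exist for $d_2$ as literally defined, because the support constraint, unlike the definitions of $d$ and $d_1$, retains no integrality on what is hidden in $A$. Partition $S^2$ into cells $Q_i$ of diameter $\delta$, pick $x_i\in Q_i$, set $c_i=\int_{Q_i}(h_2-h_1)$, and solve in each cell the Neumann problem $\Delta u_i=(h_2-h_1)\chi_{Q_i}-c_i\delta_{x_i}$ with vanishing normal derivative; with the appropriate sign convention the $1$-form $\alpha_\delta:=\sum_i du_i$ satisfies $(h_2-h_1)-d^*\alpha_\delta=\sum_i c_i\delta_{x_i}$, a distribution supported on a finite set, while scaling gives $\|\alpha_\delta\|_{L^p}\le C\delta\|h_2-h_1\|_{L^p}$ (the Green-function part of each cell costs $|c_i|\,\delta^{2/p-1}\le C\delta\|h_2-h_1\|_{L^p(Q_i)}$ since $p<2$, the regular part costs $C\delta\|h_2-h_1\|_{L^p(Q_i)}$ by scaling). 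Hence for every $\epsilon>0$ the $\epsilon$-level infimum vanishes, i.e.\ the non-integer atoms $c_i\delta_{x_i}$ do exactly the damage you feared, and non-degeneracy fails unless the definition is strengthened (for instance by requiring the part supported in $A$ to be an integer atomic measure, which essentially brings one back to $d_1$). The paper's own one-line justification via ``the Lebesgue continuity property of $L^p$-forms'' does not address this point, so the correct conclusion of your analysis is not that a cleverer cutoff or capacity bound is needed, but that only the pseudometric properties and the inequality $d\ge d_2$ can be established for $d_2$ as written.
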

\begin{proof} 
The inequality $d(h_1,h_2)\geq d_2(h_1,h_2)$ follows easily once we know from Proposition \ref{eqdistances} that $d_1=d$, since we can take as the set $A$ a small neighborhood of the singularities in the definition of $d_1$. In particular, it follows that $d_2(h_1,h_2)=0\Leftarrow h_1=h_2$. Being the triangular inequality and the symmetry evident for $d_2$, and since $d_2(h_1,h_2)=0\Rightarrow h_1=h_2$ follows directly from the Lebesgue continuity property of $L^p$-forms, we deduce that $d_2$ is indeed a distance.
\end{proof}

The other inequalities are still to be investigated:
\begin{openprob}
 Is it true that $d=d_2=d_3$?
\end{openprob}

\begin{rmk}\label{rembcs}
We mention here an interesting analogy. A simpler distance similar to $d_2$ was studied in \cite{BCS}, where for probability measures $\mu_1,\mu_2$ on $\Omega\subset\mathbb R^n$ bounded open with smooth boundary the following distance was defined:
$$
D_{\mathcal H}(\mu_1,\mu_2)=\inf_{\sigma\in L^p(\Omega,\mathbb R^n)}\left\{\int_\Omega \mathcal H(\sigma(x))dx:d^*\sigma=\mu_1-\mu_2,\sigma\cdot\nu=0\text{ on }\partial\Omega\right\},
$$
for a class of functions $\mathcal H$ including the case $\mathcal H(x)=|x|^p, p>1$. The connection between our distances and the class of distances $D_{\mathcal H}$ would give an interesting connection to the theory of Optimal Transportation, which would strongly echo with the use of basic Optimal Transportation for ``minimal connections'' connecting singularities of harmonic maps in \cite{Breziscoronlieb}.
\end{rmk}

\section{Regularity and relation to the sequential weak convergence of slices}\label{regandconv}
\subsection{The distance metrizes weak convergence on bounded sequences}
\begin{lemma}
 If for functions $h_n,h_*,h_\infty\in \mathcal Y$ we have that $||h_n||_{L^p}$ is bounded, $d(h_*,h_n)\to 0$ and $h_n\rightharpoonup h_\infty$ weakly in $L^p$, then $h_*=h_\infty$.
\end{lemma}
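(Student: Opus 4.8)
The plan is to show that the two a priori different limits $h_*$ (limit in the slice distance $d$) and $h_\infty$ (weak $L^p$-limit) must agree, by testing their difference against smooth $1$-forms and exploiting the structure of the distance $d$. First I would recall that from $d(h_*,h_n)\to 0$ and Proposition \ref{eqdistances} (so that $d=d_1$) we obtain, for each $n$, a decomposition
$$
h_n-h_*=d^*\alpha_n+\sum_{i=1}^{N_n}n_i^{(n)}\delta_{a_i^{(n)}},\qquad \|\alpha_n\|_{L^p}\to 0 .
$$
The key observation is that the error terms $d^*\alpha_n$ and the Dirac sum act very differently on a \emph{closed} test form: if $\omega$ is a smooth closed $1$-form on $S^2$ then $\int_{S^2}(d^*\alpha_n)\wedge\omega=0$ would hold if $\omega$ were coclosed as well, but more robustly, since $H^1(S^2)=0$, every smooth closed $1$-form is exact, $\omega=d\psi$, and the Dirac sum paired with $d\psi$ still need not vanish; so closed forms do not immediately separate the two pieces. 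The cleaner route is to pair with $d\psi$ for $\psi\in C^\infty(S^2)$: then $\langle d^*\alpha_n,d\psi\rangle=\langle\alpha_n,d d\psi\rangle=0$ in the distributional sense appropriate here — more precisely $\int_{S^2}\alpha_n\wedge d(d\psi)=0$ — and $\langle\delta_{a_i^{(n)}},d\psi\rangle$ does not vanish, so this pairing only sees the singular part, which is the wrong way around.

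Hence the actual strategy I would pursue is to test with an arbitrary smooth function $\phi\in C^\infty(S^2)$, i.e. pair the $2$-forms (functions, via $\mathcal H^2$) $h_n-h_*$ against $\phi$. Weak $L^p$-convergence $h_n\rightharpoonup h_\infty$ gives $\int_{S^2}(h_n-h_*)\phi\,d\mathcal H^2\to\int_{S^2}(h_\infty-h_*)\phi\,d\mathcal H^2$. On the other hand, using the decomposition above, $\int_{S^2}(h_n-h_*)\phi=\int_{S^2}(d^*\alpha_n)\phi+\sum_i n_i^{(n)}\phi(a_i^{(n)})$. The first term is $\int_{S^2}\alpha_n\wedge d\phi$ up to sign, which is bounded by $\|\alpha_n\|_{L^p}\|d\phi\|_{L^{p'}}\to 0$. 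So the whole difficulty is concentrated in the Dirac sum: I would want to choose a \emph{well-chosen} class of test functions $\phi$ — for instance $\phi$ with $d\phi$ supported where it helps, or better, test against $\phi$ constant near each $a_i$ — but since the atoms $a_i^{(n)}$ move with $n$ this is delicate. The resolution I expect to use: since $h_n\in\mathcal Y$ and $h_*\in\mathcal Y$ both have integer integral over $S^2$, $\int_{S^2}(h_n-h_*)=\sum_i n_i^{(n)}\in\mathbb Z$; and picking $\phi\equiv 1$ gives that this integer equals $\int_{S^2}(h_\infty-h_*)$ in the limit. That pins down one scalar but not the full identity.

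The honest main obstacle, then, is controlling the measure-valued part $\mu_n:=\sum_i n_i^{(n)}\delta_{a_i^{(n)}}=h_n-h_*-d^*\alpha_n$. It is a sum of Dirac masses, but with no a priori bound on $N_n$ or on $\sum_i|n_i^{(n)}|$, so it need not be bounded in total variation and can spread or concentrate arbitrarily. The point I would exploit to close the argument is that $\mu_n=h_n-h_*-d^*\alpha_n$ is, on one hand, a measure supported on a finite set, and on the other hand $h_n-h_*$ is bounded in $L^p$ (hence weakly precompact, with limit $h_\infty-h_*\in L^p$) while $d^*\alpha_n\to 0$ in $W^{-1,p}$. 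Therefore $\mu_n\to h_\infty-h_*$ in the sense of distributions. But a distributional limit of purely atomic measures, which also happens to be an $L^p$ function, combined with the fact that $\mu_n$ carries \emph{no} continuous/$L^p$ part at all — that is the crux. I would argue as follows: test $\mu_n$ against $\phi\in C^\infty(S^2)$; since $\mu_n=(h_n-h_*)-d^*\alpha_n$, we get $\langle\mu_n,\phi\rangle\to\int_{S^2}(h_\infty-h_*)\phi$ for every $\phi$, so $\mu_n\rightharpoonup h_\infty-h_*$ as distributions, hence as measures with the limit being the absolutely continuous measure $(h_\infty-h_*)\,d\mathcal H^2$. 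Now apply the distance $d$ in the other direction: by the triangle inequality and the definition of $d$ (taking $\alpha=0$, $I=0$ and the atomic part equal to $\mu_n$ when it is genuinely atomic), one sees that $d(h_n,h_*)=0$ would force $h_n=h_*$ only if there were no atoms — which is exactly the content needed. Concretely, I expect to invoke the already-established fact (used in the proof that $d_2$ is a distance) that the map $h\mapsto\|h\|_{L^p}$ interacts with $d$ so that $d(h_*,h_n)\to 0$ together with the weak convergence forces the $L^p$ weak limit to be $h_*$: indeed, for any $\phi\in C^\infty(S^2)$,
$$
\Big|\int_{S^2}(h_\infty-h_*)\phi\,d\mathcal H^2\Big|=\lim_n\Big|\int_{S^2}(h_n-h_*)\phi\,d\mathcal H^2\Big|\le\lim_n\big(\|\alpha_n\|_{L^p}\|d\phi\|_{L^{p'}}+|\langle\mu_n,\phi\rangle|\big),
$$
and the remaining task is to show $\langle\mu_n,\phi\rangle\to 0$ for a dense-enough family of $\phi$, which I would get by choosing $\phi$ to also have small $L^{p'}$-mass concentrated away from a fixed point and using that $\langle\mu_n,\phi\rangle=\int(h_n-h_*)\phi-\int\alpha_n\wedge d\phi$ together with equi-integrability of $\{h_n-h_*\}$ in $L^p$: if $\phi$ is supported on a set of small measure, $\int(h_n-h_*)\phi$ is uniformly small, forcing $\mu_n$ to have no mass there in the limit, and since this holds for all such small sets, the limit of $\mu_n$ has no atoms, i.e. equals its $L^p$-density $h_\infty-h_*$; but $\mu_n$ being atomic makes $\langle\mu_n,\phi\rangle$ for $\phi$ smooth compatible only with a limit that is itself atomic, so $h_\infty-h_*=0$. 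Making this last dichotomy fully rigorous — atomic measures cannot converge weakly-$*$ to a nonzero $L^p$ function while simultaneously being forced to vanish on every small-measure set — is where I would spend the real effort, likely via a covering/Vitali argument on $S^2$.
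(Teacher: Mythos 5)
Your reduction via $d=d_1$ to a decomposition $h_n-h_*=d^*\alpha_n+\mu_n$ with $\|\alpha_n\|_{L^p}\to 0$ and $\mu_n$ a finite integer combination of Dirac masses is fine, and so is the observation that consequently $\mu_n\to h_\infty-h_*$ in $\mathcal D'(S^2)$. The genuine gap is the dichotomy you lean on at the end: it is false that purely atomic measures with integer coefficients (and no mass bound) can only converge distributionally to an atomic limit, and hence false that such a limit which happens to be an $L^p$ function must vanish. A one-dimensional model shows why no equi-integrability or Vitali-covering argument can close this: let $u_N:S^1\to\{0,1,2\}$ be integer-valued step functions oscillating finely so that $u_N\rightharpoonup 1-\cos\theta$ weakly-$*$; then $\mu_N:=u_N'$ is a finite sum of integer Dirac masses, yet $\mu_N\to\sin\theta$ in $\mathcal D'(S^1)$, a nonzero smooth function. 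Your intermediate tests (pairing with $\phi\equiv 1$, or with $\phi$ supported on small-measure sets) only re-establish that the limit of $\mu_n$ is absolutely continuous and has integer total mass; they never exploit the integrality of the coefficients in a way that forces the limit to be zero, so the conclusion $h_*=h_\infty$ does not follow from what you wrote.

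The paper closes exactly this gap by converting the integrality of $\Sigma_n=\mu_n$ into topological data with an actual compactness statement behind it. Introducing potentials $\Delta\phi_n=d^*\alpha_n$ and $\Delta\psi_n=h_*-h_n$ and setting $v_n=d(\psi_n-\phi_n)$, one gets $d^*v_n=\Sigma_n$ together with a uniform bound $\|v_n\|_{L^q}\le C$ for $q<p$ (this is where the hypothesis $\|h_n\|_{L^p}\le C$ is used quantitatively, not merely through equi-integrability). The main theorem of \cite{P1} then lifts $v_n$ to circle-valued maps $u_n\in W^{1,q}(S^2,S^1)$ with $u_n^*\theta=v_n$; compactness at the level of the $u_n$ and a Sard-type argument, as in Proposition 3.4 of \cite{PR1}, show that $h_\infty-h_*$ is the boundary of an integer rectifiable $1$-current of finite mass, and such a boundary which is also an $L^p$ function must vanish. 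Some substitute for this step --- the lifting to $S^1$-valued maps, or a flat-chain/current argument with a mass bound --- is indispensable, and it is precisely what your proposal is missing.
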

\begin{proof}
 We observe that from the weak convergence it follows $h_n\stackrel{L^q}{\to}h_\infty$ for all $q<p$. We define then the potentials $\tilde \psi_n, \psi_\infty$ by 
$$
\left\{\begin{array}{l} h_n = \Delta \tilde\psi_n,\int\psi_n=0\\h_\infty=\Delta\psi_\infty,\int\psi_\infty=0\end{array}\right.
$$
and we observe that $\bar\psi_n=\tilde\psi_n-\psi_\infty$ satisfies $\Delta\bar\psi_n\stackrel{L^q}{\to}0$ and $||d\bar\psi_n||_{W^{1,q}}\to 0$.\\

Now take $1$-forms $\alpha_n$ such that
$$
h_n-h_*=d^*\alpha_n +\Sigma_n,\quad ||\alpha_n||_p\to 0
$$
where $\Sigma_n$ is a sum of Dirac masses with integer coefficients, and let $\Delta\phi_n=d^*\alpha_n$. Call also 
$$
\Delta\psi_n=h_*-h_n=h_*-h_\infty +\Delta\bar\psi_n:=\Delta\psi +\Delta\bar\psi_n,
$$
and observe that $||d\psi||_{W^{1,q}}$ is bounded and $||d\phi_n||_p\leq C||\alpha_n||_p\to 0$. Then
$$
\Delta(\psi_n - \phi_n)=h_n -h_* - d^*\alpha_n =\Sigma_n,
$$
and denoting $v_n=d(\psi_n-\phi_n)$ we obtain
$$
\left\{\begin{array}{l}d^*v_n = \Sigma_n\\||v_n||_q\leq C(||\alpha_n||_q+||d\psi||_q+||d\bar\psi_n||_q)\leq C\end{array}\right.
$$
therefore (see the main theorem of \cite{P1}) we can find $u_n\in W^{1,q}(S^2,S^1)$ such that $u_n^*\theta=v_n$, where $\theta$ is the normalized volume $1$-form of $S^1$. The end of the proof goes as in \cite{PR1}, Proposition 3.4, where at the level of the $u_n$ it is possible to find a converging subsequence, and by Sard theorem it is concluded that for a rectifiable $1$-current of finite mass $I_0$ there holds $\partial I_0=h_n-h_*$, thus $h_n=h_*$. See \cite{PR1} for the details.
\end{proof}

\begin{proposition}\label{metrizes}
 If $h_n\in \mathcal Y$ are equibounded in $L^p$, then 
$$
h_n\stackrel{d}{\to}h_*\Leftrightarrow h_n\stackrel{w-L^p}{\rightharpoonup}h_*.
$$
\end{proposition}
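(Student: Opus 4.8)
The plan is to prove the two implications separately, using the previous lemma for the hard direction and a soft compactness argument combined with that same lemma for the easy one.

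First I would treat the implication $h_n\stackrel{d}{\to}h_*\Rightarrow h_n\stackrel{w\text{-}L^p}{\rightharpoonup}h_*$. Since the $h_n$ are equibounded in $L^p$ with $p>1$, the sequence is weakly precompact; let $h_\infty$ be the weak limit of any subsequence. Along that subsequence we still have $d(h_*,h_n)\to 0$ and $h_n\rightharpoonup h_\infty$, so the previous lemma (with $h_n$ equibounded in $L^p$, $d(h_*,h_n)\to0$, $h_n\rightharpoonup h_\infty$) forces $h_*=h_\infty$. Hence every weakly convergent subsequence has the same limit $h_*$, and by the subsequence principle the full sequence satisfies $h_n\rightharpoonup h_*$.

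For the converse, $h_n\stackrel{w\text{-}L^p}{\rightharpoonup}h_*\Rightarrow h_n\stackrel{d}{\to}h_*$, I would argue again by contradiction plus extraction. If $d(h_*,h_n)\not\to 0$, pick $\delta>0$ and a subsequence (not relabeled) with $d(h_*,h_n)\geq\delta$. This subsequence is still equibounded in $L^p$ and still converges weakly to $h_*$. The goal is to produce, for this subsequence, a competitor in the infimum defining $d(h_*,h_n)$ with $L^p$-norm tending to $0$, contradicting $d(h_*,h_n)\ge\delta$. Concretely, set $w_n:=h_*-h_n$, which converges weakly to $0$ and strongly to $0$ in every $L^q$ with $q<p$. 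Since $\int_{S^2}h_*=\int_{S^2}h_n\in\mathbb Z$, the difference $w_n$ has zero mean, so one can solve $d^*\beta_n = w_n$ with the elliptic estimate $\|\beta_n\|_{L^q}\le C\|w_n\|_{L^q}\to 0$ for fixed $q<p$; this already shows $w_n = d^*\beta_n + 0$ with no Dirac part, so $d_q$-type quantities go to zero, but we need the $L^p$-norm, not $L^q$. To upgrade, I would use the interpolation/equiboundedness: $\beta_n$ is bounded in $W^{1,p}$ by elliptic regularity (from $\|w_n\|_{L^p}\le C$), hence bounded in $L^{p^*}$ and, interpolating between the $L^q\to0$ bound and the $L^{p}$-boundedness, $\|\beta_n\|_{L^p}\to 0$. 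Then $h_*-h_n = d^*\beta_n + \partial I + \sum n_i\delta_{a_i}$ with $I=0$, $N=0$, and $\|\beta_n\|_{L^p}\to0$ shows $d(h_*,h_n)\to 0$, the desired contradiction.

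The main obstacle is this last interpolation step: the elliptic solution $\beta_n$ of $d^*\beta_n=w_n$ is only canonically controlled in $L^q$ for $q<p$ (since that is where $w_n\to0$ strongly), whereas the distance $d$ is measured in $L^p$, exactly the norm in which $w_n$ need not converge. One must exploit that $\beta_n$ is simultaneously bounded in $W^{1,p}$ (hence in some $L^{p^*}$ with $p^*>p$ when the dimension permits, or at least bounded in $L^p$ with a good higher-integrability gain) and small in $L^q$; the quantitative interpolation inequality $\|\beta_n\|_{L^p}\le\|\beta_n\|_{L^q}^{\theta}\|\beta_n\|_{L^{p^*}}^{1-\theta}$ then closes the gap. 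If the dimensional gain is not available one instead truncates: split $\beta_n$ according to $\{|\beta_n|>M\}$ and its complement, use the $L^q$-smallness on the complement and the $L^p$-equiboundedness plus Chebyshev on the large set, and let $M$ and then $n$ go to the appropriate limits. Either way the geometry of $S^2$ (a fixed compact surface) makes all the elliptic estimates standard, and the subsequence-extraction bookkeeping is routine, so the heart of the argument is converting weak-$L^p$ smallness of $w_n$ into $L^p$-smallness of the gauge potential $\beta_n$ absorbing it.
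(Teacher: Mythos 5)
Your forward implication is exactly the paper's argument: by $L^p$-equiboundedness every subsequence has a weakly convergent sub-subsequence, the preceding lemma identifies each such weak limit with $h_*$, and the subsequence principle gives $h_n\rightharpoonup h_*$ for the full sequence. No issue there.

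The converse, however, contains a genuine gap. You assert that $w_n:=h_*-h_n\rightharpoonup 0$ weakly in $L^p$ implies $w_n\to 0$ strongly in every $L^q$ with $q<p$. This is false: an oscillating sequence (say $h_n=h_*+v_n$ with $v_n$ alternating between $+1$ and $-1$ on regions of ever finer scale, corrected to keep integer degree) converges weakly to $h_*$ in every $L^p$ while $\|w_n\|_{L^q}$ stays bounded away from zero for every $q$. Both your interpolation route ($\|\beta_n\|_{L^p}\le\|\beta_n\|_{L^q}^{\theta}\|\beta_n\|_{L^{p^*}}^{1-\theta}$) and your truncation alternative take the $L^q$-smallness of $\beta_n$, inherited from this false claim via $\|\beta_n\|_{L^q}\le C\|w_n\|_{L^q}$, as their essential input, so the conclusion $\|\beta_n\|_{L^p}\to 0$ does not follow as written. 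The correct mechanism, which is what the paper does, is that the smallness of the potential comes from the compactness of the solution operator, not from strong convergence of the data in a weaker norm: writing $\beta_n=dg_n$ with $\Delta g_n=w_n$, $\int_{S^2}g_n=0$ (solvable for large $n$ because $\int h_n\to\int h_*$ by testing against $1$, and both are integers, hence eventually equal), elliptic estimates give a uniform $W^{1,p}(S^2)$ bound on $\beta_n$; Rellich--Kondrachov then extracts a subsequence converging strongly in $L^p$ (the Sobolev exponent $p^*=2p/(2-p)$ exceeds $p$ here), and the weak convergence $w_n\rightharpoonup 0$ forces the limit potential to be harmonic with zero mean, hence zero. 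Since this limit does not depend on the subsequence, the whole sequence satisfies $\|\beta_n\|_{L^p}\to 0$, and $h_n-h_*=d^*\beta_n$ (with no current and no Dirac part) gives $d(h_n,h_*)\to 0$ directly, with no need for your contradiction setup. You already have the two ingredients of this fix in your sketch (the $W^{1,p}$ bound and the compact embedding); the step that must be replaced is precisely the one you flagged as the ``main obstacle,'' since as formulated it rests on an incorrect strong-convergence claim.
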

\begin{proof}
 Using the fact that a sequence has a limit $h_*$ if and only if each subsequence has a subsequence converging to $h_*$ and the previous lemma, we obtain immediately the ``$\Rightarrow$'' implication.\\
Suppose now $ h_n\stackrel{w-L^p}{\rightharpoonup}h_*$. Then take the potential such that $\Delta\psi_n=h_n-h_\infty$. By the elliptic estimates and the Rellich-Kondrachov theorem, after extracting a subsequence, $d\psi_{\bar n}\to0$ in $L^{p^*}$. The limit is zero independent of the subsequence, so $\alpha_n=d\psi_n$ satisfies 
$$
\left\{\begin{array}{l}h_n-h_\infty=d^*\alpha_n,\\ ||\alpha_n||_p\to0\end{array}\right.
$$
which implies $h_n\stackrel{d}{\to}h_*$.
\end{proof}

\subsection{Regularity on slices}
Here we consider a form $ F\in\mathcal F_{\mathbb Z}^p(\Omega)$ and we want
to compare its slices along $\partial B(x,r),\partial B(x',r')\subset\Omega$.
This will be possible only under some condition on $|x-x'|,r-r'$m but we
formulate the condition later.\\

The slices will be given by a function (defined a.e.) $h:\Omega\times\mathbb
R^+\to \mathcal Y\subset L^p(S^2)$, where $h(x,r)$ is the function on $S^2$
corresponding to the restriction of $ F$ to $\partial B(x,r)$, after
a homothety and an identification $\wedge^2S^2\simeq \wedge^0S^2$.\\

We consider the following function $A:S^2\times[0,1]\to \Omega$: 
$$
A(\sigma,t)=t(x-x')+x' +\left[t(r-r') +r'\right]\sigma:=x_t+r_t\sigma.
$$
Suppose that $A$ is a diffeomorphism onto its image. Then 
$A^* F\in \mathcal F_{\mathbb Z}(S^2\times[0,1])$, and we will build out of
it a competitor for the infimum in the definition of $d(h(x,r),h(x',r'))$. 
Consider
$$\bar F(\sigma)=\frac{1}{|r-r'|}\int_0^1
 F_{x_t,r_t}^\parallel(\sigma)dt,\quad\text{where
} F_{x_t,r_t}(\sigma):=r_t^2 F(x_t+\sigma r_t).
$$
Here $F_{x_t,r_t}^\parallel$ indicates the component of $F_{x_t,r_t}$ parallel to the volume form of the sphere $\partial B_{r_t}(x_t)$. Along the lines of Proposition 4.1 of \cite{PR1} (See also Proposition \ref{densitykessel} and Figure \ref{fig:slices} in the Introduction), we can show that this gives a
competitor. We then introduce the reparameterization $\rho=r_t$ and we compute:
\begin{eqnarray*}
 \int_{S^2} |\bar F|^p(\sigma)d\sigma
&=&
\int_{S^2}\left(\int_{r'}^r F_{x_\rho,\rho}
^\parallel(\sigma)d\rho\right)^pd\sigma\\
&\leq&|r-r'|^{1-\frac1p}\int_{r'}^r\int_{S^2}| F_{x_\rho,\rho}|^pd\sigma
d\rho.
\end{eqnarray*}
In order to compare this with the norm of $ F$, we first find
$$
DA(\sigma,t)=\left([t(r-r')+r']Id_{TS^2}|x-x' +
(r-r')\sigma\right)=\left(r_tId_{TS^2}|x-x' + (r-r')\sigma\right).
$$
Then (assuming $B'\subset B$ for the moment) we pull back the function
$| F|^p$:
\begin{eqnarray*}
\int_{B\setminus B'}| F|^pd\mathcal H^3&=&\int_{A^{-1}(B\setminus
B')}| F|^p\circ A |DA|\\
&=&\int_0^1\int_{S^2}| F(x_t+r_t\sigma)|^pr_t^2|r-r'+\langle\sigma,
x-x'\rangle|d\sigma dt.
\end{eqnarray*}
 We can
now formulate our hypothesis on the slices:
$$
(H)\quad |x-x'|\leq \frac{1}{2}(r-r'), 1\geq r>r'.
$$
Under this hypothesis, (since $|\sigma|=1$) we can estimate
\begin{eqnarray*}
 \int_{B\setminus B'}| F|^pd\mathcal
H^3&\geq&\frac12(r-r')\int_0^1\frac{1}{r_t^{2p-2}}\left(\int_{S^2}| F_{x_t,
r_t }|^pd\sigma\right) dt\\
&=&\frac12 \int_{r'}^r\frac{1}{\rho^{2p-2}}\left(\int_{S^2}| F_{x_\rho,\rho
}|^pd\sigma\right) d\rho\\
&\geq&\frac12 \int_{r'}^r\left(\int_{S^2}| F_{x_\rho,\rho
}|^pd\sigma\right) d\rho\quad\text{ if }\rho\leq 1.
\end{eqnarray*}
Observe that $ F_{x_\rho,\rho}$ is the Poincar\`e dual of $h(x_\rho,\rho)$.
We introduce the function 
$$
F:\Omega\times \mathbb R^+\stackrel{h}{\to}
\mathcal Y\stackrel{||\cdot||^p_{L^p}}{\to}\mathbb R^+,
$$ 
and we can finally  write
\begin{eqnarray*}
d(h(x,r),h(x',r'))
&\leq&|r-r'|^{1-\frac{1}{p}}\left(\int_{r'}^rF(x_\rho,
\rho)\right)^ { 1/p }\\
\text{(Under hypotheses
(H)..)}&\leq& 2|r-r'|^{1-\frac{1}{p}}\left(\int_{B\setminus
B'}| F|^p\right)^{1/p}.
\end{eqnarray*}
Combining the basic estimate above for a couple of segments, we obtain
H\"olderianity.

\begin{proposition}\label{holder}
 The slice-function $h: \mathcal A:=B_{\frac12}\times \mathbb
R^+\cap\{(x,r):\:B(x,r)\subset B_1\}\to L^p(S^2)$ defined above is
H\"older-$(1-1/p)$-continuous on with respect to the distance $d$, and its
H\"older constant is bounded by the $L^p$-norm of $ F$. 
\end{proposition}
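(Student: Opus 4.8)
The plan is to bootstrap the \emph{basic estimate} obtained in the computation preceding the statement: under hypothesis $(H)$ one has
$$
d(h(x,r),h(x',r')) \;\le\; 2\,|r-r'|^{1-\frac1p}\Big(\int_{B(x,r)\setminus B(x',r')}|F|^p\Big)^{1/p} \;\le\; 2\,|r-r'|^{1-\frac1p}\,\|F\|_{L^p(B_1)}.
$$
Since $(H)$ only compares slices on strongly nested balls, the point is to join any two parameter points $P_0=(x_0,r_0)$ and $P_1=(x_1,r_1)$ in $\mathcal A$ by a chain of a uniformly bounded number of segments, each satisfying $(H)$ (after exchanging its endpoints if necessary, which is harmless since $d$ is symmetric), whose radius-increments sum to a quantity comparable to $|x_0-x_1|+|r_0-r_1|$, and then to add the basic estimate along the chain using the triangle inequality for $d$. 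Write $a=|x_0-x_1|$, $b=|r_0-r_1|$ and $x_m=\tfrac12(x_0+x_1)$.

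First I would dispose of the case $a\le\tfrac12 b$: there $(H)$ holds directly for the pair $\{P_0,P_1\}$ (the requirement $1\ge\max(r_0,r_1)$ being automatic on $\mathcal A$), so the basic estimate already gives $d(h(P_0),h(P_1))\le 2b^{1-\frac1p}\|F\|_{L^p(B_1)}$. In the remaining case $a>\tfrac12 b$ I would interpolate through a single auxiliary ball centered at $x_m$: either the slightly \emph{smaller} ball of radius $R=\min(r_0,r_1)-a$, when this number is positive, or the slightly \emph{larger} ball of radius $R=\max(r_0,r_1)+a$. In either variant the choice of $R$ makes each pair $\{P_i,(x_m,R)\}$ satisfy $(H)$ (the inequality $\tfrac a2=|x_i-x_m|\le\tfrac12|r_i-R|$ reducing to $r_i\ge\min(r_0,r_1)$, resp. $r_i\le\max(r_0,r_1)$), while $|r_i-R|\le a+b$; so once the auxiliary point is checked to lie in $\mathcal A$, the triangle inequality gives $d(h(P_0),h(P_1))\le 4(a+b)^{1-\frac1p}\|F\|_{L^p(B_1)}$.

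The delicate point, and the main obstacle, is that neither auxiliary ball is unconditionally available inside $\mathcal A$: the smaller one exists only when $\min(r_0,r_1)>a$ (it degenerates when both radii are tiny), while the larger one, $B(x_m,R)$, must still be contained in $B_1$, which can fail when $P_0$ or $P_1$ lies near the outer face $\{|x|+r=1\}$ of $\mathcal A$. One has to observe that these two obstructions cannot occur together once $a$ is small: if $\min(r_0,r_1)\le a$, then $a>\tfrac12 b$ forces $\max(r_0,r_1)<3a$, hence $R<4a$ and therefore $|x_m|+R<\tfrac12+4a\le 1$ as soon as $a\le\tfrac18$. This settles every pair with $a\le\tfrac18$.

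For pairs with $a>\tfrac18$, and hence $|x_0-x_1|+|r_0-r_1|\ge\tfrac18$, the claimed bound follows instead from a \emph{crude uniform estimate} $d\big(h(x,r),h(0,\rho_0)\big)\le C_0\,\|F\|_{L^p(B_1)}$, valid for every $(x,r)\in\mathcal A$, where $\rho_0$ is a fixed small number (say $\rho_0=\tfrac1{100}$) and $C_0$ is universal; this in turn comes from applying the basic estimate along the three-segment path $(x,r)\to(x,\rho_0)\to(\tfrac x2,\,\rho_0+|x|)\to(0,\rho_0)$, all of whose vertices lie in $\mathcal A$ and consecutive pairs of which satisfy $(H)$, followed by the triangle inequality. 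Collecting the four cases yields $d(h(P_0),h(P_1))\le C\,\|F\|_{L^p(B_1)}\,(|x_0-x_1|+|r_0-r_1|)^{1-\frac1p}$ with a universal constant $C$, which is precisely the asserted H\"older-$(1-\tfrac1p)$-continuity, with H\"older constant bounded by $\|F\|_{L^p(B_1)}$.
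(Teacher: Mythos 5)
Your proof is correct and follows essentially the same route as the paper: the basic estimate under hypothesis $(H)$ is combined, via the triangle inequality for $d$, with a chain of boundedly many $(H)$-segments whose radius increments are controlled by the parameter distance $|P_0-P_1|$. The only difference is in the explicit path construction: where the paper sketches $M$-, $W$- or $V$-shaped polygonals of at most four segments, you interpolate through a single auxiliary ball at the midpoint (shrunk or enlarged, with the boundary obstruction handled for $a\le\frac18$) together with a crude three-segment bound for far-apart pairs, which makes the paper's ``easy (but tedious)'' verification explicit.
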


\begin{proof}
 We want to see how our estimates worsen if instead of connecting $B=(x,r),B'=(x',r')$ along a segment, we use a polygonal curve. Consider then $\gamma$, consisting in a union of segments $\{S\}$, each ow which satisfies (H). For a given segment $S=[\underline S,\overline S]$ (where $\overline S=(x',r')$ is the end with the largest radius) we call $A_S:=B_{\overline S}\setminus B_{\underline S}$. For a given $S$, call $|S_r|$ the difference of the radii of $\underline S,\overline S$. We then have the following estimate, following the same reasoning as above:
\begin{eqnarray*}
2|| F||_{L^p(A_S)}^p&\geq&\int_{\underline S}^{\overline S}\frac{1}{\rho^{2p-2}}F(s_\rho)d\rho\\
&\geq&\overline S^{2-2p}|S_r|^{p-1}d(h(\overline S),h(\underline S))^p,
\end{eqnarray*}
and summing up and using the triangle inequality, 
$$
2\#\{S\}|| F||_{L^p}\sum_{S\in\gamma}|S_r|^{1-\frac1p}\geq d(h(B),h(B')).
$$

Because of this estimate, the question is how we can join $B,B'$ by some polygonal $\gamma$ which stays in the allowed set $\mathcal A$ and is made of segments verifying (H), such that $\#\{S\}$ is as small as possible and $\sum_{S\in\gamma}|S_r|^{1-\frac1p}$ is bounded above.\\

We will see that $N$ can be bounded by $4$ because we don't need more than $4$ segments, and that $\max_{S\in\gamma}|S_r|$ is bounded  by $2 |B'-B|$ (also in this case it's optimal to have a few long segments rather than many short ones). We just briefly describe the kind of $\gamma$ we use for the estimates.\\

The worst case that we can face is the one where $B,B'$ are on $\partial \mathcal A$, have the same $r$-coordinate, and are as far from each other as possible. If they are on the part where $x,x'\in\partial B_{\frac12}$ with $r<\frac14$ then we can take $\gamma$ to start from $B$ and go up in the $r$-direction with slope $2$ until it touches $\partial\mathcal A$, then down until close to $0$ radius and center $x=0$, then do the same symmetrically, building up an $M$-shaped graph. If $r\geq \frac14$, then it's better to first go down then up, making a symmetric $W$-shaped graph. If instead $x,x'\in\partial\mathcal A\setminus\partial B_{\frac12}$ then again a $W$-shaped graph is the best option, and if $r$ is large enough a $V$-shaped graph will be even better.\\
It is easy (but tedious) to verify that the above constructions verify the estimate on $|S_r|$. We thus end up with the following bound:
$$
16|| F||_{L^p}|B-B'|^{1-\frac1p}\geq d(h(B),h(B'))
$$
\end{proof}
\begin{rmk}\label{distancefig}
 We observe that in general, even though $d$ is H\"older on the slices, Proposition \ref{metrizes} does not apply, to give weak continuity on the slices, because the norm boundedness is not verified. This is already clear in the case where the form $F$ is the radial form $F_x(V,W)=\frac{x}{|x|}\cdot V\times W$. Then consider the slices $S_{1+\rho}$ along $\partial B(1+\rho,(0,0,1)),\rho\in[-\epsilon,+\epsilon]$ (see Figure \ref{slicesunbound}). Since these spheres look almost flat near $(0,0,0)$ for small $\epsilon$ and the integral of $F$ on the portion of a given slice just depends on the solid angle covered by that region, we easily see that the $L^p$-norm of the slice $S_{1+\rho}$ on a small ball near the singularity grows like $\rho^{2-2p}$, i.e. blows up.
\end{rmk}
\begin{figure}[htp]
\centering
\scalebox{0.5}{\input{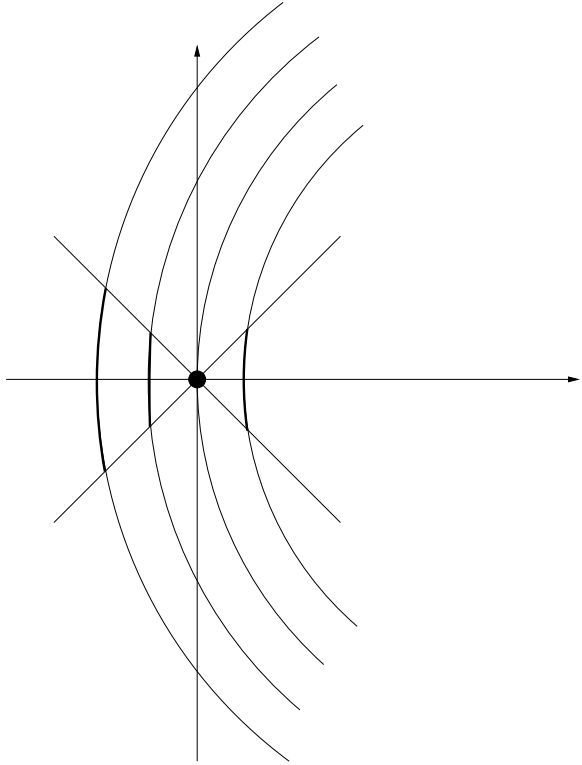}}
\caption{We represent schematically the slices passing near the origin. The areas of the thick regions behave like $\rho^2$ and the integral of $F$ on them is constant and positive, so $|i^*F|\sim \frac{1}{\rho^2}$ and the $L^p$-norms of the slices is thus $\gtrsim \rho^{2-2p}$.}\label{slicesunbound}
 
\end{figure}

\subsection{A simplified proof of the closure theorem}\label{simplif}
Propositions \ref{holder} and \ref{metrizes} allow a simplification of the proof of the Closure Theorem \ref{closureweakbdl}, which avoids using the stronger Theorem 5.1 of \cite{PR1}. We state here the crucial result from which Theorem \ref{closureweakbdl} follows at once, and we give a new proof of it.
\begin{lemma}[Main step of the Closure Theorem]
Let $p\in]1,3/2[$ as above. Suppose that the $2$-forms $F_n\in\mathcal F_{\mathbb Z}^p(\Omega)$ are weakly convergent to a $2$-form $F\in L^p(\Omega)$. Given $B_r(x)\subset\Omega$, consider the slice functions go $F$, $S:[r/2,r]\to L^p(S^2)$, given by $S(\rho):=i^*_{x,\rho}F$. Then for almost all $\rho\in[r/2,r]$, $S(\rho)\in \mathcal Y$, i.e. the integer flux condition is preserved.
\end{lemma}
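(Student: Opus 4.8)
The plan is as follows. Fix the ball $B_r(x)\subset\Omega$ of the statement. Via the vectorfield picture of Remark \ref{vfif} the claim is equivalent to showing that the numbers $J(\rho):=\int_{\partial B_\rho(x)}i^*F$ lie in $\mathbb Z$ for a.e.\ $\rho\in[r/2,r]$ (note that $\int_{S^2}S(\rho)$ equals this flux, the homothety identification being integral-preserving), while by hypothesis $J_n(\rho):=\int_{\partial B_\rho(x)}i^*F_n\in\mathbb Z$ for a.e.\ $\rho$ and all $n$, outside a single null set (a countable union). The first, elementary, point is that $J_n\rightharpoonup J$ weakly in $L^p([r/2,r])$: writing $X_n$ for the vectorfield of $F_n$, the coarea formula gives, for every $g\in C([r/2,r])$, $\int_{r/2}^r g(\rho)J_n(\rho)\,d\rho=\int_{B_r(x)\setminus B_{r/2}(x)}g(|y-x|)\,X_n(y)\cdot\tfrac{y-x}{|y-x|}\,dy$, whose right-hand side converges to the analogous expression for $X$ since $X_n\rightharpoonup X$ in $L^p$; as $\{J_n\}$ is bounded in $L^p([r/2,r])$ by Fubini and Jensen, weak $L^p$ convergence follows. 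Note that weak convergence alone is \emph{not} enough to conclude $J\in\mathbb Z$ a.e.\ (an oscillating sequence of integers converges weakly to a non-integer), so the hypothesis $F_n\in\mathcal F_{\mathbb Z}^p(\Omega)$ must enter through a rigidity property, and this is the role of Proposition \ref{holder}.

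The next step is to upgrade the weak convergence to an a.e.\ statement on the slices themselves. Concentric spheres trivially satisfy hypothesis $(H)$, so Proposition \ref{holder} applied to each $F_n$ shows that the slice maps $\rho\mapsto S_n(\rho)$ are \emph{equi}-$(1-\tfrac1p)$-H\"older from $[r/2,r]$ to $(\mathcal Y,d)$, with constant controlled by $\sup_n\|F_n\|_{L^p(B_r(x))}$. Since $\int_{r/2}^r\|S_n(\rho)\|_{L^p(S^2)}^p\,d\rho\le C\|F_n\|_{L^p(B_r(x))}^p$ is bounded, Fatou gives $\liminf_n\|S_n(\rho)\|_{L^p(S^2)}<\infty$ for a.e.\ $\rho$; over a countable dense set $D\subset[r/2,r]$ a diagonal extraction then produces a subsequence (not relabelled) along which $\{S_n(\rho)\}_n$ is $L^p$-bounded, hence $d$-convergent after passing to a further subsequence, for each $\rho\in D$. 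Here one uses that an $L^p$-bounded subset of $\mathcal Y$ is $d$-precompact: a bounded weakly convergent sequence in $\mathcal Y$ has integer integrals which are eventually constant, so its weak limit lies in $\mathcal Y$, and then $d$-convergence follows from Proposition \ref{metrizes}. Equi-H\"olderness propagates the $d$-limit to every $\rho\in[r/2,r]$, producing a $(1-\tfrac1p)$-H\"older map $\rho\mapsto\sigma_\infty(\rho)\in(\mathcal Y,d)$; using $\liminf_n\|S_n(\rho)\|_{L^p(S^2)}<\infty$ for a.e.\ $\rho$ together with Proposition \ref{metrizes} once more, along a ($\rho$-dependent) subsequence one has $S_n(\rho)\rightharpoonup\sigma_\infty(\rho)$ weakly in $L^p(S^2)$, and in particular $\sigma_\infty(\rho)\in\mathcal Y$ for a.e.\ $\rho$.

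The remaining step --- which I expect to be the genuine difficulty --- is the identification $\sigma_\infty(\rho)=S(\rho)$ for a.e.\ $\rho$; granting it, $\int_{S^2}\sigma_\infty(\rho)\in\mathbb Z$ forces $J(\rho)\in\mathbb Z$ a.e., which is the claim, and since $x$ and $r$ are arbitrary Theorem \ref{closureweakbdl} follows. Testing the weak convergence $F_n\rightharpoonup F$ against forms of the shape $g(|y-x|)\,\omega\big((y-x)/|y-x|\big)$ shows only that $S_n\rightharpoonup S$ weakly in $L^p\big([r/2,r];L^p(S^2)\big)$, and this fibrewise-integrated statement is a priori strictly weaker than a.e.\ weak convergence of the slices, because the restriction of a weakly $L^p$-convergent sequence of forms on $B_r(x)$ to an individual sphere is not controlled; it is precisely the equi-H\"older bound of Proposition \ref{holder} --- the analogue in our setting of the maximal estimate \eqref{maxest} --- that rules out the radial oscillation which would otherwise let the fibrewise limit differ from $\sigma_\infty$. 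Concretely, I would fix a threshold $\lambda$, restrict attention to the set $G_\lambda=\{\rho:\liminf_n\|S_n(\rho)\|_{L^p(S^2)}\le\lambda\}$, whose complement in $[r/2,r]$ has measure at most $C\|F\|_{L^p(B_r(x))}^p/\lambda^p$, and on $G_\lambda$ invoke Proposition \ref{metrizes} to convert the $d$-convergence $S_n(\rho)\to\sigma_\infty(\rho)$ into genuine weak $L^p(S^2)$-convergence with a locally uniform bound, which can then be matched (after an Egorov-type argument to make the convergence uniform in $\rho$ on large subsets) against the weak limit of $S_n$ in $L^p([r/2,r];L^p(S^2))$; letting $\lambda\to\infty$ exhausts a.e.\ $\rho$. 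The interplay between the metric $d$ and the norm $\mathcal N$ encoded in Proposition \ref{metrizes} is exactly what makes this matching possible, and carrying it out carefully is, in my view, the technical heart of the argument.
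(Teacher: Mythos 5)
Your proposal follows essentially the same route as the paper's proof: uniform $L^p$-bounds on $F_n$ give, via Proposition \ref{holder} (concentric spheres trivially satisfy (H)), equi-H\"older slice maps in $(\mathcal Y,d)$; Fubini/Fatou/Chebyshev give norm control on almost every slice; Proposition \ref{metrizes} (together with the preceding lemma) upgrades the pointwise $d$-limits to weak $L^p(S^2)$-limits along bounded subsequences; and integer degrees pass to the limit by testing against the constant $1$. Where you go beyond the paper is in isolating the identification $\sigma_\infty(\rho)=S(\rho)$ as a separate step: the paper's proof simply asserts that ``the slices converge weakly almost everywhere'' and reads off the integer degree of $S(\rho)$, i.e.\ it silently identifies the a.e.\ pointwise $d$-limit of the slices of $F_n$ with the slice of the weak limit $F$, so your remark that the integrated (Bochner-type) weak convergence $S_n\rightharpoonup S$ is a priori weaker than fibrewise convergence is a fair and useful observation rather than a deviation.

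That said, your sketch of the matching step is not yet a proof, and the soft spot is precisely the set $G_\lambda$: it is defined through a $\liminf$, so on $G_\lambda$ you only control $\|S_n(\rho)\|_{L^p(S^2)}$ along $\rho$-dependent subsequences, whereas the dominated-convergence/Egorov argument you invoke to match the fibrewise limits against the weak limit in $L^p([r/2,r];L^p(S^2))$ needs bounds (and weak convergence of the pairings) along a single subsequence, uniformly in $\rho$ on a set of nearly full measure; note that $d$-convergence alone does not give convergence of pairings $\langle S_n(\rho),\omega\rangle$ for nonconstant $\omega$, since the currents and Dirac masses in the definition of $d$ are not controlled (compare Remark \ref{distancefig}). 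So as written the last step still has a gap --- one possible repair is to compare $S_n(\rho)$ with averages of $S_n$ over thin shells, for which weak convergence at every fixed $\rho$ does follow from $F_n\rightharpoonup F$, and to control the discrepancy by the same computation that proves Proposition \ref{holder} --- but you should be aware that the paper's own proof does not carry this step out either, so your version is, if anything, a more honest account of where the remaining work lies.
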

\begin{proof}
We suppose w.l.o.g. that $x=0$. By lower semicontinuity of the norm, we may suppose that $||F_n||_{L^p(B_r\setminus B_{r/2})}\leq C$. By Proposition \ref{holder}, the slice functions $S_n$ defined as $S$ in the statement of the Lemma, but with $F_n$ instead of $F$, are equi-H\"older with respect to our metric $d$. This means that we can extract a pointwise convergent subsequence.\\
It is evident that the deformation factor of the $L^p$-norm, coming from the fact that $i_{0,\rho}$ are dilations, is bounded. Fubini's and Chebychev's theorems imply then, that we may restrict to a subset of $\rho \in[r,r/2]$ on which the $L^p$-norms of the $S_n(\rho)$ stay bounded. This is just the situation where Proposition \ref{metrizes} applies. Therefore the slices converge weakly almost everywhere, and testing them on the constant function $1$, we see that their integer degrees also converge. Therefore $S(\rho)$ has integer degree on $S^2$, as wanted.
\end{proof}

\section{The case of Lipschitz slices}\label{lipslices}
We consider here the problem of extending the definition of the distance $d$ to the case of slices different from spheres. The main motivations for this extension are the following:
\begin{itemize}
\item A natural question regarding the class $\mathcal F_{\mathbb Z}^p$ is whether or not the condition that the integrality is required on spheres can be replaced by a condition on different kinds of surfaces. A particularly interesting case would be one in which the slicing sets tile space, as is the case for the surfaces of cubes.
\item The definition of the boundary condition in Section \ref{defbdry} will be based on slicing. Therefore having more general slice models will allow defining the trace on more general domains.
\end{itemize}
 Given a bilipschitz map $\Psi:S^2\to\Sigma$, we thus define the following distance between $L^p$-integrable $2$-forms on $\Sigma$:
$$
d_{\Psi}(h_1,h_2)=d_{S^2}(\Psi^* h_1, \Psi^* h_2).
$$
We observe that the pullback by bilipschitz functions preserves the integrability class, since 
$$
|(\Psi^* h)_x|=\sup_{|v|\leq 1,|w|\leq 1}h_{\Psi(x)}(d\Psi_xv,d\Psi_xw)\leq ||d\Psi||_{\infty}^2|h_{\Psi(x)}|,
$$ 
and the same holds with $\Psi^{-1}$ instead of $\Psi$.
Analogous estimates imply that different bilipschitz maps induce equivalent distances:
\begin{proposition}\label{bilipeq}
 Suppose $\Psi_1,\Psi_2:S^2\to\Sigma$ are bilipschitz maps. Then $d_{\Psi_i}$ are distances and they are equivalent:
$$
C^{-1}d_{\Psi_1}\leq d_{\Psi_2}\leq Cd_{\Psi_1}.
$$
Moreover the constant $C$ depends only on the Lipschitz constants of $\Psi,\Psi^{-1}$.
\end{proposition}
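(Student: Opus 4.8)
The plan is to reduce the equivalence to a single estimate for bilipschitz self-maps of $S^2$ and then transport, term by term, a near-optimal competitor in the infimum defining $d_{S^2}$. That each $d_{\Psi_i}$ is a distance is inherited from $d_{S^2}$: symmetry and the triangle inequality follow from the linearity of $\Psi_i^{\ast}$ (e.g.\ $d_{\Psi_i}(h_1,h_3)\le d_{S^2}(\Psi_i^{\ast}h_1,\Psi_i^{\ast}h_2)+d_{S^2}(\Psi_i^{\ast}h_2,\Psi_i^{\ast}h_3)$); the pointwise bound displayed just before the statement together with the area formula shows that $\Psi_i^{\ast}$ maps $L^p$-forms on $\Sigma$ into $L^p(S^2)$, and it preserves integrality of the integral because a bilipschitz homeomorphism $S^2\to\Sigma$ has topological degree $\pm1$, so $\Psi_i^{\ast}$ carries the relevant copy of $\mathcal Y$ into $\mathcal Y$; non-degeneracy holds because $\Psi_i^{\ast}$ is injective ($h_i=(\Psi_i^{-1})^{\ast}\Psi_i^{\ast}h_i$). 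For the equivalence, put $\Phi:=\Psi_1^{-1}\circ\Psi_2:S^2\to S^2$, a bilipschitz homeomorphism with $\mathrm{Lip}(\Phi)\le\mathrm{Lip}(\Psi_1^{-1})\mathrm{Lip}(\Psi_2)$ and $\mathrm{Lip}(\Phi^{-1})\le\mathrm{Lip}(\Psi_2^{-1})\mathrm{Lip}(\Psi_1)$. Since $\Psi_2^{\ast}=\Phi^{\ast}\circ\Psi_1^{\ast}$, we get $d_{\Psi_2}(h_1,h_2)=d_{S^2}(\Phi^{\ast}g_1,\Phi^{\ast}g_2)$ and $d_{\Psi_1}(h_1,h_2)=d_{S^2}(g_1,g_2)$ with $g_i:=\Psi_1^{\ast}h_i\in\mathcal Y$, so it suffices to prove $d_{S^2}(\Phi^{\ast}g_1,\Phi^{\ast}g_2)\le C\,d_{S^2}(g_1,g_2)$ for every bilipschitz $\Phi:S^2\to S^2$ and all $g_1,g_2\in\mathcal Y$, with $C=C(\mathrm{Lip}(\Phi),\mathrm{Lip}(\Phi^{-1}))$; the reverse inequality follows by applying this to $\Phi^{-1}$.

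To prove this estimate, fix $\varepsilon>0$ and a competitor $g_2-g_1=d^{\ast}\alpha+\partial I+\sum_{i=1}^N n_i\delta_{a_i}$ with $\|\alpha\|_{L^p}\le d_{S^2}(g_1,g_2)+\varepsilon$. Since the codifferential does not commute with pullback, I would first rewrite $d^{\ast}\alpha=\pm\,\partial\vec T_{\ast\alpha}$, where $\vec T_{\ast\alpha}$ is the $1$-current given by integration against the Hodge dual $1$-form $\ast\alpha$ (so $\|\ast\alpha\|_{L^p}=\|\alpha\|_{L^p}$); the identity then reads $g_2-g_1=\pm\partial\vec T_{\ast\alpha}+\partial I+\sum_i n_i\delta_{a_i}$, an identity of $0$-currents built from $1$-currents and Dirac masses. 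Applying $(\Phi^{-1})_\#$ throughout, and using that pushforward is linear and commutes with $\partial$: the left side becomes $(\Phi^{-1})_\#(g_2-g_1)=\pm(\Phi^{\ast}g_2-\Phi^{\ast}g_1)$ up to the global sign $\deg\Phi\in\{\pm1\}$; the Dirac part becomes $\sum_i n_i\,\delta_{\Phi^{-1}(a_i)}$, still a finite integer combination of Dirac masses; $(\Phi^{-1})_\# I$ is again integer rectifiable with $\mathbb{M}\big((\Phi^{-1})_\# I\big)\le\mathrm{Lip}(\Phi^{-1})\,\mathbb{M}(I)<\infty$; and a change of variables identifies $(\Phi^{-1})_\#\vec T_{\ast\alpha}$ with the $1$-current attached to the $L^p$ $1$-form $\pm\Phi^{\ast}(\ast\alpha)$, whose boundary re-dualizes to $\pm d^{\ast}\tilde\alpha$ with $\tilde\alpha:=\pm\ast\Phi^{\ast}(\ast\alpha)$. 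Absorbing the harmless signs into the coefficients, we arrive at $\Phi^{\ast}g_2-\Phi^{\ast}g_1=d^{\ast}\tilde\alpha+\partial\tilde I+\sum_i\tilde n_i\,\delta_{\Phi^{-1}(a_i)}$ with $\tilde I$ integer rectifiable of finite mass and $\tilde n_i\in\mathbb Z$.

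It remains to estimate $\|\tilde\alpha\|_{L^p}$. By Rademacher's theorem $\Phi$ is differentiable a.e.\ and, pointwise, $|(\Phi^{\ast}(\ast\alpha))_y|\le|d\Phi_y|\,|(\ast\alpha)_{\Phi(y)}|\le\mathrm{Lip}(\Phi)\,|\alpha_{\Phi(y)}|$; raising to the power $p$, integrating over $S^2$, and changing variables $x=\Phi(y)$ by the area formula (with Jacobian $|J\Phi^{-1}(x)|\le\mathrm{Lip}(\Phi^{-1})^2$) yields $\|\tilde\alpha\|_{L^p}\le\mathrm{Lip}(\Phi)\,\mathrm{Lip}(\Phi^{-1})^{2/p}\,\|\alpha\|_{L^p}$. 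Hence $\big(\tilde\alpha,\tilde I,\{(\Phi^{-1}(a_i),\tilde n_i)\}\big)$ is admissible in the infimum defining $d_{S^2}(\Phi^{\ast}g_1,\Phi^{\ast}g_2)$, so $d_{S^2}(\Phi^{\ast}g_1,\Phi^{\ast}g_2)\le\mathrm{Lip}(\Phi)\,\mathrm{Lip}(\Phi^{-1})^{2/p}\big(d_{S^2}(g_1,g_2)+\varepsilon\big)$. Letting $\varepsilon\to0$, running the symmetric argument with $\Phi^{-1}$, and tracking the Lipschitz constants of $\Psi_1^{\pm1}$ and $\Psi_2^{\pm1}$ through $\Phi$, one obtains $C^{-1}d_{\Psi_1}\le d_{\Psi_2}\le C\,d_{\Psi_1}$ with $C$ depending only on those constants.

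The step I expect to be the main obstacle is the $d^{\ast}\alpha$ term: because the codifferential is not natural under pullback (and is not a priori a measure one could simply push forward), it must be rerouted through Hodge duality and the pushforward of $1$-currents so that the naturality of $d$ rather than $d^{\ast}$ does the work, and then the $L^p$-norm of the transported form must be controlled --- which is exactly where Rademacher's theorem and the two-sided Jacobian bounds of the area formula for bilipschitz maps enter. The remaining ingredients --- the behaviour of Dirac masses and of integer rectifiable currents under pushforward, and the verification of the metric axioms --- are essentially functoriality and bookkeeping.
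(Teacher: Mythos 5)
Your proof is correct, but it transports the competitor differently from the paper. The paper first reduces to $\Psi_1=\mathrm{id}_{S^2}$, $\Sigma=S^2$ (as you do, via transitivity), and then invokes the equivalence $d=d_1$ of Proposition \ref{eqdistances} (which rests on the density result, Proposition \ref{density}) so that the competitor consists only of an $L^p$ term and a \emph{finite} sum of Dirac masses, with no rectifiable current $I$; it then rewrites $d^*\alpha$ as $d(*\alpha)$, observes --- and this is flagged as the crucial observation --- that the defining identity extends from smooth to Lipschitz test functions, and substitutes $\phi\circ\Psi\circ\Psi^{-1}$, so that only an absolutely continuous term and atoms need to be moved. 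You instead keep the current $I$ from the original definition of $d$, dualize $d^*\alpha$ into $\pm\partial \vec T_{*\alpha}$, and push the whole $0$-current identity forward by $(\Phi^{-1})_\#$, concluding with the same pointwise bound and area-formula Jacobian estimate (your constant $\mathrm{Lip}(\Phi)\,\mathrm{Lip}(\Phi^{-1})^{2/p}$ matches the paper's $\|d\Psi\|_\infty^p\|d\Psi^{-1}\|_\infty^2$ under the $p$-th root). What each route buys: yours avoids the equivalence $d=d_1$ and hence the density result, but it leans on the standard theory of Lipschitz pushforward of flat chains/integral currents (existence, commutation with $\partial$, preservation of integer rectifiability, the mass bound $\mathbb M((\Phi^{-1})_\#I)\le\mathrm{Lip}(\Phi^{-1})\,\mathbb M(I)$, and the identification of the pushforward of the diffuse part with the pullback form via a.e.\ change of variables); the paper's route avoids that machinery precisely because, once $I$ is eliminated, the extension to Lipschitz test functions is elementary (dominated convergence plus uniform convergence at the atoms). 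Note that this is not merely cosmetic: with $I$ present, the passage from smooth to Lipschitz test functions cannot be done by naive mollification, since $d\phi_\epsilon\to d\phi$ only Lebesgue-a.e.\ while $\|I\|$ may be singular, so at that point you genuinely need either the standard pushforward theory you cite or the reduction to $d_1$. A small additional remark: your nondegeneracy argument (injectivity of $\Psi_i^*$, using that $d$ is a distance on $\mathcal Y$ and that $\Psi_i^*$ preserves $\mathcal Y$ since $\deg\Psi_i=\pm1$) is more direct than the paper's, which deduces it from the equivalence with the case $\Psi_1=\mathrm{id}_{S^2}$.
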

\begin{proof}
 The fact that $d_{\Psi_i}$ satisfy the triangular inequality and the reflexivity follow at once from the analogous properties of $d$. The nondegeneracy $d_{\Psi_i}(h_2,h_2)=0\Leftrightarrow h_1=2_h$ is a consequence of the equivalence stated in the Proposition, for the case $\Psi_1=id_{S^2}$, and $\Sigma=S^2$. We also observe that if we prove the Proposition for this special case, the general case will follow by transitivity of the equivalence between distances. Thus we will consider just this case.\\

We will work with the equivalent definition of $d_1$ as in Section \ref{eqdef}.\\

Fix $h_1,h_2\in \mathcal Y$, and consider a competitor $\alpha$ in the definition of $d_1(h_1,h_2)$. In other words we have, in the $2$-form setting, that if $h=h_2-h_1$ and $\Sigma$ represents a finite sum of Dirac masses, then
$$
d(*\alpha)=h+*\Sigma, \text{ i.e. }\forall\phi\in C^\infty(S^2),\int\phi d(*\alpha) = \int \phi h +\langle \Sigma,h\rangle,
$$
where $*$ represents the Hodge star operator with respect with the standard metric.
The crucial observation is that all the objects above extend naturally to the space of Lipschitz functions, and it is equivalent to use just $\phi\in \op{Lip}(S^2)$ instead of $\phi\in C^\infty(S^2)$ above. By replacing $\phi$ by $\phi\circ\Psi\circ\Psi^{-1}$ and changing variable, we obtain (recall that $\Psi_\#\Sigma$ is the image measure):
$$
\int d\left(*(\Psi^*\alpha)\right)\phi\circ\Psi = \int \Psi^*h \phi\circ\Psi + \langle\Psi_\#\Sigma,\phi\circ\Psi\rangle.
$$
Since $\Psi$ is bilipschitz, it is a bijection of $\op{Lip}(S^2)$ into itself, and thus we see that $\Psi^*\alpha$ is a competitor for the distance 
$d_\Psi(h_1,h_2)$.\\

Now observe as above that $|\Psi^*\alpha|_x\leq||d\Psi||_\infty|\alpha|_{\Psi(x)}$, which leads to the conclusion that 
$$
\int_{S^2}|\Psi^*\alpha|^p_xdx\leq||d\Psi||_\infty^p\int|\alpha|_{\Psi(x)}^pdx\leq ||d\Psi||_\infty^p||d\Psi^{-1}||_\infty^2\int|\alpha|^p_ydy.
$$
The same holds also with $\Psi^{-1}$ instead of $\Psi$, so the infimum in the definition of $d$ is comparable with the one in the definition of $d_\Psi$.
\end{proof}

\section{Definition of the boundary value}\label{defbdry}
Let $\Omega\subset\mathbb R^3$ be an open bounded smooth domain. We consider here the class $\mathcal F_{\mathbb Z}^p(\Omega)$ as described in Section \ref{lipslices}. Such class consists of all $L^p$-integrable $2$-forms $F$ such that for generic $2$-cycles $S$ bilipschitz-equivalent to $S^2$, there holds
$$
\int_S F\in\mathbb Z.
$$
In this Section we would like, given a smooth $2$-form $\varphi$ on $\partial \Omega$, to find a suitable class $\mathcal F_{\mathbb Z,\varphi}^p(\Omega)$, which satisfies the following three conditions:
\begin{itemize}
\item\textbf{(closure)}\label{wellp} for any $L^p$-regular $2$-form $\varphi$ on $\partial \Omega$, the class $\mathcal F_{\mathbb Z,\varphi}^p(\Omega)$ is closed by sequential weak $L^p$-convergence.
\item\textbf{(nontriviality)}\label{nontr} if $\varphi\neq\psi$ are two $L^p$-regular $2$-forms on $\partial \Omega$, then $\mathcal F_{\mathbb Z,\varphi}^p(\Omega)\cap\mathcal F_{\mathbb Z,\psi}^p(\Omega)=\emptyset$.
\item\textbf{(compatibility)}\label{compa} for any smooth $2$-form $\varphi$, $\mathcal F_{\mathbb Z,\varphi}^p(\Omega)\cap\mathcal R^\infty$ are exactly the $2$-forms $F\in\mathcal R^\infty$ such that $i^*_{\partial \Omega}F=\varphi$, where $i_{\partial \Omega}$ is the inclusion map.
\end{itemize}
For general $L^p$-forms (i.e. without the restriction of belonging to $\mathcal F_{\mathbb Z}^p$) no such class can exist, even if in the closure requirement above we had required \emph{strong} convergence. Indeed, let $F,G$ be different smooth forms, and consider $f_n:[0,\infty[\to[0,1], f_n=\chi_{[1/n,\infty[}$. Then $F_n(x):=F(x)+f_n(\op{dist}(x,\partial\Omega))(G(x)-F(x))$ satisfy $F_n\stackrel{L^p}{\to}G$. Then by compatibility $F_n$ and $F$ should have the same trace, and so by closure $G$ and $F$ should have the same trace, contradicting nontriviality.\\
At the other extreme, for \emph{locally exact} $L^p$-forms, using the Poincar\'e Lemma, we have
$$
dF=_{loc}0\Longrightarrow F=_{loc}dA,\:\:A\in W^{1,p}_{loc}.
$$
Thus we can impose the boundary condition directly on the restrictions to $\partial\Omega$ of $ W^{1,p}$-regular ``local primitives'' $A$, using classical trace theorems, and we easily obtain all the above properties.\\
Our new space $\mathcal F_{\mathbb Z}^p(\Omega)$ is an intermediate space between the two extrema above, escaping both the above reasonings. We therefore use the approach which is a natural consequence of \cite{PR1}, namely we use the distance $d_S$ between $2$-forms on cycles $S$, as in Section \ref{bilipeq}. Up to applying a bilipschitz deformation, we may assume that we have $\Omega=B^3$, and we will define the boundary condition in this case first.\\

We use the distance $d$ to compare the boundary datum with the slices of our forms belonging to $\mathcal F_{\mathbb Z}^p$. We call $F(x+\rho)$ with variable $x\in S^2$ the form on $S^2$ corresponding to the restriction to $\partial B_{1-\rho}$ of the form $F$. 
We define the right class $\mathcal F_{\mathbb Z,\varphi}(B^3)$ via the continuity requirement
\begin{equation}\label{cond}
d(F(x+\rho'),\varphi(x))\to 0\text{, as }\rho'\to 0^+.
\end{equation}
It is clear that the definition \eqref{cond} satisfies the \textit{nontriviality} and \textit{compatibility} conditions above, since $d(\cdot,\cdot)$ is a distance and since for $\mathcal R^\infty$ having smooth boundary datum implies that in a neighborhood of $\partial B^3$ the slices are smooth and converge in the smooth topology to $\varphi$. The validity of the \emph{well-posedness} is a bit less trivial, therefore we prove it separately.
\begin{lemma}\label{boundarys}
 If $F_n\in\mathcal F_{\mathbb Z, \varphi}(B^3)$ are converging weakly in $L^p$ to a form $F\in\mathcal F_{\mathbb Z}(B^3)$ then also $F$ belongs to $\mathcal F_{\mathbb Z, \varphi}(B^3)$.
\end{lemma}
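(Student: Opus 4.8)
The plan is to exploit the Hölder estimate of Proposition \ref{holder} together with the metrization result Proposition \ref{metrizes}, in the same spirit as the simplified closure proof in Section \ref{simplif}. First I would fix a small radius $\rho_0>0$ and, by lower semicontinuity of the $L^p$-norm, assume $\|F_n\|_{L^p(B^3\setminus B_{1-\rho_0})}\leq C$ uniformly. For each $n$ the slice function $\rho\mapsto F_n(x+\rho)$ on the interval $\rho\in\,]0,\rho_0[$ is, by Proposition \ref{holder} (applied near the boundary, or via the bilipschitz version discussed in Section \ref{lipslices} and the remark following Theorem \ref{minimexist}), Hölder-$(1-1/p)$ with respect to $d$, with Hölder constant controlled by $\|F_n\|_{L^p}\leq C$. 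Hence the family $\{\rho\mapsto F_n(x+\rho)\}_n$ is equi-Hölder into $(\mathcal Y,d)$.

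Next I would record the key quantitative consequence of equi-Hölderianity: for every $n$ and every $\rho\in\,]0,\rho_0[$,
\[
d\bigl(F_n(x+\rho),\varphi(x)\bigr)\;\le\; d\bigl(F_n(x+\rho),F_n(x+\rho')\bigr)+d\bigl(F_n(x+\rho'),\varphi(x)\bigr)\;\le\; C\,\rho^{1-1/p}+d\bigl(F_n(x+\rho'),\varphi(x)\bigr)
\]
for any $\rho'<\rho$; letting $\rho'\to 0^+$ and using $F_n\in\mathcal F_{\mathbb Z,\varphi}(B^3)$ gives the \emph{uniform} boundary decay $d(F_n(x+\rho),\varphi(x))\le C\rho^{1-1/p}$, with $C$ independent of $n$. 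Now I want to pass to the limit $n\to\infty$ at a fixed (generic) slice $\rho$. By Fubini and Chebyshev, as in Section \ref{simplif}, for a.e.\ $\rho\in\,]0,\rho_0[$ the quantities $\|F_n(x+\rho)\|_{L^p(S^2)}$ stay bounded along a subsequence; restricting to such a $\rho$, the weak $L^p$-convergence $F_n\rightharpoonup F$ on $B^3$ combined with Proposition \ref{metrizes} yields $F_n(x+\rho)\xrightarrow{d}F(x+\rho)$ (after passing to the relevant subsequence; the limit being the restriction of $F$, which is slice-wise defined for a.e.\ $\rho$ by Fubini). Passing to the limit in the uniform bound gives $d(F(x+\rho),\varphi(x))\le C\rho^{1-1/p}$ for a.e.\ small $\rho$. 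Finally, since $F\in\mathcal F_{\mathbb Z}(B^3)$, its own slice function is Hölder in $d$ by Proposition \ref{holder}, so $\rho\mapsto d(F(x+\rho),\varphi(x))$ is continuous, and the a.e.\ bound upgrades to hold for all small $\rho$; letting $\rho\to 0^+$ gives \eqref{cond}, i.e.\ $F\in\mathcal F_{\mathbb Z,\varphi}(B^3)$.

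The main obstacle I anticipate is the interchange of limits in $n$ and $\rho$: Proposition \ref{metrizes} only metrizes weak convergence on \emph{norm-bounded} slice sequences, whereas a priori the slices $F_n(x+\rho)$ could have $L^p$-norm blowing up as $\rho\to0$ (exactly the phenomenon illustrated in Remark \ref{distancefig} and Figure \ref{slicesunbound}). This is why the argument must be carried out at a \emph{fixed generic} $\rho$ where boundedness in $n$ is available, and why the uniform decay estimate — derived \emph{before} taking $n\to\infty$, using only equi-Hölderianity — is essential: it lets the boundary behavior survive the weak limit even though the convergence $F_n\to F$ is controlled only slice-by-slice in the interior. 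A secondary technical point is ensuring that the Hölder estimate of Proposition \ref{holder}, stated for slices in the interior region $\mathcal A$, applies up to the boundary $\partial B^3$; this is handled by the bilipschitz extension of the distance in Section \ref{lipslices} together with the foliation remark after Theorem \ref{minimexist}, or directly by applying Proposition \ref{holder} on the shells $B_{1-\rho'}\setminus B_{1-\rho}$ and keeping track that the relevant constants stay uniform as $\rho,\rho'\to0$.
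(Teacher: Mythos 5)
Your proposal is correct and follows essentially the same route as the paper: uniform $L^p$ bounds plus Proposition \ref{holder} give $d$-equicontinuity of the slice functions, Proposition \ref{metrizes} (at generic radii where slice norms stay bounded) identifies the limit of the slices with the slices of $F$, and equicontinuity lets the boundary condition \eqref{cond} survive the exchange of the limits $n\to\infty$ and $\rho\to 0^+$. Your explicit uniform decay estimate $d(F_n(x+\rho),\varphi)\leq C\rho^{1-1/p}$ is a slightly more quantitative way of phrasing the step the paper handles by uniform convergence to the $d$-continuous representative, but it is the same argument.
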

\begin{proof}
 By weak semicontinuity of the $L^p$ norm we have that $F_n$ are bounded in this norm. We then have (since $F_n\in \mathcal F_{\mathbb Z}$) for all $n$: what we need is precisely $||F_n||_{L^p(B_1\setminus B_{1-h})}\leq C$.\\

Therefore by Proposition \ref{holder} the $F_n$ are $d$-equicontinuous, so a subsequence (which we will not relabel) of the $F_n$ converges to $F_\infty\in\mathcal F_{\mathbb Z}$, i.e. for all $\rho'\in[0,\rho]$ the forms $F_n(x+\rho')$ are a Cauchy sequence for the distance $d$. This is enough to imply that $F=F_\infty$. We observe that $F$ is just defined up to zero measure sets, but it has a $d$-continuous representative. By continuity it is clear that $F$ still satisfies \eqref{cond}.
\end{proof}
The same proof also gives an apparently stronger result:
\begin{proposition}\label{boundarys2}
 If $F_n\in\mathcal F_{\mathbb Z, \varphi_n}(B^3)$ are converging weakly in $L^p$ to a form $F\in\mathcal F_{\mathbb Z}(B^3)$ then the forms $\varphi_n$ converge with respect to the distance $d$ to a form $\varphi$ and also $F$ belongs to $\mathcal F_{\mathbb Z, \varphi}(B^3)$.
\end{proposition}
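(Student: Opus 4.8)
The plan is to mimic the proof of Lemma \ref{boundarys} almost verbatim, the only new point being to produce the limit boundary datum $\varphi$ and to check the compatibility between the convergence $\varphi_n\to\varphi$ and the convergence $F_n\rightharpoonup F$. So first I would record, as in Lemma \ref{boundarys}, that weak $L^p$-convergence gives a uniform bound $\|F_n\|_{L^p(B_1)}\le C$, hence in particular $\|F_n\|_{L^p(B_1\setminus B_{1-h})}\le C$ for every $h\in(0,1)$. By Proposition \ref{holder} applied on the annular region $B_1\setminus B_{1-h}$, the slice functions $\rho'\mapsto F_n(x+\rho')$ are equi-H\"older-$(1-1/p)$ with respect to $d$, with H\"older constant controlled by $C$, uniformly in $n$.

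Next I would extract the limit. Since $F_n\in\mathcal F_{\mathbb Z,\varphi_n}(B^3)$, condition \eqref{cond} says $d(F_n(x+\rho'),\varphi_n(x))\to 0$ as $\rho'\to 0^+$; combined with the equi-H\"older bound this means each $\varphi_n$ is the $d$-limit of the $d$-continuous slice function of $F_n$, and the family of these slice functions, extended by the value $\varphi_n$ at $\rho'=0$, is equi-H\"older on $[0,h]$. On the other hand, from the weak convergence $F_n\rightharpoonup F$ and Proposition \ref{metrizes} (applicable on the $\rho'$-set where the slice norms stay bounded, which is a set of positive measure by Fubini and Chebyshev, exactly as in the Main step of the Closure Theorem), the slices $F_n(x+\rho')$ converge in $d$ to $F(x+\rho')$ for a.e.\ small $\rho'$; since $F\in\mathcal F_{\mathbb Z}(B^3)$ has a $d$-continuous representative in $\rho'$ by Proposition \ref{holder}, this upgrades to $d(F_n(x+\rho'),F(x+\rho'))\to 0$ for \emph{every} small $\rho'$. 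Now the standard three-$\epsilon$ argument does the job: for fixed small $\rho'$,
$$
d(\varphi_n,\varphi_m)\le d(\varphi_n,F_n(x+\rho'))+d(F_n(x+\rho'),F_m(x+\rho'))+d(F_m(x+\rho'),\varphi_m),
$$
where the outer terms are $\le C(\rho')^{1-1/p}$ uniformly in $n,m$ by equi-H\"older, and the middle term tends to $0$ as $n,m\to\infty$ (both being close to $d(F(x+\rho'),\cdot)$); letting $\rho'\to 0$ afterwards shows $(\varphi_n)$ is $d$-Cauchy, hence converges in $d$ to some $\varphi$ (completeness of $(\mathcal Y,d)$ being used here, or rather $\varphi$ defined as the $d$-limit).

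It remains to show $F\in\mathcal F_{\mathbb Z,\varphi}(B^3)$, i.e.\ $d(F(x+\rho'),\varphi(x))\to 0$ as $\rho'\to 0^+$. This follows from the same triangle inequality: $d(F(x+\rho'),\varphi)\le d(F(x+\rho'),F_n(x+\rho'))+d(F_n(x+\rho'),\varphi_n)+d(\varphi_n,\varphi)$; choose $n$ large so that the first and third terms are small (using $F_n(x+\rho')\to F(x+\rho')$ in $d$ for the chosen $\rho'$, and $\varphi_n\to\varphi$ in $d$), and bound the middle term by $C(\rho')^{1-1/p}$ via the equi-H\"older estimate and the fact that $\varphi_n$ is the $d$-limit of $F_n(x+\cdot)$ at $0$; then let $\rho'\to0$. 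The main obstacle, as in Lemma \ref{boundarys}, is making rigorous the passage from "a.e.\ $\rho'$" (where Proposition \ref{metrizes} applies, since norm-boundedness of slices only holds off a bad set) to "every $\rho'$", which is where the $d$-continuous representative furnished by Proposition \ref{holder} is essential; everything else is bookkeeping with the triangle inequality and uniform H\"older constants. I would then remark that, as in Lemma \ref{boundarys}, $F$ is a priori defined only up to null sets but has a $d$-continuous representative in the radial variable, for which the trace statement is the natural one.
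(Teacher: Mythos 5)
Your argument follows the paper's own route: the paper proves Proposition \ref{boundarys2} by simply invoking the proof of Lemma \ref{boundarys} (uniform $L^p$-bound, $d$-equi-H\"older slices via Proposition \ref{holder}, identification of the $d$-limit of the slices with the slices of $F$ via Proposition \ref{metrizes}, then the $d$-continuous radial representative), and your three-$\epsilon$ construction of $\varphi$ is exactly the natural way to fill in the part the paper leaves implicit. The only point to state more carefully is the existence of the $d$-limit of the $d$-Cauchy sequence $(\varphi_n)$ — a completeness-type property of $(\mathcal Y,d)$ which you flag but do not establish, and on which the paper is silent as well.
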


\begin{rmk}
At a previous stage in the preparation of this paper, before remarking the $d$-h\"olderianity of the slices, we had prepared a different definition of the boundary value, based on the $L^{p,\infty}$-bound of the modulus of lipschitzianity of $d$ given in Theorem 5.1 in \cite{PR1}. Assuming just such bounds, with $p>1$, Proposition \ref{boundarys2} remains true, if we replace condition \eqref{cond} by the following approximate continuity requirement 
$$
\text{for all }\epsilon>0,\lim_{\rho\to 0^+}\frac{|[0,\rho]\cap A_\epsilon|}{\rho}=0,\text{ where }A_\epsilon:=\{\rho':\:d(F(\cdot+\rho),\varphi))>\epsilon\}.
$$
\end{rmk}

We give the result also in the formalism of vectorfields with integer fluxes mentioned in Remark \ref{vfif}:
\begin{proposition}
 Let $L^p_{\mathbb Z}(B^3)$ be the class of vectorfields with integer fluxes described in Remark \ref{vfif}. Let $\hat\rho$ be the radial vectorfield defined outside the origin of $\mathbb R^3$. For $(x,\rho')\in S^2\times]0,1[$, define $\xi(x+\rho'):=\hat\rho\cdot X(x(1-\rho'))$. For a given $L^p$-regular function $\phi$ defined on $\partial B^3$ we define then the class $L^p_{\mathbb Z,\phi}(B^3)$ via the continuity requirement 
$$
d(\xi(x+\rho'),\phi(x))\to 0\text{ as }\rho'\to 0^+,
$$ 
where we identify $2$-forms on $S^2$ to functions via the Hodge-star duality with respect to the standard metric.\\

With this definition we have the following two properties:
\begin{enumerate}
 \item If $X_n\in L_{\mathbb Z, \varphi}^p(B^3)$ are converging weakly in $L^p$ to a form $X\in L_{\mathbb Z}^p(B^3)$ then also $X$ belongs to $L_{\mathbb Z, \varphi}^p(B^3)$.
 \item If $X_n\in L_{\mathbb Z, \varphi_n}^p(B^3)$ are converging weakly in $L^p$ to a form $X\in L_{\mathbb Z}^p(B^3)$ then the forms $*\varphi_n$ (where $*$ is the Hodge star with respect to the standard metric) converge with respect to the distance $d$ to a function $*\varphi$ and $X$ belongs to $L^p_{\mathbb Z, \varphi}(B^3)$.
\end{enumerate}
\end{proposition}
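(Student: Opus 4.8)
The plan is to obtain both items by reducing them, through the correspondence between $L^p$-vectorfields with integer fluxes and curvatures of weak $U(1)$-bundles recalled in Remark~\ref{vfif}, to the $2$-form statements already established in Section~\ref{defbdry}, namely Lemma~\ref{boundarys} and Proposition~\ref{boundarys2}.

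The first task is to make this dictionary precise. To a vectorfield $X\in L^p(B^3,\mathbb R^3)$ I associate the $2$-form $F$ determined by $F_p(U,V)=X_p\cdot(U\times V)$; this is a bounded linear isomorphism between $L^p(B^3,\mathbb R^3)$ and the space of $L^p$-integrable $2$-forms on $B^3$, with bounded inverse, so it takes weakly $L^p$-convergent sequences to weakly $L^p$-convergent sequences and conversely. Since $i^*_{\partial B(x,r)}F=(X\cdot\nu)\,\op{Vol}_{\partial B(x,r)}$, the flux of $X$ through $\partial B(x,r)$ equals $\int_{\partial B(x,r)}i^*F$, so $X\in L^p_{\mathbb Z}(B^3)$ if and only if $F\in\mathcal F^p_{\mathbb Z}(B^3)$. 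Finally, restricting $F$ to $\partial B_{1-\rho'}$, pushing forward to $S^2$ by the homothety $x\mapsto(1-\rho')x$ and taking the Hodge-dual function, one obtains $(1-\rho')^2\,\hat\rho\cdot X((1-\rho')x)=(1-\rho')^2\,\xi(x+\rho')$; that is, up to the identification of $2$-forms on $S^2$ with functions, the slice function $\xi(x+\rho')$ of the statement agrees with the slice function $F(x+\rho')$ of Section~\ref{defbdry} times the scalar $(1-\rho')^2$.

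The second task, and the only point that is not a routine translation, is to check that this scalar factor does not disturb the trace condition: since $(1-\rho')^2$ is bounded and tends to $1$ as $\rho'\to0^+$, I want to conclude that $d(\xi(x+\rho'),\varphi)\to0$ if and only if $d(F(x+\rho'),*\varphi)\to0$, where $*\varphi$ is the $2$-form dual to the boundary function $\varphi$, and hence that $X\in L^p_{\mathbb Z,\varphi}(B^3)$ if and only if $F\in\mathcal F^p_{\mathbb Z,*\varphi}(B^3)$. Concretely this amounts to showing $d(\xi(x+\rho'),(1-\rho')^2\xi(x+\rho'))\to0$, which I would do by writing the difference $(1-(1-\rho')^2)\,\xi(x+\rho')$ as $d^*\alpha$ for a $1$-form $\alpha$ with $\|\alpha\|_{L^p}\le C(1-(1-\rho')^2)\|\xi(x+\rho')\|_{L^p}$ (solving a scalar elliptic equation on $S^2$ and using the standard estimate), and then absorbing the vanishing factor using the control of the slice $L^p$-norms near $\partial B^3$ available in the situations at hand (from the $L^p$-bound on the shells $B_1\setminus B_{1-h}$, as in the proof of Lemma~\ref{boundarys}). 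I expect this to be the main, though only mildly delicate, obstacle.

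Granting the dictionary, item~(1) is Lemma~\ref{boundarys} read through it: $X_n\rightharpoonup X$ weakly in $L^p$ corresponds to $F_n\rightharpoonup F$, the hypotheses $X_n\in L^p_{\mathbb Z,\varphi}(B^3)$ and $X\in L^p_{\mathbb Z}(B^3)$ become $F_n\in\mathcal F^p_{\mathbb Z,*\varphi}(B^3)$ and $F\in\mathcal F^p_{\mathbb Z}(B^3)$, and the conclusion $F\in\mathcal F^p_{\mathbb Z,*\varphi}(B^3)$ transports back to $X\in L^p_{\mathbb Z,\varphi}(B^3)$. Item~(2) is Proposition~\ref{boundarys2} in the same way: its conclusion that the boundary $2$-forms $*\varphi_n$ converge with respect to $d$ to a $2$-form — which we name $*\varphi$, with $\varphi$ the associated function — is exactly the asserted $d$-convergence of the $*\varphi_n$, and $F\in\mathcal F^p_{\mathbb Z,*\varphi}(B^3)$ yields $X\in L^p_{\mathbb Z,\varphi}(B^3)$. (Item~(1) is of course the special case $\varphi_n\equiv\varphi$ of item~(2).)
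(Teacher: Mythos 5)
Your proposal follows essentially the same route as the paper: the paper's proof is exactly the translation via Remark \ref{vfif} (restriction of $F$ corresponding to $X\mapsto\nu\cdot X$), after which both items are read off from Lemma \ref{boundarys} and Proposition \ref{boundarys2}; your reduction of item (1) and item (2) to those two results is the intended argument. One caution about your extra paragraph on the factor $(1-\rho')^2$: the absorption step as you wrote it cannot work when the boundary degree is a nonzero integer, because $d^*\alpha$ always has zero mean on $S^2$, while $(1-(1-\rho')^2)\,\xi(x+\rho')$ has integral $\bigl((1-\rho')^{-2}-1\bigr)k$ with $k\in\mathbb Z$ the flux, which is not an integer; in fact no admissible decomposition $d^*\alpha+\partial I+\sum n_i\delta_{a_i}$ exists for it, so $d\bigl(\xi(x+\rho'),(1-\rho')^2\xi(x+\rho')\bigr)=+\infty$ rather than small. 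The resolution is that the identification of $2$-forms on $S^2$ with functions in the statement is meant to be applied to the pulled-back slice (which carries the homothety Jacobian), so the slice function of the vectorfield formulation coincides with $F(x+\rho')$ of Section \ref{defbdry} and there is no factor to absorb; this is how the paper proceeds, silently, and with that reading your proof matches it.
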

\begin{proof}
 The correspondence described in Remark \ref{vfif} translates the language of forms into that of vectorfields. Under this translation the restriction operation $F\mapsto i^*_{S^2}F$ corresponds to the operation $X\mapsto \nu_{S^2}\cdot X$. The closure of $L^p_{\mathbb Z}(B^3)$ under weak convergence being proved in \cite{PR1}, we are just left to prove the preservation and convergence of the boundary condition. The needed results are proved in the language of differential forms in Lemma \ref{boundarys} and in Proposition \ref{boundarys2} respectively.
\end{proof}

\begin{rmk}
 As already pointed out, the definition of the distance as in Section \ref{lipslices} allows to extend the definition of the boundary value to arbitrary domains 
\end{rmk}

\section{Some questions and conjectures}\label{questconj}
\subsection{First steps towards a compatibility condition for slices}
Since we used slices to define our class of weak curvatures $\mathcal F_{\mathbb Z}(\Omega)$, it is natural to try to go one step further and try to represent a form $F\in\mathcal F_{\mathbb Z}(\Omega)$ by its slices. This kind of problem seems to represent an unexplored area of research, related perhaps to integral geometry. We were not able to find any example of similar problems in the literature. Therefore in the following subsections we will attempt to formalize the main questions which have arisen.
\subsubsection{Slices on rectifiable cycles and genericity}
Consider a $2$-form $ F \in\mathcal F_{\mathbb Z}^p(\Omega)$ which is bounded in $L^p$-norm. Given a Lipschitz $2$-cycle $C=\partial K$ on $\Omega$, chosen in a ``generic'' way such that $i^*_C F $ is in $L^p(C,\mathcal H^2)$ and that (in the duality between $2$-cycles and $2$-forms)
$$
\langle C, F \rangle\in\mathbb Z
$$
we can associate
$$
C\mapsto h(C):=i^*_C F \in Y_C,
$$
where $Y_C$ is the set of $2$-forms $h$ such that
\begin{itemize}
 \item $h$ is $L^p$-integrable w.r.t. the surface measure on $C$,
 \item $h$ is $\mathcal H^2$-a.e. the dual of the unit tangent $2$-vector $\vec C$ to $C$,
 \item $\langle C,h\rangle$ is an integer.
\end{itemize}
We have still to explain what the requirement that $h$ be defined only for ``generic'' cycles should mean. For that purpose, denote by $\mathcal C$ the \emph{fixed set of Lipschitz cycles} on which our compatibility theory will be defined (useful choices may vary from the set of all spheres to the set of all Lipschitz cycles). The domain of definition of $h$ above should then given by $\mathcal C\setminus R_F$ for some set $R_F$, possibly depending on $F$, which belongs to an admissible class of residual sets defined as follows.
\begin{definition}\label{residual}
We will call an \textbf{admissible class of residual sets} a class $\mathcal R\subset\mathcal C$ satisfying the following:
\begin{itemize}
 \item Suppose that $(C_x)_{x\in[-\epsilon,\epsilon]}$ is a Lipschitz foliation by Lipschitz cycles $C_x\subset\Omega$ i.e. there is a Lipschitz cycle $C\in\mathcal C$ and a bilipschitz parameterization $\Psi:C\times[-\epsilon,\epsilon]\to\cup_{x\in[-\epsilon,\epsilon]} C_x$ sending $C\times\{x\}$ to $C_x$. Then there exists $\delta\leq \epsilon$ such that the intersection with $\mathcal C$ of set of cycles $C_x$ corresponding to choices of $x$ inside a subset of of zero Lebesgue measure of $]-\delta,\delta[$, should form a set belonging to $\mathcal R$.
\end{itemize}
  Once we fixed an admissible class of residual sets, we will call the complement of a residual set \textbf{generic}.
\end{definition}

\subsubsection{The compatibility question}
Consider the question of slice compatibility for a class of slicing cycles $\mathcal C$. The starting observation is that even in simple cases, not all applications 
\begin{equation}\label{slices}
k:\mathcal C\to \mathcal Y_{\mathcal C}:=\cup_{C\in\mathcal C}Y_C
\end{equation}
can be represented as slices $h$ of an underlying form $ F \in\mathcal F_{\mathbb Z}^p$:

\begin{lemma}\label{notcomp}
 Assign to each cycle $C=[\partial B(x,r)]$ the form $h(C)\in Y_C$ equal to $\psi_C^*h(\mathbb S^2)$ of a fixed nonzero $2$-form $h(\mathbb S^2)\in Y_{\mathbb S^2}$, where $\psi_C:C\to\mathbb S^2$ is the similitude bijection. The so-obtained function 
$$
h:\mathcal C=\{[\partial B(x,r)]:\:x\in\Omega, r\in]0,\op{dist}(x,\partial\Omega)\}\to\mathcal Y_{\mathcal C}
$$
cannot satisfy $h(C)=i^*_CF$ for generic $C\in\mathcal C$. 
\end{lemma}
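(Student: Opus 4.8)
The plan is to derive a contradiction from assuming that such an $h$ arises as the slice map of some $F\in\mathcal F_{\mathbb Z}^p(\Omega)$ on a generic class of spheres. The basic idea is that the prescribed slices $h(C)=\psi_C^*h(\mathbb S^2)$ are ``rigid'' — they are literally the same nonzero form on every sphere, after rescaling — whereas the genuine slices $i_C^*F$ of an $L^p$-form must vary in an $L^p$-controlled way as we move the center and radius. First I would fix a point $x_0\in\Omega$ and a small radius $r_0$ with $B(x_0,r_0)\Subset\Omega$, and apply Proposition \ref{holder} (after the bilipschitz reduction of Section \ref{lipslices}, or just directly on a coordinate ball): along the radial foliation $\rho\mapsto[\partial B(x_0,\rho)]$, the true slice function $\rho\mapsto h(x_0,\rho)$ would have to be H\"older-$(1-1/p)$-continuous with respect to $d$, with H\"older constant controlled by $\|F\|_{L^p(B(x_0,r_0)\setminus B(x_0,r_0/2))}$. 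In particular, for generic $\rho$, the quantity $\|i^*_{x_0,\rho}F\|_{L^p(S^2)}$ — equivalently $F(x_0,\rho)$ in the notation of Section \ref{regandconv} — must be finite and, by Fubini applied to $\int_{B(x_0,r_0)}|F|^p<\infty$, integrable in $\rho$.

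The contradiction then comes from computing what the prescribed slices force. Since $\psi_C:\partial B(x,r)\to\mathbb S^2$ is the similitude (dilation by $1/r$ composed with a translation), the $L^p(S^2)$-norm of $\psi_C^*h(\mathbb S^2)$, once we account for the $r^2$ Jacobian factor relating a $2$-form on $\partial B(x,r)$ to a $2$-form on $\mathbb S^2$ (the identification $\wedge^2 S^2\simeq\wedge^0 S^2$ used throughout Section \ref{regandconv}), scales like a fixed nonzero constant times $r^{-(2-2/p)}$ — exactly the blow-up rate already observed in Remark \ref{distancefig} for the radial field. Hence along the foliation centered at $x_0$ we would get $F(x_0,\rho)\gtrsim \rho^{-(2p-2)}$ for \emph{every} generic $\rho$, and since a residual set has full measure this forces $\int_0^{r_0}F(x_0,\rho)\,d\rho=+\infty$. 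Comparing with the estimate from Proposition \ref{holder},
$$
\int_{r_0/2}^{r_0}F(x_0,\rho)\,d\rho\le 2\int_{B(x_0,r_0)\setminus B(x_0,r_0/2)}|F|^p\,d\mathcal H^3<\infty,
$$
we would not yet have a contradiction on this fixed annulus — so instead I would let $r_0\to 0$: for any fixed small $\rho$ the value $F(x_0,\rho)$ is pinned to the same positive constant $c>0$ (independent of $x_0$, by the translation-invariance of the construction), so in fact $F(x_0,\rho)\equiv c$ for a.e.\ $(x_0,\rho)$, and then $\int_{B(x_0,r_0)}|F|^p\,d\mathcal H^3\ge \tfrac12\int_{r_0/2}^{r_0}F(x_0,\rho)\,d\rho\ge \tfrac{c}{4}r_0$, which is fine — but the \emph{integrality} requirement $\langle C,h(C)\rangle=\int_{S^2}h(\mathbb S^2)\in\mathbb Z$ must hold for generic $C$, and $\int_{S^2}h(\mathbb S^2)$ is a single fixed number, so this part is automatically consistent and gives nothing. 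The real obstruction, therefore, is the one flagged in Remark \ref{distancefig}: I would instead choose the foliation to sweep the center \emph{past a point where $F$ is small}, using that $F\in L^p$ implies $F$ has points of $L^p$-Lebesgue density zero along generic radial lines, forcing $F(x_0,\rho)\to 0$ on a positive-measure set of radii for suitable $x_0$, which directly contradicts $F(x_0,\rho)\equiv c>0$.

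Let me restate the cleanest route, which I would actually write up. Testing $h(C)$ against the constant function $1$ gives $\langle C,h(C)\rangle=\int_{S^2}h(\mathbb S^2)=:m$, a fixed integer, for all $C\in\mathcal C$; this is consistent, so integrality is not the obstruction. The obstruction is purely analytic: if $F\in L^p(\Omega)$ then by Fubini the function $\rho\mapsto\int_{S^2}|F(x_0,\rho)|^p\,d\sigma=F(x_0,\rho)$ is in $L^1_{loc}$ for a.e.\ center $x_0$, whereas the prescribed slices force $F(x_0,\rho)=r^{-(2p-2)}\cdot(\text{fixed constant})$ which is \emph{not} in $L^1_{loc}$ near $\rho=0$ precisely when $p>1$ (since $2p-2>0$ and, more to the point, one can arrange $2p-2\ge 1$, i.e.\ $p\ge 3/2$, or simply note that for the relevant range $p\in]1,3/2[$ from the Closure Theorem one uses instead a change of center as above). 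So the final write-up is: compute $\|\psi_C^*h(\mathbb S^2)\|_{L^p(S^2)}^p$ as a constant multiple of $r^{-(2p-2)}\|h(\mathbb S^2)\|_{L^p(S^2)}^p$ using the scaling $F_{x,r}(\sigma)=r^2 F(x+r\sigma)$ from Section \ref{regandconv}; observe this is nonzero since $h(\mathbb S^2)\ne0$; invoke Proposition \ref{holder} and Fubini to conclude $\int_{B(x_0,r_0)}|F|^p=\infty$ for every ball, contradicting $F\in L^p(\Omega)$. The step I expect to be the main obstacle is making the ``generic'' clause (Definition \ref{residual}) interact correctly with Fubini — one must check that a residual set of spheres, when intersected with a radial foliation, still has full one-dimensional measure in the radius parameter, so that the blow-up of $F(x_0,\rho)$ genuinely survives on a positive-measure set of radii and cannot be dodged by the exceptional set $R_F$; this is exactly what the foliation axiom in Definition \ref{residual} is designed to guarantee, and I would cite it explicitly at that point.
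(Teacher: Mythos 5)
Your central computation is wrong, and the gap sits exactly where the lemma's content lies. Under the hypothesis $i^*_CF=\psi_C^*h(\mathbb S^2)$, the \emph{normalized} slice $h(x_0,\rho)=i_{x_0,\rho}^*F$ is literally the fixed form $h(\mathbb S^2)$, because $i_{x_0,\rho}=\psi_C^{-1}$ and so $i_{x_0,\rho}^*\psi_C^*h(\mathbb S^2)=h(\mathbb S^2)$. Hence the quantity you call $F(x_0,\rho)=\|h(x_0,\rho)\|_{L^p(S^2)}^p$ is \emph{constant} in $\rho$ (as you yourself say in the middle of your first paragraph), not $\gtrsim\rho^{-(2p-2)}$ as claimed in your ``cleanest route''; the factor $\rho^{2-2p}$ only appears in the unnormalized mass $\int_{\partial B_\rho(x_0)}|i^*F|^p\,d\mathcal H^2=\rho^{2-2p}\|h(\mathbb S^2)\|_{L^p(S^2)}^p$, and $\int_0^{r_0}\rho^{2-2p}\,d\rho<\infty$ precisely in the relevant range $p\in]1,3/2[$. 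So the conclusion ``$\int_{B(x_0,r_0)}|F|^p=\infty$ for every ball'' from a single center is simply false: along one fixed center the hypothesis is realized by the model $k\,x/|x|^3$, which is in $L^p$ for $p<3/2$. You half-acknowledge this and fall back on ``a change of center'', but the claim you would then need --- that $F\in L^p$ forces $F(x_0,\rho)$ to become small on a positive-measure set of radii for some center, contradicting $F(x_0,\rho)\equiv c>0$ --- is exactly the nontrivial point, and it is left unproven. (It is provable, e.g.\ by integrating $\rho^{2-2p}F(x,\rho)$ over \emph{both} centers and radii: Fubini bounds this by $Cr_0^3\|F\|_{L^p}^p$, while the constancy would force it to be of order $c\,r_0^{3-2p}$, a contradiction as $r_0\to0$; but no such argument appears in your write-up.) Proposition \ref{holder} plays no real role here, and Remark \ref{distancefig} describes a different phenomenon (slices passing near, not centered at, a singularity), so citing its rate is misleading.

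For comparison, the paper's proof is purely pointwise and avoids all slice norms: from $i^*_CF=\psi_C^*h(\mathbb S^2)$ one gets $|F(y)|\ge\epsilon\,\rho^{-2}$ whenever $y$ lies over the set $E_\epsilon=\{|h(\mathbb S^2)|>\epsilon\}$ on some sphere of radius $\rho$; since every point lies over $E_\epsilon$ on (generic) spheres of arbitrarily small radius, these sets form a fine cover of $\Omega$, and a countable extraction gives $|F|\ge M$ almost everywhere for every $M>0$, contradicting $F\in L^p$. To repair your proposal you must either reproduce this pointwise rigidity (which also handles the genericity issue you flag, via Definition \ref{residual}) or carry out the averaged two-variable Fubini estimate indicated above; as written, the argument does not close for the exponents this lemma is about.
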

\begin{proof}
Assume for a moment that there exists such $2$-form $F\in\mathcal F_{\mathbb Z}^p(\Omega)$. Then for any fixed $M>0$ we would have $|F|\geq M$ almost everywhere on $\Omega$. Indeed fix $\epsilon>0$ such that $|E_\epsilon|>\epsilon$, where $E_\epsilon:=\{|h|>\epsilon\}$. Then consider the sets $S(x,r):=\cup_{\rho\leq r}\psi_{B(x,r)}^{-1}(E_\epsilon)$, with the constraint $r<\sqrt{\epsilon/ M}$. These sets form a fine covering of $\Omega$, and if $h(\mathbb S^2)=(\psi_C^{-1})^*i^*_CF$ for almost all $C$ in the definition of $S(r,x)$, then $|F|$ must be larger than $M$ almost everywhere on $S(x,r)$. By extracting a (not necessarily disjoint) countable cover of $\Omega$ by sets $S(x,r)$ up to zero Lebesgue measure, we obtain that $|F|\geq M$ almost everywhere. By the arbitrariness of $M$ we obtain that $F$ cannot be in $L^p$, thus contradicting our assumption.
\end{proof}

\begin{rmk}\label{star}
Suppose that $\mathcal C$ is a family of cycles such that for almost all $x\in\Omega$ the tangent spaces $(T_x C)_{x\in C\in\mathcal C}$ span the Grassmannian $G(2,1)$ of $2$-planes. Then for any $k$ as in \eqref{slices} there is at most one $2$-form $F$ such that $k=k_F$. Indeed, fixing the restrictions $i^*_CF$ at some point $x$ along three linearly independent tangent planes relative to three choices of $C$, automatically fixes the value of $F$ at $x$.
\end{rmk}

The compatibility requirement between $k$ and $F$ following from Remark \ref{star} depends on the pointwise behavior of the single slices. We would like to find a more geometric condition $(C)$ which can be tested by looking only at the function $k$ as in \eqref{slices}. See Question \ref{star5} for such an example. The wanted condition $(C)$ should also satisfy the following properties.
\begin{definition}\label{compatdef}
 Suppose that $(C)$ is a property of a function $k$ as in \eqref{slices} for a given set of cycles $\mathcal C$. We say that $(C)$ is a \textbf{compatibility condition} if the following are true:
\begin{enumerate}
 \item If $ F \in\mathcal F_{\mathbb Z}^p(\Omega)$ for some $p\in]1,3/2[$ then the function $k_F $ which to a generic $C\in \mathcal C$ associates the slice of $F$ along $C$, satisfies $(C)$.
\item If $ F_i \in\mathcal F_{\mathbb Z}^p(\Omega)$ are a sequence converging $L^p$-weakly to a form $F$, then also $k_F$ satisfies $(C)$.
 \item Whenever $k$ satisfies $(C)$, there exists a $ F \in\mathcal F_{\mathbb Z}^p(\Omega)$ such that $k=k_ F $.
\end{enumerate}
\end{definition}
Since we ``know much more'' about $\mathcal F_{\mathbb Z}^p$ than about weak convergence or about slice functions, in general the first point above should prove relatively easier to check.

\begin{example}\label{tooweak}
In \cite{PR1}, a $(C)$ satisfying the first condition was given, and consisted in asking that for $\mathcal C=\{\partial B(x,r):\:x\in\mathbb R^3, r>0\}$ (where the generic sets are the algebra generated by the ones of the form $\{\partial B(x,r):\:r\in N\}$ s.t. $ \mathcal L^1(N)=0$), the integral of $k(C)$ be an integer. This is just the definition of $\mathcal F_{\mathbb Z}^p$. As shown by Lemma \ref{notcomp}, this candidate for condition $(C)$ is too weak to satisfy the second property above.
\end{example}

\subsubsection{A simple geometric candidate for compatibility} We consider still the case where $\mathcal C$ consists of all spheres contained in $\Omega$. If we imagine that our form $F$ has only finitely many singularities, then the integral $\int_C F$ along each cycle corresponds to an algebraic sum of the degrees associated to the singularities situated in the interior of $C$. Now consider two intersecting spheres, $C',C''$ and suppose that their intersection is a circle $D$. If we assume that none of the singularities of $F$ is on $C'\cup C''$, we will have then that near $D$ the forms $i^*_{C'}F,i^*_{C''}F$ can be represented respectively as $dA',dA''$, for suitable $1$-forms $A',A''$. It is easy to see by using Stokes' theorem that the difference $\int_DA''-\int_DA'$ must then be an integer, and must equal the algebraic sum of the degrees of all the singularities contained inside $C'\cap C''$. It is thus natural to formulate the following compatibility condition more in general:
\begin{equation*}
\begin{split}
(C^*):&\quad\forall x\text{ for a.e. circle }D\text{ with center }x, \int_DA'-\int_DA''\in\mathbb Z,\\
 & \text{where }i_{C'}^*k(C')=dA',i_{C''}^*k(C'')=dA''\text{ locally near }D.
\end{split}
\end{equation*}
It is easy to see that condition $(1)$ of Definition \ref{compatdef} is satisfied, while condition $(2)$ will probably be achievable using the techniques leading to the closure theorem \ref{closureweakbdl}. The third condition is however still to be investigated. We thus formulate the following
\begin{question}\label{star5}
 Is condition $(C^*)$ a  compatibility condition in the sense of Definition \ref{compatdef}?
\end{question}

\subsubsection{A more complex candidate for compatibility} Example \ref{tooweak} suggests considering a stronger form of condition $(C)$ than just the requirement that spherical slices have integer degree. In order to give a second candidate for a compatibility condition, we will now suggest how to extend the class $\mathcal C$ here, to include all boundaries of bounded sets writable as finite intersections of balls and of complements of balls (since we are interested in the boundaries, and just finite intersections are involved, it is not relevant whether we use closed or open balls). We will call such boundaries \textbf{convex spherical polyhedra}, in analogy  with the case when balls are replaced by half-spaces.

\subsubsection{Cell complex structure and genericity} Consider a convex spherical polyhedron (where $i\geq 1, j\geq 0$ and $B_i,B_j$ are balls included in $\Omega$):
$$
C=\partial\left[\left(\cap B_i\right)\cap\left(\cap \bar B_j^c\right)\right].
$$
A natural notion of genericity, which is also easily seen to be admissible according to Definition \ref{residual}, can defined as follows: if $B_i=B(x_i,r_i)$ then generic sets of perturbations of $C$ will be the ones formed by
$$
C'=\partial\left[\left(\cap B(x_i',r_i')\right)\cap\left(\cap \bar B(x_j',r_j')^c\right)\right],
$$
with $r_i'\in ]r_i(1-\epsilon),r_i(1+\epsilon)[\setminus N_i$, $\mathcal L^1(N_i)=0$, and with $x_i'\in B(x_i,\epsilon)\setminus A_i$, $\mathcal L^3(A_i)=0$ for all $i$ (and similarly for $j$).\\

It will also be useful to consider the natural induced cell complex structure on each cycle $C$; the dimensions of the different faces will agree with the Hausdorff dimension of the underlying sets. We will use as $2$-skeleton the sets $D_i$ such that $\mathring D_i$ are the connected components of the interior of $C\cap\partial B$ for some $B$ among the ones in the definition of $C$. The lower-dimensional skeletons can be then defined by intersection.\\
Given this cell complex structure, we define a spherical cell complex as in singular homology theory, the only differing feature being that all our $3$-cells are required to be obtained via intersections of (generic) balls. We thus also have a way of making sums and differences of our cycles. Given these data, it will be enough to define the compatibility condition on couples of convex spherical polyhedra having exactly one common face, then extend the definition by taking sums to more general cases.\\

\subsubsection{Strategy for another possible definition of compatibility}
We first define a compatibility property for two spherical polyhedra having just one common face of highest dimension, then we extend this to all polyhedra. The whole construction is done in the case of forms $F\in \mathcal F_{\mathbb Z}^p(\Omega)$ having finitely many singularities, which is a very special and easy case.\\

\textbf{Compatibility for neighboring cells.} Given $h^{C'}\in Y_{C'}, h^{C''}\in Y_{C''}$ where the $2$-skeletons $(K_{C'})^2, (K_{C''})^2$ have exactly one common face, we describe a candidate compatibility condition for $h^{C'}$ and $h^{C''}$ as follows.
\begin{itemize}
\item On the face $D\in(K_{C'})^2\cap (K_{C''})^2 $ we ask that $h^{C'}=h^{C''}$ a.e.
\item On the faces neighboring $D$ in $(K^{C'})^2$ the form $ h^{C'}$ can be expressed locally as $dA_{h^{C'}}$, and similarly $h^{C''}=_{loc}dA_{h^{C''}}$. We then ask also that $\in\mathbb Z$, where the orientations on $\partial D$ in the two integrals are coming from the orientations of $C', C''$ respectively.
\end{itemize}
\textbf{Definition of our candidate condition $(C')$.} We define the candidate property $(C')$ for a function $k$ as in \eqref{slices} in the case of a form $F\in\mathcal F_{\mathbb Z}^p(\Omega)$ which is smooth outside a finite number of singularities $a_1,\ldots,a_k\in\Omega$ and :
$$
(C'):\:\text{\emph{The above conditions hold with the choice} }h^C=k(C),\text{ \emph{for all} }C\in\mathcal C.
$$
For $F\in\mathcal F_{\mathbb Z}^p(\Omega)$ which has just isolated singularities, one can find local representations outside the singularities via potentials $A_{h^C}$ as above, and the integer $\int_{\partial D}A_{h^{C'}} - \int_{\partial D}A_{h^{C''}}$ is equal to the number of charges inside the sum cycle $C'+C''$. We formulate the future steps to be taken from here on, as an open question:
\begin{openprob}
 Is it possible to extend the definition of condition $(C')$ to the whole class $\mathcal F_{\mathbb Z}^p(\Omega)$, and if so, is the result of this extension a compatibility condition in the sense of Definition \ref{compatdef}?
\end{openprob}
\subsection{Regularity of critical points and of minimizers of the $L^p$-energy.}
We have already recalled in Proposition \ref{densitykessel} of the introduction that to any $F\in\mathcal F_{\mathbb Z}^p$ we can associate an integral rectifiable current of finite mass such that $\partial I=dF$ as distributions, and the map $dF\mapsto I$ is bounded with respect to the mass norms.\\
It is an intriguing direction of investigation to study how much of the information about $F$ is encoded in such an $I$, and to try and use instruments of the theory of currents to study the forms in $\mathcal F_{\mathbb Z}^p$. We will now consider the regularity question for minimizers of the $L^p$-energy in the class $\mathcal F_{\mathbb Z}^p$ as in \eqref{minpb}, or more generally for critical points
\begin{equation}\label{critpteq}
\begin{split}
 &F\in\mathcal F_{\mathbb Z,\phi}^p(\Omega): \int_\Omega \langle|F|^{p-2}F,X\rangle dx^3=0,\\
&\text{ for all }X\in\mathcal F_{\mathbb Z, 0}^p(\Omega)\text{ such that }dX=0\text{ in }\mathcal D'(\Omega).
\end{split}
\end{equation}
This equation is usually complemented by the stationarity requirement
\begin{equation}\label{station}
\begin{array}{l}
\text{For all continuously parameterized families }\Phi_t\in Lip(\Omega,\Omega), t\in[-\delta,\delta]\\
\text{such that }\forall t\:\Phi_t|_{\partial\Omega}=id_{\partial\Omega}\text{ and }\Phi_0=id_\Omega,\\
\text{there holds }\left.\frac{d}{dt}\int_\Omega|\Phi_t^*F\right|^pdx^3|_{t=0}=0.
\end{array}
\end{equation}
It is tempting to imitate the blow-up-and-monotonicity (or stationarity) approach in order to study the singularity points, and to prove regularity results, in the spirit of \cite{Simon}, \cite{Linblowup}. Besides the harmonic maps, another model problem for us is the regularity of minimal surfaces \cite{degiorgi}. The use of the fact that the current is boundaryless in the problem of mass-minimizing integral currents (in the case of harmonic maps, the studied quantity is a gradient, thus the property of being boundaryless is implicit there too), would correspond here to the use of the fact that $\langle \partial I,\chi_{B_r(x)}\rangle\in \mathbb Z$ is constant for a.e. $r>0$, which in our case is not necessarily true; so we must content ourselves of a weaker result, relying on the above Euler-Lagrange equations and their analogues, and trying to use the integrality of the boundary to reduce to the boundaryless case. One of the main steps in the study of our problem would be the description, and perhaps even the classification, of tangent maps, as is done in the case of minimizing hypersurfaces (where the tangent spaces are seen to be hyperplanes without much effort), and as was achieved (with more effort) in the case of harmonic maps for example in \cite{Breziscoronlieb}. The best-behaving local models of singularities in our case are up to rotation $2$-homogeneous and symmetric, but proving that they are the only possible tangent maps of critical forms $F$ is so far just a conjecture.\\
\begin{openprob}
 Suppose $F$ is a minimizer of the energy, in particular it satisfies \eqref{critpteq}, \eqref{station}, and a suitable comparison principle. Assume that the $L^p$-weak limit of a blow-up sequence $F_i(x)=r_i^2F\left(\frac{x-x_0}{r_i}\right)$, $(r_i\to 0)$ exists. Then prove that such limit equals, up to a rotation, one of the forms $\Phi_k(x)=k\frac{x}{|x|^3}, k\in\mathbb Z$. In particular the limit is unique.
\end{openprob}
 For minimizers the conjecture is true, as will be proved in some future work. Our proof however uses the results of \cite{P3}, i.e. the special properties available for minimizers. Note that the same kind of properties are used also in the result \cite{Breziscoronlieb} about harmonic maps, therefore the following question is meaningful also in that case. 
\begin{question}
 Suppose that a form $F\in\mathcal F_{\mathbb Z}^p(\Omega)$ satisfies \eqref{critpteq} and \eqref{station}. Is it possible to use just these two facts and obtain the uniqueness of tangent forms?
\end{question}
Supposing that the tangent maps are classified, we have still one more step to achieve until the local regularity becomes provable, at least if the classification is done in the sense of the above conjecture: we must use the information given by the tangent maps on degrees, in order to eliminate the possibility that singularities (i.e. points of the support of the above boundary $\partial I$ of the current of Proposition \ref{densitykessel}) are present in the regions of small rescaled energy: this would indeed be the analogue of an $\epsilon$-regularity theorem, in the setting of a classical approach to regularity (as opposed to the alternative combinatoric approach of \cite{P3}). We state the following open problem in this spirit:
\begin{question}[classical proof of $\epsilon$-regularity]\label{questionereg}
Find a proof of the $\epsilon$-regularity for minimizing forms $F$ as in the problem \eqref{minpb}, using just the weak equation \eqref{critpteq} and not the approximation result as in \cite{P3}.
\end{question}
We spend the rest of this subsection to give some hints encouraging the idea that the answer to the above question is positive. \\

The above problem can transformed into an abstract question involving only the more handy current $I$, rather than the mysterious form $F$. We will concentrate on the following property involving the boundary of a finite mass integral $1$-current $I$ (the idea will be to use this property in relation to the current $I$ of Proposition \ref{densitykessel}).
\begin{definition}\label{kirchheimprop}
Suppose that $I$ is an integer multiplicity rectifiable $1$-current of finite mass on $\Omega$, whose boundary is defined in the sense that the flux (or average of the slice done via the distance function, in the terminology of \cite{Federer})
\begin{equation}\label{eqbdryI}
\begin{array}{l}
\phi(B_r(x)):=\langle I,d\chi_{B_r(x)}\rangle=\langle I,\op{dist}(x,\cdot),r\rangle(1)\\
\text{ is well-defined and belongs to }\mathbb Z\\
\text{ for all }x,\text{ a.e. }r>0\text{ such that }B_r(x)\subset\Omega.
\end{array}
\end{equation}
We denote by $(\mathcal P)$ the property that for all $x\in\Omega$ there exist a strictly decreasing sequence $r_i^x\to 0$ such that for all $i$ $\phi(B(r_i^x,x))$ is well-defined and equal to zero.
\end{definition}
The result which connects this definition to the above program of solving the regularity question in the harder way, can be formulated as follows:
\begin{proposition}\label{propp}
Let $F\in\mathcal F_{\mathbb Z}^p(\Omega)$, $\Omega\Supset B_1(0)$, and let $I$ be the integral current as in the above Proposition \ref{densitykessel}. Suppose that $F$ minimizes the $L^p$-norm with constrained boundary trace, as in \eqref{minpb}.
There exists then a constant $\epsilon_0>0$ which is independent of $F, I, \Omega$ such that if
\begin{equation}\label{ereg}
\int_{B_1(0)}|F|^pdx^3\leq \epsilon_0,
\end{equation}
then on the smaller ball $B_{3/4}(0)$ the current $I$ has property $(\mathcal P)$.
\end{proposition}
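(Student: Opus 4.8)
The strategy is to reduce property $(\mathcal P)$ for $I$ to a statement about the singular set of the minimizer, and then to rule out genuinely charged singularities inside $B_{3/4}(0)$ by a universal energy lower bound; the quantitative step is where minimality of $F$ and the results of \cite{P3} enter.

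\emph{Reduction to the singular set.} By the results of \cite{P3}, a minimizer $F$ of \eqref{minpb} has only finitely many singular points $a_1,\dots,a_N$ in $\overline{B_{7/8}(0)}\Subset\Omega$; away from them $F$ is regular, so $\partial I=dF$ (Proposition \ref{densitykessel}) has absolutely continuous density there, and since $F\in\mathcal F_{\mathbb Z}^p$ forces $\int_{\partial B_\rho(x)}i^*F=\int_{B_\rho(x)}dF\in\mathbb Z$ for a.e.\ small $\rho$ while the right side tends to $0$, that density in fact vanishes. Hence $\partial I$ is carried by $\{a_1,\dots,a_N\}$ inside $B_{7/8}(0)$, and for every $x$ and every small $r$ one has $\phi(B_r(x))=\langle\partial I,\chi_{B_r(x)}\rangle=\sum_{a_j\in B_r(x)}d_j$, with $d_j:=\int_{\partial B_\rho(a_j)}i^*F\in\mathbb Z$ the (well-defined, constant for small $\rho$) local degree. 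Thus $(\mathcal P)$ holds automatically at every point that is not one of the $a_j$, it holds at $a_j$ if $d_j=0$, and it fails at $a_j$ exactly when $d_j\neq0$. Moreover, testing on $\chi_{B_r(0)}$ for a.e.\ $r\in(7/8,1)$ gives $\big|\sum_{a_j\in B_r(0)}d_j\big|=\big|\int_{\partial B_r(0)}i^*F\big|\le(4\pi r^2)^{1-1/p}\big(\int_{\partial B_r(0)}|F|^p\big)^{1/p}\le C\epsilon_0^{1/p}$, so for $\epsilon_0$ small this integer is $0$ and the configuration $\{(a_j,d_j)\}$ in $B_{7/8}(0)$ has zero total charge. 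It therefore suffices to show: if some $a_j\in B_{3/4}(0)$ has $d_j\neq0$ (hence there are also charges of opposite sign), then $\int_{B_1(0)}|F|^p$ is bounded below by a universal constant.

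\emph{Quantitative core.} Let $x_0:=a_j\in B_{3/4}(0)$ with $d_j\neq0$. The analytic input on slices is the spherical H\"older inequality: whenever $\phi(B_r(x_0))\neq0$,
\[
1\le\Big|\int_{\partial B_r(x_0)}i^*F\Big|\le(4\pi r^2)^{1-1/p}\Big(\int_{\partial B_r(x_0)}|i^*F|^p\Big)^{1/p},\qquad\text{hence}\qquad\int_{\partial B_r(x_0)}|F|^p\ \ge\ c_p\,r^{-2(p-1)}.
\]
Since $p<3/2$ we have $2(p-1)<1$, so $r^{-2(p-1)}$ is integrable at $0$; by the coarea formula, on any range $[\rho_1,\rho_2]\subset(0,1/4)$ with $\phi(B_r(x_0))\neq0$ for a.e.\ $r$ there,
\[
\int_{B_1(0)}|F|^p\ \ge\ \int_{B_{\rho_2}(x_0)\setminus B_{\rho_1}(x_0)}|F|^p\ \ge\ c_p\int_{\rho_1}^{\rho_2}r^{-2(p-1)}dr\ =\ \frac{c_p}{3-2p}\big(\rho_2^{3-2p}-\rho_1^{3-2p}\big),
\]
which is $\ge c_p'\,\rho_2^{3-2p}$ as soon as $\rho_2\ge2\rho_1$. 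So everything reduces to producing a \emph{macroscopic} nonzero-flux range around $x_0$: a universal $\delta_0>0$ with $\phi(B_r(x_0))\neq0$ for $r\in[\delta_0/2,\delta_0]\subset(0,1/4)$. Granting this, $\int_{B_1(0)}|F|^p\ge c_p'\,\delta_0^{3-2p}=:\epsilon_1>0$, and choosing $\epsilon_0<\epsilon_1$ (and below the threshold in \cite{P3}) contradicts \eqref{ereg}; together with the Reduction this shows $I$ has $(\mathcal P)$ on $B_{3/4}(0)$.

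\emph{The macroscopic range; the main obstacle.} This is where minimality is essential, via \cite{P3}. By the structure theory of \cite{P3}, $I$ may be taken to be, up to controlled error, the minimal connection spanning the charges $d_j$ together with the boundary datum, with mass in $B_1(0)$ controlled by $\|F\|_{L^1(B_1(0))}\le C\epsilon_0^{1/p}$. Since $d_j\neq0$, $I$ carries multiplicity $\ge1$ out of $\partial B_r(x_0)$ for every $r$ below the distance from $x_0$ to $\op{spt}\partial I\setminus\{x_0\}$, so that distance is $\le C\epsilon_0^{1/p}$; hence, for $\epsilon_0$ small, all singularities of $F$ near $x_0$ lie in a ball $B(x_0,\eta)$ with $\eta\le C\epsilon_0^{1/p}$, and we set $D:=\sum_{a_k\in B(x_0,\eta)}d_k$. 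If $D\neq0$ then $\phi(B_r(x_0))=D\neq0$ for all $r\in[2\eta,1/4]$ and we take $\delta_0=1/4$ in the Quantitative core. If $D=0$, the cluster is a small neutral configuration, and one invokes the fact — part of the analysis of \cite{P3}, and provable by a gluing argument — that a minimizer contains no such cluster at a fine scale: replacing $F$ on $B(x_0,C\eta)$ by a smooth \emph{exact} zero-flux filling of $i^*_{\partial B(x_0,C\eta)}F$ keeps $F\in\mathcal F_{\mathbb Z,\phi}^p$ and, because the far field of a neutral cluster of size $\le\eta$ carries on $\partial B(x_0,C\eta)$ only $L^p$-energy $O(C^{2-3p}\eta^{2-2p})$ — hence a filling of energy $O(C^{3-3p}\eta^{3-2p})$ — against the cluster's own $\gtrsim\eta^{3-2p}$, it \emph{strictly} lowers the $L^p$-energy once $C$ is large (as $3-3p<0$), contradicting minimality. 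The delicate point — and the reason the argument cannot be run inside the class $\mathcal F_{\mathbb Z}^p$ alone, which by Lemma \ref{notcomp} and Remark \ref{distancefig} is too flexible — is precisely this non-screening statement in the general multi-charge case; obtaining it from the weak Euler--Lagrange equation \eqref{critpteq} and the stationarity \eqref{station} alone, rather than from \cite{P3}, is the substance of Question \ref{questionereg}.
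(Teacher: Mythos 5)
There is a genuine gap, and it sits exactly where you flag it: the ``macroscopic range'' step. Your argument needs a \emph{universal} scale $\delta_0$ on which the flux around the bad point is a nonzero integer, and to get it you must rule out screening of a charge by a small neutral cluster. The comparison argument you sketch for the case $D=0$ is not a proof: the lower bound ``the cluster's own energy is $\gtrsim\eta^{3-2p}$'' does not follow from anything you have established (the mutual distances of the charges inside the cluster can be much smaller than $\eta$, and the Jensen bound only produces energy on the nonzero-flux annuli, whose width you do not control), the $O(C^{2-3p}\eta^{2-2p})$ bound for the trace of the far field on a \emph{specific} sphere $\partial B(x_0,C\eta)$ needs at least an averaging/good-radius selection, and the claim that such a clearing-out statement is ``part of the analysis of \cite{P3}'' is not something the paper supports — the paper cites \cite{P3} only for the locally finite singular set and for the monotonicity formula, and it explicitly presents this kind of $\epsilon$-regularity-type step as the hard open issue (Question \ref{questionereg}). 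A secondary slip: the mass bound of Proposition \ref{densitykessel} is global, $\mathbb M(I)\le C\norm{F}_{L^1(\Omega)}$, so localizing it to $B_1(0)$ to bound the distance from $x_0$ to the rest of $\op{spt}\partial I$ by $C\epsilon_0^{1/p}$ is not justified as stated.

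The paper's proof avoids all of this and is much shorter: since a minimizer is stationary, the monotonicity formula \eqref{eq:monotonicityend} of \cite{P3} makes the rescaled energy $E_r(F,B_r(x))=r^{2p-3}\int_{B_r(x)}|F|^p$ nondecreasing in $r$, so the hypothesis \eqref{ereg} gives $E_r(F,B_r(x))\le C\epsilon_0$ for \emph{every} $x\in B_{3/4}(0)$ and every $r$ with $B_r(x)\subset B_1(0)$. If $(\mathcal P)$ fails at $x$, then for a.e.\ $\rho\in\,]0,r(x)[$ the flux is a nonzero integer, and your own Jensen estimate $\int_{\partial B_\rho(x)}|F|^p\ge c_p\rho^{2-2p}$ integrated over $\rho\in(0,r)$ yields $E_r(F,B_r(x))\ge c>0$ with $c$ depending only on $p$ — no matter how small $r(x)$ is, because the energy being compared is scale invariant. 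Choosing $\epsilon_0$ below $c/C$ gives the contradiction. In other words, the monotonicity formula is the ingredient that lets one work at the (possibly tiny) scale $r(x)$ directly, so no macroscopic nonzero-flux range, no structure of the singular set, and no non-screening statement are needed. I recommend replacing your third step by this monotonicity argument; your ``Reduction'' and the neutral-cluster discussion can then be dropped entirely.
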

\begin{rmk}
Since the radius is $1$ and since we do not impose constraints on the size of $\Omega$, the above energy \eqref{ereg} on $B_1(0)$ can be considered to correspond to the correct scale-invariant version of energy, which in general has the form $r^{-\alpha}\int_{B_r(x)}|F|^pdx$. In our case (see \cite{P3}) the correct choice is $\alpha=3-2p$. 
\end{rmk}
\begin{proof}[Proof of Proposition \ref{propp}:]
We will use the following result:
\begin{itemize}
 \item \textbf{Monotonicity formula (\cite{P3}, Proposition 5.2):} if $F\in\mathcal F_{\mathbb Z}^p(\Omega)$ is stationary (i.e. it satisfies \eqref{station}) then for $B_r(x)\subset\Omega$ we have
\begin{equation}\label{eq:monotonicityend}
 \frac{d}{dr}\left(r^{2p-3}\int_{B_r}|F|^pdy\right)=2p\;r^{2p-3}\int_{\partial B_r}|F|^{p-2}|\partial_\rho\lrcorner F|^2d\sigma
\end{equation}
where $\partial_\rho=\frac{\partial}{\partial\rho}$ is the radial derivative in $B_r(x)$.
\end{itemize}
By integrating the above formula \eqref{eq:monotonicityend} we obtain that the rescaled energy $E_r(F,B_r(x))$ is increasing in $r$, where we use the notation $E_a(F,B)=a^{2p-3}\int_B |F|^pdx$. We now use the small energy assumption on $B_1(0)$ in order to obtain a rescaled energy bound on balls $B_r(x)\subset B_1(0)$, with $x\in B_{3/4}(0)$: 
\begin{eqnarray*}
 \epsilon_0&\geq& E_1(F,B_1(0))\geq C E_{\bar r}(F, B_{\bar r}(x))\\
&\geq& C E_r(F,B_r(x))\quad\forall r\text{ s.t. }B_r(x)\subset B_1(0),
\end{eqnarray*}
where $\bar r=\op{dist}(x, \Omega\setminus B_1(0))$. Now suppose that $(\mathcal P)$ were false; we then have that for some $x$ there exists $r(x)>0$ such that for almost all $r\in]0,r(x)[$ there holds $\left|\int_{\partial B_r(x)}i^*F\right|\geq 1$, which implies
\begin{eqnarray*}
E_r(F,B_r(x))&\geq& r^{2p-3}\int_0^r\int_{x+r\omega\in\partial B_r(x)}|i^*F(x+r\omega)|^pr^2d\omega dr\\
&\geq& Cr^{2p-3}\int_0^rr^{2-2p}dr=C. 
\end{eqnarray*}
We have used just the inequality $|F|\geq|i^*F|$ and the Jensen inequality on each $\partial B_r(x)$. The constants so far depend just on $p$ and on the dimension, so we can find $\epsilon_0>0$ independent of $F,I,\Omega$ such that the two above inequalities are incompatible. Thus the proof is finished. 
\end{proof}
We now describe a positive result suggesting that from Proposition \ref{propp} it could be possible to obtain a positive answer to the Question \ref{questionereg} above. More precisely, we show that a stronger version of property $(\mathcal P)$ of Definition \ref{kirchheimprop} is enough to obtain regularity. This result was suggested to us by Bernd Kirchheim, but it will be evident that the proof does not extend to include assumptions that are as weak as in property $(\mathcal P)$.
\begin{lemma}\label{kirchlemma}
Let $I$ be an integral flat $1$-current of finite mass such that property $(\mathcal P)$ holds, and the function $r(x)$ of Definition \ref{kirchheimprop} satisfies a uniform lower bound $r(x)>c$. Then $\partial I=0$.
\end{lemma}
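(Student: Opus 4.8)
The plan is, first, to fix the reading of the strengthened hypothesis: I take ``$r(x)>c$ uniformly'' to mean that there is a constant $c>0$ such that, for every $x\in\Omega$, the flux $\phi(B_r(x))$ is well defined and equal to $0$ for $\mathcal L^1$-almost every $r\in(0,c)$. This is genuinely stronger than $(\mathcal P)$, and — as the remark after Proposition~\ref{propp} already warns — it is precisely at the point where this uniformity is used that the argument fails to survive weakening back to the sequence version of $(\mathcal P)$. Granting this, observe that $\partial I$ is a distribution of order at most $1$ on $\Omega$, since $|\langle\partial I,f\rangle|=|\langle I,df\rangle|\le\mathbb M(I)\,\|df\|_\infty$ for $f\in C^\infty_c(\Omega)$; in particular it is continuous with respect to $C^1$ convergence of test functions supported in a fixed compact set, and the goal reduces to proving $\langle\partial I,\psi\rangle=0$ for every $\psi\in C^\infty_c(\Omega)$.

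The first step, which I expect to carry essentially all of the (modest) difficulty, is to upgrade the pointwise-in-$x$ hypothesis to one that is uniform over centers. By the coarea/slicing theory for the Lipschitz functions $y\mapsto|y-x|$, the flux $\phi(B_r(x))=\langle I,d\chi_{B_r(x)}\rangle=\langle I,\op{dist}(x,\cdot),r\rangle(1)$ is defined for a.e.\ $r$, depends jointly measurably on $(x,r)$, and satisfies $\int_0^R|\phi(B_r(x))|\,dr\le\mathbb M(I)$. Since for each $x$ the set of bad radii inside $(0,c)\subseteq(0,r(x))$ is $\mathcal L^1$-null, Fubini's theorem on $\Omega\times(0,c)$ gives: for $\mathcal L^1$-a.e.\ $\delta\in(0,c)$ one has $\phi(B_\delta(x))=0$ for $\mathcal L^3$-a.e.\ $x\in\Omega$. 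Call such $\delta$ \emph{good}; the good radii are dense in $(0,c)$, so in particular there are good $\delta\to0^+$. It is exactly here that the uniform lower bound matters: the pointwise exceptional sets, each null in its own interval, assemble into a single null set in the $(x,r)$-plane only because the intervals all contain $(0,c)$; with merely a null sequence $r_i^x\to0$ of zero-flux radii for each $x$, the set of centers for which a fixed small $\delta$ is admissible may be empty, and this step has no substitute.

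Second, I would average against a test function. Fix $\psi\in C^\infty_c(\Omega)$; after shrinking $c$ it suffices to treat $\psi$ with $\op{spt}\psi+\overline{B_c}\Subset\Omega$, since such $\psi$ exhaust $\Omega$. For a good $\delta<c$, integrating the identity $\langle\partial I,\chi_{B_\delta(x)}\rangle=\phi(B_\delta(x))$ against $\psi(x)\,dx$ and interchanging the $x$-integration with the finite measure $\|I\|$ (after a harmless mollification of the indicators to legitimize the current pairings) yields
$$\Big\langle\partial I,\ \psi*\chi_{B_\delta}\Big\rangle=\int_\Omega\psi(x)\,\phi(B_\delta(x))\,dx=0,$$
where $(\psi*\chi_{B_\delta})(y)=\int_{B_\delta(y)}\psi$ lies in $C^\infty_c(\Omega)$ and the last equality is the conclusion of the first step.

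Finally, I would let $\delta\to0^+$ through good radii. The normalized averages $\tfrac{1}{|B_\delta|}\,\psi*\chi_{B_\delta}$ converge to $\psi$ in $C^k$ for every $k$, uniformly on the fixed compact set $\op{spt}\psi+\overline{B_c}$, because $\partial^\alpha(\psi*\chi_{B_\delta})=(\partial^\alpha\psi)*\chi_{B_\delta}$ and $\psi$ is uniformly continuous together with all its derivatives. By the continuity of $\partial I$ noted above, $\langle\partial I,\psi\rangle=\lim_{\delta\to0^+}\big\langle\partial I,\ \tfrac{1}{|B_\delta|}\,\psi*\chi_{B_\delta}\big\rangle=0$, and since $\psi$ was arbitrary, $\partial I=0$. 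Alternatively, and closer in spirit to Kirchheim's suggestion: taking a radially decreasing mollifier $\rho_\epsilon$ supported in $B_\epsilon$ with $\epsilon<c$ and writing $\rho_\epsilon$ as a superposition of indicator functions of concentric balls of radius $<\epsilon$, the first step forces $(\partial I)*\rho_\epsilon\equiv0$ (it vanishes a.e.\ and is smooth), whence $\partial I=0$ on letting $\epsilon\to0$. In either route the sole delicate point is the uniformization step; the remainder is standard mollification.
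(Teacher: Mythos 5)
Your argument is correct, and your reading of the strengthened hypothesis (flux well defined and zero for a.e.\ $r\in(0,c)$, for every center) is exactly what the paper's own proof uses; but your main route is genuinely different from the one in the paper. The paper never integrates over centers: it first kills $\langle\partial I,\psi\rangle$ for radial test functions $\psi=f(|\cdot-p|)$ supported in $B_c(p)$, via the single-center layer-cake/slicing identity $\langle I, f'(u_p)\,du_p\rangle=\int f'(r)\,\phi(B_r(p))\,dr$ with $u_p=\op{dist}(p,\cdot)$; it then observes that $\chi_{B}*\rho_\epsilon$ is radial about the center of $B$, approximates an arbitrary test function in $L^1$ by simple functions $\sum_i a_i\chi_{B_i}$ with radii at most $c/2$, and passes to the limit first in $N$ (in $C^1$, at fixed $\epsilon$) and then in $\epsilon$ (in $C^0$ for the differentials). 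You instead run a Fubini argument in $(x,r)$ to produce a single good radius $\delta$ with $\phi(B_\delta(x))=0$ for a.e.\ $x$, convolve the test function with $\chi_{B_\delta}/|B_\delta|$, and let $\delta\to0$; this buys a one-parameter limit and makes transparent exactly where the uniform bound $r(x)>c$ is indispensable (the assembly of the per-center null sets into one product null set), at the price of needing joint measurability of $(x,r)\mapsto\phi(B_r(x))$ (standard, but worth a line) and of the interchange you label ``harmless mollification''. Be aware that legitimizing that interchange is precisely where the single-center slicing identity above must enter: replace $\chi_{B_\delta(x)}$ by $\chi_{B_\delta(x)}*\rho_\eta=f_\eta(|\cdot-x|)$, use the identity to express $\langle\partial I,\chi_{B_\delta(x)}*\rho_\eta\rangle$ as an average of $\phi(B_r(x))$ over $r\in[\delta-\eta,\delta+\eta]\subset(0,c)$, and use that a.e.\ such $r$ is good; so both proofs ultimately rest on the same radial/layer-cake fact, and your closing alternative --- showing $(\partial I)*\rho_\epsilon\equiv 0$ by decomposing the radial mollifier into indicators of concentric balls and letting $\epsilon\to0$ --- is essentially the paper's argument in streamlined form. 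With that mollification step written out, your proof is complete.
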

\begin{proof}The fact that 
$\langle I,d\phi\rangle=0$ for all test functions $\phi$ supported on some ball of radius $c$, follows easily for $\phi$ of the form $\phi(x)=f(|x-p|)$, because their superlevelsets are small enough balls.\\

We want now to prove that when $\phi\in C^1_c$ is a general test function, $\langle\partial I,\phi\rangle=0$. To this end we start with a function $\phi_N=\sum_{i=1}^Na_i\chi_{B_i}$ where the radii of the $B_i$ are at most $c/2$, and take a family of radial mollifiers $\rho_\epsilon,\epsilon>0$, supported in balls of radius $c/2$ (this can be achieved for $\epsilon>0$ small enough). Then since $\eta_{\epsilon,i}:=\chi_{B_i}*\rho_\epsilon$ is radial and compactly supported in a ball of radius at most $c$, we obtain
$$
0=\sum_{i=1}^Na_i\langle\partial I, \eta_{\epsilon, i}\rangle=\langle \partial I,\phi_N*\rho_\epsilon\rangle.
$$
Now we claim that $\phi_N\to\phi$ in $L^1$ implies $\phi_N*\rho_\epsilon\to\phi *\rho_\epsilon$ in $C^1$-norm. Indeed,
$$
\partial_i\int\psi(x-y)\rho_\epsilon(y)dy=\pm\int \psi(z)\partial_i\rho_\epsilon(x-z)dz,
$$
and for $\psi=\phi_N-\phi$ we can estimate the absolute value of the above integral by 
$$
||\partial_i\rho_\epsilon||_{L^\infty}||\phi_N - \phi||_{L^1},
$$
which converges to zero as $N\to\infty$.\\

Similarly we can prove $d(\phi *\rho_\epsilon)\stackrel{C^0}{\to}d\phi$: we can estimate 
$$
\left|\int\partial_i\left[\phi(x-y) - \phi(x)\right]\rho_\epsilon(y)dy\right|\leq\omega(\epsilon)||\rho_\epsilon||_{L^1}\to 0,
$$
where $\omega(t)$ is the modulus of continuity of $d\phi$. Now testing all the above convergences with $I$, we obtain the wanted 
$$
\langle I,d\phi\rangle=0.
$$
\end{proof}
 Observe that for a doubling locally finite measure, to be zero on a fine covering (as are the balls in the definition of property $(\mathcal P)$) is equivalent to being zero, by the Vitali covering theorem (see Chapter 2 of \cite{Federer}). Thus if we knew for example that $\partial I$ were a measure, we would easily conclude. On the other hand, in our case $\partial I$ being a locally finite measure is equivalent to $F$ having just a locally finite number of singular points, which is the statement of the partial regularity \cite{P3}; we thus prefer not to use it as an assumption. We formulate instead the following intriguing abstract question:
\begin{openprob}
 Suppose $I$ is an integer rectifiable current of finite mass. Does property $(\mathcal P)$ imply the fact that $\partial I=0$?
\end{openprob}
In order not to mislead the reader, we observe that currents $I$ as in Proposition \ref{densitykessel} are \emph{more than just integer rectifiable of finite mass} in (our) case $p>1$, since they automatically conserve also some of the information on the higher integrability of $F$. This is the spirit of the following easy counterexample (the idea is the same as for Example 7.1 of \cite{P1}).
\begin{lemma}
 It is possible to find a rectifiable integer $1$-current $J$ of finite mass satisfying the following properties:
\begin{itemize}
 \item The slice $\langle J, d(\cdot,x),r\rangle$ exists and gives an integer when tested with the constant $1$ for all $x$ and all but at most countably many $r$.
 \item For no $F\in\mathcal F_{\mathbb Z}^p(\Omega)$ with $p>1$ is the current $I$ given by Proposition \ref{densitykessel} equal to $J$.
\end{itemize}
\end{lemma}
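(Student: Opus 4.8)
The plan is to take for $J$ a countable family of collinear radial segments clustering at one point, carrying integer multiplicities that grow almost as fast as the reciprocals of the lengths --- fast enough to destroy $L^p$-integrability for every $p>1$, yet slowly enough that the total mass stays finite. After a translation and dilation assume $\bar B_1(0)\subset\Omega$, put $e=(1,0,0)$, let $J_k$ be the $1$-current carried by the segment $[2^{-k-1}e,\,2^{-k}e]$ oriented outwards, and set
\[
J:=\sum_{k\ge1}m_k\,J_k,\qquad m_k:=\big\lceil 2^k k^{-2}\big\rceil .
\]
Since $J_k$ has length $2^{-k-1}$ and $\mathbb M(J)=\sum_k m_k2^{-k-1}\le C\sum_k k^{-2}<\infty$, this is a rectifiable integer $1$-current of finite mass supported on $[0,e/2]$. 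Because the inner endpoint of $J_{k-1}$ coincides with the outer endpoint of $J_k$, the boundary $\partial J$ --- which acts on $\phi\in C_c^\infty(\Omega)$ by the absolutely convergent series $\langle\partial J,\phi\rangle=\langle J,d\phi\rangle=\sum_k m_k\big(\phi(2^{-k}e)-\phi(2^{-k-1}e)\big)$, convergence being clear from $|\phi(2^{-k}e)-\phi(2^{-k-1}e)|\le\|d\phi\|_\infty 2^{-k-1}$ --- carries at each point $2^{-k}e$ the net integer charge $c_k:=m_k-m_{k-1}$ (with $c_1:=m_1$); one has $c_k\asymp 2^k k^{-2}$, so these atomic charges are \emph{unbounded} and $\partial J$ is not a Radon measure (its variation near $0$ would be at least $\sum_k|c_k|=\infty$). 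This unboundedness is exactly the pathology no current associated by Proposition \ref{densitykessel} to an $L^p$-form with $p>1$ can have, and it is what will produce the contradiction.

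First I would check the slicing/integrality property. For fixed $x$ and a.e.\ $r$, $J\llcorner B_r(x)$ is a compactly supported finite-mass $1$-current, so
\[
\langle J,d(\cdot,x),r\rangle(1)=-\langle\partial J,\chi_{B_r(x)}\rangle=-\sum_k m_k\big(\chi_{B_r(x)}(2^{-k}e)-\chi_{B_r(x)}(2^{-k-1}e)\big).
\]
For a.e.\ $r$ the sphere $\partial B_r(x)$ avoids all the points $2^{-j}e$, and only finitely many $k$ can have the two endpoints of $J_k$ on opposite sides of $\partial B_r(x)$ --- away from a neighbourhood of $0$ the segments are locally finite, while near $0$ they are nested, so once such a neighbourhood lies entirely inside or entirely outside $B_r(x)$ there is no further straddling. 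Hence the sum is a finite signed sum of the integers $m_k$, and $\langle J,d(\cdot,x),r\rangle(1)\in\mathbb Z$ for all $x$ and all but countably many $r$ (the exceptional $r$ being those for which $\partial B_r(x)$ is tangent to the axis or passes through some $2^{-k}e$).

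For the second property I argue by contradiction. Suppose some $F\in\mathcal F_{\mathbb Z}^p(\Omega)$ with $p>1$ had $I=J$ as in Proposition \ref{densitykessel}; then $\int d\phi\wedge F=\langle\phi,\partial J\rangle$ for all $\phi\in C_c^\infty(\Omega)$, i.e.\ $dF=\pm\partial J$ distributionally, and by Remark \ref{vfif} the associated $L^p$-vectorfield $X$ has distributional divergence $\pm\partial J$. Fix $k$, put $\delta_k:=2^{-k-3}$ (smaller than the distance from $2^{-k}e$ to the nearest other endpoint $2^{-k\pm1}e$), and test $dF=\pm\partial J$ against smooth truncations of $\chi_{B_\rho(2^{-k}e)}$: using the formula above for $\langle J,d\phi\rangle$ one obtains, for a.e.\ $\rho\in(0,\delta_k)$, the flux identity $\int_{\partial B_\rho(2^{-k}e)}i^*F=\pm c_k$, i.e.\ $\int_{\partial B_\rho(2^{-k}e)}X\cdot\nu\,d\mathcal H^2=\pm c_k$. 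By H\"older's inequality ($p>1$), $\int_{\partial B_\rho(2^{-k}e)}|X|^p\ge|c_k|^p(4\pi\rho^2)^{1-p}$, so by the coarea formula $\int_{B_{\delta_k}(2^{-k}e)}|X|^p\ge\frac{(4\pi)^{1-p}}{3-2p}|c_k|^p\delta_k^{3-2p}$ for $1<p<3/2$ (the left side is $+\infty$ outright when $p\ge3/2$). The balls $B_{\delta_k}(2^{-k}e)$ are pairwise disjoint (for consecutive $k$, $\delta_k+\delta_{k+1}<2^{-k-1}$, the others are farther apart), and $c_k\asymp 2^k k^{-2}$, $\delta_k\asymp 2^{-k}$, whence
\[
\|X\|_{L^p(\Omega)}^p\;\gtrsim\;\sum_{k\ge1}|c_k|^p\delta_k^{3-2p}\;\gtrsim\;\sum_{k\ge1}2^{3k(p-1)}k^{-2p}\;=\;+\infty\qquad\text{for every }p>1,
\]
contradicting $F\in\mathcal F_{\mathbb Z}^p(\Omega)\subset L^p$. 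The exponent $3k(p-1)$ degenerates at $p=1$, where the last series converges, which is exactly why this one $J$ is consistent with an $L^1$ datum and with no $L^p$ datum, $p>1$ --- the higher-integrability phenomenon recalled just before the statement.

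The hard part will be the transfer, in the previous paragraph, from the purely distributional identity $dF=\pm\partial J$ to the a.e.-sphere flux identity $\int_{\partial B_\rho(2^{-k}e)}i^*F=\pm c_k$: one must make sense of pairing the order-one distribution $\partial J$ (not a measure) with truncations of $\chi_{B_\rho(2^{-k}e)}$ for a.e.\ small $\rho$, and check that the result is exactly the enclosed net charge $c_k$. The construction is engineered so that this is elementary: keeping the charges collinear and dyadically separated forces $\partial J$ to reduce, on the whole ball $B_{\delta_k}(2^{-k}e)$, to the single point mass $\pm c_k\delta_{2^{-k}e}$, so the truncation computation is immediate and no divergence-measure-field machinery is needed. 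The residual points --- choosing $\delta_k$ a fixed fraction of the inter-charge spacing so the balls are disjoint, and verifying $c_k\ge 2^{k-2}k^{-2}$ for large $k$ (which follows from $m_{k-1}/m_k\to1/2$) --- are routine.
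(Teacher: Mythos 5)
Your argument is correct, and its engine is the one the paper uses: build a finite-mass integer $1$-current whose boundary charges, through the integer-flux plus H\"older/Jensen bound $\int_{\partial B_\rho}|i^*F|^p\gtrsim|\mathrm{flux}|^p\rho^{2-2p}$ integrated over a disjoint family of balls, force $\int|F|^p=\infty$, contradicting $F\in L^p$. The construction, however, is genuinely different. The paper keeps multiplicity one: it places countably many unit dipoles (a segment of length $a_i$ inside a ball of diameter $2a_i$, the balls disjoint and aligned along an axis) and tunes the lengths so that $\sum_i a_i<\infty$ (finite mass) while $\sum_i a_i^{3-2p}=\infty$ (energy divergence). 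You instead fix the dyadic geometry and let the multiplicities telescope, so the boundary consists of unbounded integer atoms $c_k\asymp 2^kk^{-2}$ at the points $2^{-k}e$. What your route buys: a single explicit $J$ excluding every $p>1$ simultaneously, since $\sum_k|c_k|^p\delta_k^{3-2p}\gtrsim\sum_k2^{3k(p-1)}k^{-2p}=\infty$ for all $p>1$, whereas the paper's choice of $(a_i)$ as written depends on $p$ and one must observe (it is left implicit) that a choice like $a_i\sim(i\log^2i)^{-1}$ makes $\sum_ia_i^{3-2p}$ diverge for every $p\in\,]1,3/2[$ at once; you also exhibit the stronger pathology that $\partial J$ is not even locally a measure. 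What the paper's route buys: unit charges and genuinely disjoint dipoles, so the integrality of spherical slices and the localisation of the boundary near each charge are immediate, with no need to discuss pairing a non-measure boundary with cutoffs; you handle that correctly by noting that on $B_{\delta_k}(2^{-k}e)$ the boundary reduces to the single atom $c_k\delta_{2^{-k}e}$ and by obtaining the flux identity for a.e.\ $\rho$ from radial cutoffs at Lebesgue points of $\rho\mapsto\int_{\partial B_\rho}i^*F$. Both the finite-mass check and the countable exceptional set of radii in your first bullet are fine.
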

\begin{proof}
 We may suppose up to rescaling that $\Omega\Subset Q$ where $Q$ is a square of sidelenght $2$ (this is more than needed for the rest of the proof, but makes the notations easier). Consider a sequence of positive numbers $a_i$ such that $\sum_ia_i=1, \sum_i a_i^{3-2p}=\infty$, then align a sequence of disjoint balls $B_i$ such that $\op{diam}B_i=2a_i$, along one of the axes of $Q$. Inside each $B_i$ two disjoint spheres $S_i^\pm$ of diameter $a_i$ can be packed; then identify the oriented segment joining their centers with an integer $1$-current, and define $I$ as the sum of all these currents. The $I$ has mass $1$, but (by applying Jensen's inequality on smaller spheres concentric to the $S_i^\pm$) it is evident that $F$ as in the statement of the Lemma must have $\int_Q|F|^pdx\geq C\sum_ia_i^{3-2p}=\infty$. This concludes the proof.
\end{proof}

\bibliographystyle{amsalpha}

\end{document}